\documentclass[reqno]{amsart}
\usepackage{amsmath, amssymb, amsthm, amsfonts, amstext}
\usepackage{xcolor}
\usepackage[normalem]{ulem}
\usepackage[colorlinks=true, citecolor=red, linkcolor=blue]{hyperref}
\usepackage[T1]{fontenc}
\usepackage[utf8]{inputenc}
\usepackage[all]{xy}
\usepackage{tikz-cd}
\usepackage{enumerate,stmaryrd}
\usepackage{pdfpages}
\usepackage{appendix}
\usepackage{subfiles}
\usepackage{multirow}
\theoremstyle{plain}
\newtheorem{theorem}{\sc Theorem}[section]

\newtheorem{lemma}[theorem]{\sc Lemma}

\newtheorem{proposition}[theorem]{\sc Proposition}
\newtheorem{corollary}[theorem]{\sc Corollary}

\theoremstyle{definition}
\newtheorem{definition}[theorem]{\sc Definition}
\newtheorem{remark}[theorem]{\sc Remark}

\newtheorem*{thmA}{Theorem A}
\newtheorem*{thmB}{Theorem B}
\newtheorem*{thmC}{Theorem C}
\newtheorem*{thmD}{Theorem D}

\numberwithin{equation}{section}
\newcommand{\ap}{{a^{\psi}}}
\newcommand{\app}{{a^{\psi_2}}}
\newcommand{\bp}{{b^{\psi}}}
\newcommand{\bpp}{{b^{\psi_2}}}
\newcommand{\Id}{\operatorname{I}}
\title{On weak commutativity between $n$ isomorphic groups, $n \geq 2$}
\author[Sidki]{S.\, N. Sidki } 
\address{Departamento de Matemática - IE, Universidade de Brasília, Brasília - DF, Brazil }
\email{ssidki@gmail.com}
\author[Rocco]{N.\, R. Rocco }
\address{Departamento de Matemática - IE, Universidade de Brasília, Brasília - DF, Brazil }
\email{norai.rocco@gmail.com}
\author[Oliveira]{R. N. de Oliveira}
\address{Instituto de Matemática e Estatística, Universidade Federal de Goiás - Goiânia - GO, Brazil  }
\email{ricardo@ufg.br}
\subjclass[2020]{Primary 20F05; Secondary 20F12, 20F14, 20D15, 20E22}
\keywords{Weak commutativity; Weak Commutativity Group; iterated operators; nilpotent groups; perfect groups; finiteness conditions; diagonal representations; Hamming codes}
\dedicatory{With an appendix by Dmytro Savchuk}
\begin{document}
\begin{abstract}
The Weak Commutativity Group was defined by the first author in the early 1980's as
\[ \chi(H) = \langle H, H^\psi \mid [h,h^\psi] = 1, \ \forall h \in H \rangle \]
where $H$ and $H^\psi$ are isomorphic groups under an isomorphism $\psi$, and proved that $\chi(-)$, seen as a group operator, preserved the classes of solvable groups, finite groups, finite $p$-groups, and perfect groups.  The list has grown considerably since then (see references). 

It is clear that the iterated operator $\chi^n (-)$ also preserves the same classes. However, we observe that for $n \geq 2$ many of the $2^n$ $H$-generating copies of $\chi^n (H)$ fail to weakly commute among themselves. 

We introduce in this paper the ``$n$-ary Weak Commutativity Group'' generated by $n$ copies of $H$ which weakly commute among themselves; that is
\[ \chi_n (H) = \langle H, H^\psi, \dots, H^{\psi_{n-1}} \mid [h^{\psi_i}, h^{\psi_j}] = 1, \ \forall h \in H, \ 0 \leq i < j \leq n-1 \rangle \]
(by definition, $H^{\psi_0} = H, H^{\psi_1} = H^\psi$).

We study quotients of $\chi^n (H)$ which are also simultaneously quotients of $\chi_{2^n} (H)$; we refer to them as hybrid $\chi^n (H)$ - $\chi_{2^n} (H)$ groups.

For $n=2$ we determine the largest common quotient $Q(H)$ of $\chi^2(H)$ and $\chi_4(H)$, that is, the largest hybrid $\chi^2(H)$--$\chi_4(H)$ group.

A related class of groups in our study is $\chi(n,H)$, a quotient of $\chi_n(H)$, previously introduced by the third named author in his 2007 doctoral thesis.  We prove that this class of groups continues to preserve finiteness, nilpotency and perfectness, and we provide detailed information about their structure.
In contrast, $\chi_n(H)$ behaves differently: $\chi_3(C_2 \times C_2)$ is infinite, while $\chi_3(C_{p^t} \times C_{p^u})$ is finite for $p$ odd.
\end{abstract}
\maketitle
\section{Introduction}

In this article, we use standard notation of group theory (see, for instance, \cite{Robinson}). In particular, for elements $x, y$ of a group $G$ we write $x^y$ for the conjugate $y^{-1}xy$,  $[x, y]$ for the commutator $x^{-1}y^{-1} x y$, and our commutators are left normed, $[x, y, z] := [[x, y], z].$ If $\alpha: G \to H$ is any group homomorphism, the image of $x \in G$ by $\alpha$ is written $x^\alpha$, except in some situations to avoid notational overload; if $\alpha$ is an endomorphism of $G$, we write $[g, \alpha]$ for the element $g^{-1}g^\alpha.$ Consequently, if $Y \subseteq G$ is any subset then $[Y, \alpha]$ denotes the subgroup of $G$ generated by all elements $[y, \alpha]$ with $y \in Y.$ 

For the sake of reference, we recall that $\chi$  is defined as an operator for any given group $H$ by
\begin{equation} \label{eq:wp}
\chi(H):= \langle H, H^\psi \;\mid\; [h, h^\psi] = 1, \ \forall h \in H \rangle, 
\end{equation}
where $H^\psi$ denotes an isomorphic copy of $H$ via the isomorphism $\psi: H \to H^\psi$, $h \mapsto h^\psi$ for all $h \in H$. For further developments on $\chi(H)$ and related constructions, see \cite{BD,EK,LO,BL-SS,os-cfg,rocco1,rocco-crnts,rocco-pcefsg}. 

Now, for a given group $H$, consider $n$ isomorphic copies of $H$ through the isomorphisms  
$\psi_i : H \longrightarrow H^{\psi_i}, \ 0 \leq i \leq n-1,$
with $\psi_0 = \Id_H$ and $\psi_1 = \psi$. We define the following groups: the iterated group $\chi^{n}(H)$, $\chi_n(H)$ (to which we refer affectionately as the mother group), and its quotient $\chi(n,H)$ by the following  presentations:
\begin{align}
\chi^{0}(H) &:= H, \qquad \chi^{n}(H) := \chi\big(\chi^{n-1}(H)\big), \ n \geq 1, \label{def:iter_grp}\\
\chi_n(H) &:= \big\langle H, H^{\psi}, \ldots, H^{\psi_{n-1}} \;\big|\; [h^{\psi_i}, h^{\psi_j}] = 1, \ \forall h \in H, \ i \neq j \big\rangle, \label{def:mother_grp}\\
\chi(n,H) &:= \big\langle\, \chi_n(H) \;\big|\; [D_{ij}, L_{kl}] = 1, \ \forall\, i \neq j, \ k \neq l \,\big\rangle, \label{def:chin_1}
\end{align}
where
$ D_{ij} : = [H^{{\psi}_i}, H^{{\psi}_j}], \bigl[ h^{\psi_i}, \psi_i^{-1}\psi_j \bigr] :=  h^{-\psi_i} \big(h^{\psi_i}\big)^{\psi_i^{-1}\psi_j} = h^{-\psi_i} h^{\psi_j}$, and $ L_{ij} := [H^{{\psi}_i}, \psi_i^{-1}\psi_j]$.

In this way the group $\chi_n(H)$ has $n$ isomorphic subgroups $H, \, H^{\psi},\, \ldots\,
, H^{{\psi}_{n-1}}$, that commute weakly with each other according to (\ref{eq:wp}).
Notice that the relations $[D_{ij}, L_{ij}] = 1$ for all pairs $(i,j), \; 0 \leqslant i, j \leqslant n-1, \; i \neq j,$ in the presentation of $\chi_n(H)$ are already consequences of the relations $[h^{\psi_i}, h^{\psi_j}]=1$ (see \cite[Proposition 4.1.7]{sidki-wp}). It is precisely the further relations $[D_{ij}, L_{kl}] = 1$ with $\{i,j\} \neq \{k,l\}$, which are not consequences of weak commutativity, that distinguish $\chi(n,H)$ from the mother group $\chi_n(H)$ and are responsible for preserving finiteness. The class $\chi(n,H)$ was modeled on $\chi(H)$ so as to preserve especially finiteness and nilpotency. On the other hand, it is expected that the mother group $\chi_n(H)$ would be infinite for finite non-cyclic groups $H$ and $n \geq 4$. Though, for $n = 3$ and small abelian groups the situation is mixed, as seen in Theorem~\hyperref[th:thmD]{D}: indeed, $\chi_3(C_2 \times C_2)$ is already infinite, while $\chi_3(H)$ remains finite when $H = C_{p^t} \times C_{p^u}$ with $p$ odd.  
 
Under the assumption that $H$ is a nilpotent group, the third named author obtained in \cite{oliveira-thesis} some upper bounds on the class of nilpotency of $\chi(n, H)$, similar to those found in \cite{grs}. Furthermore, an estimate was established for the order of $\chi(n,H)$ when $H$ is a finite abelian group. In this paper, we provide a new and sharper estimate for the order of $\chi(n, H)$ for any finite group $H$. Specifically, we prove 

\begin{thmA} \label{th:thmA} 
\textit{If $H$ is a finite group, then $|\chi(n,H)|$
divides $|\chi(H)|^{\binom{n}{2}} |H|^{n-2\binom{n}{2}}$}.
\
\end{thmA}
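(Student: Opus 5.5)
The plan is to reduce the statement to two order identities for $\chi(H)$ and one structural fact about $\chi(n,H)$. Put $m=\binom{n}{2}$ and write $D=[H,H^\psi]$ and $L=[H,\psi]$ in $\chi(H)$, both normal in $\chi(H)$. Two facts from the basic theory of $\chi(H)$ are standard, for instance from \cite{sidki-wp}: first, $\chi(H)/D\cong H\times H$, and second, $\chi(H)/L\cong H$. From the first, $|\chi(H)|=|D|\,|H|^2$. Hence
\[
|\chi(H)|^{m}|H|^{n-2m}=|D|^{m}|H|^{n},
\]
and Theorem~A becomes the claim that $|\chi(n,H)|$ divides $|D|^{m}|H|^{n}$.

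\emph{Step 1 (each $D_{ij}$ is small).} For $i\neq j$, the subgroup $\langle H^{\psi_i},H^{\psi_j}\rangle$ of $\chi(n,H)$ satisfies the defining relations of $\chi(H)$. It is therefore a homomorphic image of $\chi(H)$, with $H\mapsto H^{\psi_i}$ and $H^\psi\mapsto H^{\psi_j}$. Under this map $D$ goes onto $D_{ij}$, so $|D_{ij}|$ divides $|D|$.

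\emph{Step 2 (each $D_{ij}$ is normal in $\chi(n,H)$).} $D_{ij}$ is normalized by $H^{\psi_i}$ and $H^{\psi_j}$, because it is the image of the normal subgroup $D$ of $\chi(H)$. Now take $k\notin\{i,j\}$. Every element $h^{\psi_k}$ can be written as $h^{\psi_i}\cdot\bigl(h^{-\psi_i}h^{\psi_k}\bigr)$, which lies in $H^{\psi_i}L_{ik}$. The defining relation $[D_{ij},L_{ik}]=1$ of $\chi(n,H)$ says that $L_{ik}$ centralizes $D_{ij}$. Hence $H^{\psi_k}$ also normalizes $D_{ij}$, and $D_{ij}\trianglelefteq\chi(n,H)$. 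This is the step where the extra relations of $\chi(n,H)$ are used; it is what fails in the mother group $\chi_n(H)$.

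\emph{Step 3 (the quotient by the product).} Let $N=\prod_{i<j}D_{ij}$, a product of normal subgroups and so normal. Modulo $N$ the copies $H^{\psi_i}$ commute pairwise, so $\chi(n,H)/N$ is generated by $n$ pairwise commuting images of $H$. It is therefore a homomorphic image of $H^{n}$, and its order divides $|H|^n$.

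\emph{Step 4 (order of $N$).} For finite normal subgroups $A$ and $B$ we have $|AB|=|A||B|/|A\cap B|$, which divides $|A||B|$. Induction on the number of factors then shows that $|N|$ divides $\prod_{i<j}|D_{ij}|$, and by Step 1 this divides $|D|^{m}$. In particular $N$ is finite. Combining with Step 3, $|\chi(n,H)|=|N|\,|\chi(n,H)/N|$ divides $|D|^m|H|^n$, which is the required bound.

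The main obstacle is Step 2, the normality of $D_{ij}$ under the third copies $H^{\psi_k}$. It hinges on the decomposition $H^{\psi_k}\subseteq H^{\psi_i}L_{ik}$ together with the relation $[D_{ij},L_{ik}]=1$ for $\{i,k\}\neq\{i,j\}$. Everything else consists of routine order counting and known facts about $\chi(H)$.
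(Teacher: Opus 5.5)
Your proposal is correct and follows the paper's proof essentially step for step. The paper also gets $|D_{ij}|\mid|D|$ from the epimorphism $\chi(H)\twoheadrightarrow\langle H^{\psi_i},H^{\psi_j}\rangle$ (Corollary~\ref{cor:embed}), proves normality of $D_{ij}$ via $[D_{ij},L_{ki}]=1$ exactly as in your Step~2, and combines $\chi(n,H)/\prod D_{ij}\cong H^n$ (Proposition~\ref{prop:epichi}) with $|\chi(H)|=|D||H|^2$. The only differences are minor: you need just that the quotient is an image of $H^n$ rather than the full isomorphism, and you spell out why $|\prod D_{ij}|$ divides $\prod|D_{ij}|$, which the paper leaves implicit.
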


Such an estimate is attained in particular when $H$ is an elementary abelian $p$-group, with $p$ an odd prime; moreover, for $p = 2$ our estimate is even sharper.

If $H$ is a perfect group, then $\chi(n,H)$ exhibits a structure closely analogous to that of the original group $\chi(H)$ (cf. \cite[Subsection 4.4]{sidki-wp}), as we will see in the next theorem; from now on, according to  \cite[Section 4]{sidki-wp} we denote by $D$ the subgroup $D(H) = [H, H^\psi] \leqslant \chi(H)$.

\begin{thmB} \label{th:thmB}  
\textit{If $H$ is a perfect group, then}
\begin{itemize}
       \item[(i)]  $D_{ij} = D_{ks} \cong D, \ \forall\, 0 \leq i < j \leq n-1, \ 0 \leq k < s \leq n-1$; 
       \item[(ii)] $\chi(n,H)\cong ( \ldots ((D\cdot H) \cdot H^{\psi}) \cdots ) \cdot H^{\psi_{n-1}} \, \cong T_n(\tilde{H})$.
\end{itemize}
\end{thmB}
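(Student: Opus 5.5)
The plan is to reduce everything to the structure of $\chi(H)$ for perfect $H$ from \cite[Subsection 4.4]{sidki-wp}, and then to exploit the extra relations $[D_{ij},L_{kl}]=1$ of \eqref{def:chin_1}. First observe that for each pair $i\neq j$ the subgroup $\langle H^{\psi_i},H^{\psi_j}\rangle$ of $\chi(n,H)$ is a homomorphic image of $\chi(H)$ via $h\mapsto h^{\psi_i}$, $h^\psi\mapsto h^{\psi_j}$. Under this map $D$ goes onto $D_{ij}$ and $L=[H,\psi]$ goes onto $L_{ij}$. So all the identities valid in $\chi(H)$ for perfect $H$ are available inside each such two-generated piece. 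These include $\chi(H)=D\cdot H\cdot H^\psi$, $[D,L]=1$, and the fact that $D$ is normalized by $H$ and $H^\psi$.

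\textbf{Step 1 (independence of the pair).} The aim is $D_{ij}=D_{ik}$ whenever $j\neq k$; by symmetry this gives (i). For $d\in D_{ij}$ and $h\in H$ write $h^{\psi_k}=h^{\psi_i}\,[h^{\psi_i},\psi_i^{-1}\psi_k]$ with the second factor in $L_{ik}$. Since $[D_{ij},L_{ik}]=1$, conjugation by $h^{\psi_k}$ agrees with conjugation by $h^{\psi_i}$ on $D_{ij}$. Hence $D_{ij}$ is normalized by every $H^{\psi_k}$, so $D_{ij}\trianglelefteq\chi(n,H)$.

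To get the inclusion $D_{ik}\leq D_{ij}$, pass to $\overline{G}=\chi(n,H)/D_{ij}$. There $H^{\psi_i}$ and $H^{\psi_j}$ commute elementwise. Then I would use perfectness, $H=[H,H]$: write $y^{\psi_k}$ as a product of commutators $[a^{\psi_k},b^{\psi_k}]$, and replace $a^{\psi_k}$ by $a^{\psi_j}\ell$ with $\ell\in L_{jk}$. Using $[D_{ik},L_{jk}]=1$ and the three subgroup lemma, the goal is to show $[H^{\psi_i},H^{\psi_k}]$ becomes trivial in $\overline{G}$. This is the step I expect to be the main obstacle: the commutator bookkeeping has to mix three indices, and the relations $[D_{ij},L_{kl}]=1$ with $\{i,j\}\neq\{k,l\}$ must be used precisely there. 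If the direct computation stalls, I would first try to prove $[H^{\psi_i},H^{\psi_k}]\leq D_{ij}L_{ij}$ and then intersect. For the isomorphism $D_{ij}\cong D$, map $\chi(n,H)$ onto $\chi(H)$ by sending $\psi_0\mapsto \Id$ and all other $\psi_i\mapsto\psi$. This map respects all defining relations, and its restriction to $D_{01}$ inverts the natural surjection $D\to D_{01}$.

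\textbf{Step 2 (iterated semidirect structure).} Let $\mathcal D$ be the common subgroup from Step 1; it is normal in $\chi(n,H)$. I would proceed by induction on $n$, showing $\chi(n,H)=\chi(n-1,H)\cdot H^{\psi_{n-1}}$, where $\chi(n-1,H)$ denotes the subgroup generated by the first $n-1$ copies. Generation is clear. For the intersection, note that modulo $\mathcal D$ the copies commute pairwise, so $\chi(n,H)/\mathcal D$ is a central-product-type quotient of $H^n$. Then the retractions $\chi(n,H)\to\chi(n-1,H)$ (identify $\psi_{n-1}$ with $\psi_{n-2}$) and $\chi(n,H)\to H$ should split off each factor. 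This gives the chain $(\cdots((D\cdot H)\cdot H^\psi)\cdots)\cdot H^{\psi_{n-1}}$. Finally, I would compare it with the model $T_n(\tilde H)$ as defined in the paper, building a homomorphism from the presentation \eqref{def:chin_1} onto it. Both groups have the same order (or the same filtration quotients) computed layer by layer, and this yields the isomorphism in (ii).
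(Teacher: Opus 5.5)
Your proposal has two genuine gaps, both in Step~1, and the first one rests on a homomorphism that does not exist. For $n\ge 3$ the assignment $h^{\psi_0}\mapsto h$, $h^{\psi_i}\mapsto h^{\psi}$ $(i\ge 1)$ does not respect the defining relations of $\chi(n,H)$ when $H$ is perfect. It carries $D_{12}$ onto $[H^{\psi},H^{\psi}]=(H')^{\psi}=H^{\psi}$ and $L_{01}$ onto $L$. So the defining relation $[D_{12},L_{01}]=1$ would force $[H^{\psi},L]=1$ in $\chi(H)$. But in $\chi(H)/D\cong H\times H$ the element $[k^{\psi},h^{-1}h^{\psi}]$ maps to $(1,[k,h])$, so $[H^{\psi},L]$ maps onto $1\times H'\neq 1$. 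This is why the paper folds copies only for abelian $H$ (Proposition~\ref{prop:embed}(ii)). The retraction $\chi(n,H)\to\chi(n-1,H)$ in your Step~2 fails for the same reason: identifying $\psi_{n-1}$ with $\psi_{n-2}$ turns $[D_{n-2,n-1},L_{0,n-2}]=1$ into $[(H')^{\psi_{n-2}},L_{0,n-2}]=1$, which is false modulo $C$. The isomorphism $D_{ij}\cong D$ must instead come from the model you only bring in at the end. The map $\lambda_{\tilde H}\colon\chi(n,H)\to T_n(\tilde H)$ carries $D_{ij}$ onto $\overline{D}_{ij}\cong\tilde H\cong D$ (Remark~\ref{rem:epialfa}), while $D_{ij}$ is itself the image of $D$ under $\lambda_{ij}$ (Corollary~\ref{cor:embed}). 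This squeeze is exactly how the paper obtains $D_{ij}\cong D$.

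The second gap is the inclusion $D_{ik}\le D_{ij}$, which is the heart of (i) and which you leave open. The three-subgroup route you sketch is circular. In $\chi(n,H)/D_{ij}$ it only reduces $[H^{\psi_i},[H^{\psi_k},H^{\psi_k}]]\le D_{ij}$ to $[D_{ik},H^{\psi_k}]\le D_{ij}$. But $[D_{ik},H^{\psi_k}]$ is the image under $\lambda_{ik}$ of $[D,H^{\psi}]$, which equals $D$ when $H$ is perfect; so this is just $D_{ik}\le D_{ij}$ again. The missing idea is that $D_{ik}$ is itself perfect, being a quotient of $D\cong\tilde H$. Combine this with the relabelling you did prove: every copy acts on $D_{ik}$ as $H^{\psi_i}$ and $H^{\psi_k}$ do. Hence conjugation by $[x^{\psi_i},y^{\psi_j}]$ agrees on $D_{ik}$ with conjugation by $[x^{\psi_i},y^{\psi_k}]$, which gives $[D_{ik},D_{ij}]=[D_{ik},D_{ik}]$. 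In $\chi(n,H)/D_{ij}$ the image of $D_{ik}$ is then perfect and equal to the image of $[D_{ik},D_{ij}]\le D_{ij}$, hence trivial. This is the paper's chain $[D_{ij},D_{ij}]=[D_{ij},D_{ks}]=[D_{ks},D_{ks}]$.

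With $C=D_{01}\cong D$ in hand, (ii) needs neither retractions nor an order count. Proposition~\ref{prop:epichi} gives $\chi(n,H)/D_{01}\cong H^n$, and hence the iterated semidirect decomposition. Then $\ker\lambda_{\tilde H}\le C=D_{01}$, together with the injectivity of $\lambda_{\tilde H}$ on $D_{01}$ that comes from the squeeze, yields $\chi(n,H)\cong T_n(\tilde H)$.
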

Here and elsewhere in the text, the dots stand for (internal) semi-direct products; the group $T_n(\tilde{H})$ above is a generalization of the group $T(\tilde{H})$ (cf. \cite[Subsection 2.3]{sidki-wp}) which is constructed in Subsection~\ref{ssec:tn}.

Concerning nilpotency, we establish the following bounds for the group $\chi(n,H)$.
\begin{thmC}  \label{th:thmC}
Let $H$ be an $m$-generated nilpotent group of class at most $c$ such that $m \geq 2$, $c\geq 1$, and let $G:=\chi(n,H)$.
\begin{itemize}
    \item[(i)] If $m = 2$, then $\gamma_{c+2}(G)=1$;
    \item[(ii)] If $2 < m \leqslant c+2$, then $\gamma_{c+3}(G)=1$;
    \item[(iii)] If  $m \geqslant c+3$, then  $\gamma_{c+3}(G)$ is an elementary abelian $2$-group of rank at most
\begin{equation*}
\sum_{k=c+3}^{m}\binom{m}{k}\binom{n}{2}.
\end{equation*}
\end{itemize}
\end{thmC}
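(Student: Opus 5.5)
We reduce Theorem C to the known nilpotency bounds for $\chi(H)$ from \cite{grs}, using the extra relations $[D_{ij},L_{kl}]=1$ to decouple different pairs of copies. We recall those bounds. For $H$ $m$-generated nilpotent of class $c$: $\chi(H)$ is nilpotent of class at most $c+1$ if $m=2$, and of class at most $c+2$ if $2<m\le c+2$. In general $\gamma_{c+3}(\chi(H))$ is an elementary abelian $2$-group of rank at most $\sum_{k=c+3}^m\binom{m}{k}$, lying inside $D\cap L$.

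\textbf{Structure of $G$.} Set $W=\langle D_{ij},L_{ij}\mid i\neq j\rangle$. Then $W$ is normal in $G$ and $G=W\cdot H$, since $h^{\psi_j}=h\,[h,\psi_j]$ and $[h,\psi_j]\in L_{0j}$. For each pair $i<j$ the subgroup $\langle H^{\psi_i},H^{\psi_j}\rangle$ is a homomorphic image of $\chi(H)$. So the subgroups $D_{ij}$ and $L_{ij}$ inherit all commutator identities valid in $\chi(H)$.

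The relations $[D_{ij},L_{kl}]=1$ for $\{i,j\}\neq\{k,l\}$, together with the identity $[D_{ij},L_{ij}]=1$ from \cite[Proposition 4.1.7]{sidki-wp}, show that every $D_{ij}$ centralizes every $L_{kl}$. Moreover $L_{ij}L_{jk}\supseteq L_{ik}$, so $L:=\langle L_{ij}\rangle=\langle L_{01},\dots,L_{0,n-1}\rangle$. We also need to compare the $D_{ij}$ among themselves. I would show $[D_{ij},D_{kl}]\le D_{ij}\cap D_{kl}$ modulo central pieces, by writing generators of $D_{kl}$ as products of elements of $H^{\psi_i}$ and $L$-elements and using that $D_{ij}$ commutes with $L$.

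\textbf{Bounding lower central terms.} A commutator of weight $w$ in the generators of $G$ is expanded by writing every generator as $h\cdot\ell$ with $h\in H$ and $\ell\in L$. Terms involving two or more $L$-factors together with a $D$-factor vanish, because $[D,L]=1$. The surviving terms are images of commutators in a single $\chi(H)$ associated with one pair $(0,j)$, or products of such terms. Hence $\gamma_w(G)$ is generated modulo $\gamma_{w+1}(G)$ by the images of $\gamma_w(\langle H,H^{\psi_j}\rangle)$, $j=1,\dots,n-1$, and of $\gamma_w(\langle H^{\psi_i},H^{\psi_j}\rangle)$. Choosing $w=c+2$ in case (i) and $w=c+3$ in case (ii) kills each of these images by the bounds for $\chi(H)$. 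Since $G$ is nilpotent (it is generated by nilpotent images of $\chi(H)$ and is a finite extension in this sense), descending induction gives $\gamma_w(G)=1$.

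For (iii), the same argument shows that $\gamma_{c+3}(G)$ is the product over the $\binom n2$ pairs of the images of $\gamma_{c+3}(\chi(H))$. These images are central elementary abelian $2$-subgroups that commute with one another, because they lie in $D\cap L$ and $[D_{ij},L_{kl}]=1$. Consequently $\gamma_{c+3}(G)$ is elementary abelian of rank at most $\binom n2\sum_{k}\binom mk$.

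\textbf{Main obstacle.} The hard part is the expansion step: we must verify that mixed commutators involving three or more distinct indices $i,j,k$ really reduce to single-pair commutators. This needs a careful Hall–Witt and collection argument inside $W$. It also requires showing that $[D_{ij},D_{kl}]$ contributes nothing new at weight $c+2$ or $c+3$. I would first settle the case $n=3$ explicitly, and then note that every relevant commutator involves at most the indices appearing in a bounded number of generators.
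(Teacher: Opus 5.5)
There is a genuine gap, and it is exactly the step you call the ``main obstacle''. You assert that mixed commutators collapse to single-pair commutators, but you do not derive it, and the mechanism you sketch cannot deliver it. Writing every generator as $h\ell$ with $h\in H$, $\ell\in L$ creates three-copy terms about which $[D,L]=1$ says nothing. For example, take $\ell_1=a^{-1}a^{\psi_1}\in L_{01}$ and $\ell_2=b^{-1}b^{\psi_2}\in L_{02}$. The expansion of $[a^{\psi_1},b^{\psi_2}]=[a\ell_1,b\ell_2]$ contains the factor $[\ell_1,\ell_2]$. That factor has no $D$-factor, and as written it is not a commutator in any $\langle H,H^{\psi_j}\rangle$, even though $[a^{\psi_1},b^{\psi_2}]$ itself lies in $\langle H^{\psi_1},H^{\psi_2}\rangle$. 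So the sentence ``hence $\gamma_w(G)$ is generated modulo $\gamma_{w+1}(G)$ by the $\gamma_w$ of the pair subgroups'' does not follow from what precedes it.

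The nilpotency of $G$ that your descending induction needs is also unsupported. A group generated by nilpotent subgroups need not be nilpotent, and ``finite extension'' has no content when $H$ is infinite; the paper obtains nilpotency through Lemmas~\ref{lem:comut_chi}--\ref{lem:nil_chi}. Finally, the factors in (iii) are not central in general (for $H=C_2^5$ one has $\gamma_5(\chi(H))\neq 1$), though you only use that they commute.

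The missing idea is the relabelling identity of Lemma~\ref{lem:relacoes_chi}(ii)--(iii), which follows at once from relations you already list. For $d\in D_{ij}$, $z\in H$ and $k\notin\{i,j\}$, write $z^{\psi_k}=z^{\psi_i}\ell$ with $\ell=z^{-\psi_i}z^{\psi_k}\in L_{ik}$. Since $d^{z^{\psi_i}}\in D_{ij}$ commutes with $\ell$, we get $d^{z^{\psi_k}}=d^{z^{\psi_i}}$.

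Do not expand against $H$. Put $P_{ij}=\langle H^{\psi_i},H^{\psi_j}\rangle$ and take a simple commutator $[x_1,\dots,x_w]$ with entries from the copies. Let $x_{s+1}\in H^{\psi_j}$ be the first entry outside the copy $H^{\psi_i}$ of $x_1$; if there is none, the commutator lies in $\gamma_w(H^{\psi_i})$. Then $[x_1,\dots,x_{s+1}]\in D_{ij}\cap\gamma_{s+1}(P_{ij})$. Every later entry from a third copy can be replaced by its $\psi_i$-counterpart without changing the value. Since $\gamma_w(G)$ is the normal closure of such simple commutators, $\gamma_w(G)=\langle\gamma_w(P_{ij}) : i<j\rangle^{G}$ for $w\geq 2$.

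This settles (i) and (ii) directly from the $n=2$ bounds of \cite{grs}. It needs no Hall--Witt collection, no analysis of $[D_{ij},D_{kl}]$, and no prior knowledge that $G$ is nilpotent.

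For (iii), $\gamma_{c+3}(P_{ij})\le\gamma_{c+1}(P_{ij})\le D_{ij}\cap L_{ij}$. This holds because $\chi(H)/D\cong H\times H$ and $\chi(H)/L\cong H$ have class at most $c$, and $\lambda_{ij}$ maps $D,L$ onto $D_{ij},L_{ij}$. Each $\gamma_{c+3}(P_{ij})$ is normal in $G$ by relabelling, so $\gamma_{c+3}(G)=\prod_{i<j}\gamma_{c+3}(P_{ij})$. Distinct factors commute because $[D_{ij},L_{kl}]=1$, which gives your bound.

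With this repair, your black-box reduction to \cite{grs} is a valid and much shorter alternative to the paper's proof. The paper instead re-derives the GRS identities inside $\chi(n,H)$ (Lemma~\ref{lem:relacoes_chi}(iv)--(v), Lemmas~\ref{lem:relnil_chi}--\ref{lem:nil_chi}) and proves nilpotency separately via local nilpotency. The reduction you are aiming at is the one sketched in Remark~\ref{rem:n3_reduction}.
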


Turning to the mother group $\chi_n(H)$, we obtain a dichotomy for $n = 3$ that contrasts sharply with the finiteness behaviour of $\chi(n,H)$.

\begin{thmD} \label{th:thmD}
\textit{Let $H$ be a $2$-generated abelian $p$-group.}
\begin{itemize}
    \item[(i)] \textit{If $p$ is odd and $H \cong C_{p^t} \times C_{p^u}$ with $t \geq u \geq 1$, then $\chi_3(H)$ is finite of order dividing $p^{3t+8u}$ and has nilpotency class at most $3$.}
    \item[(ii)] \textit{If $H = C_2 \times C_2$ (the Klein four-group), then $\chi_3(H)$ is infinite, and the kernel of the natural epimorphism $\pi : \chi_3(H) \twoheadrightarrow \chi(3,H)$ has $\mathbb{Z}^4$ as a homomorphic image.}
\end{itemize}
\end{thmD}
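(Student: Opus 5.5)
Theorem D has two independent halves, and I would treat them by different methods.

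For (i), the plan is to bound $\chi_3(H)$ for $H=\langle a,b\rangle \cong C_{p^t}\times C_{p^u}$, $p$ odd, by commutator collection. First I would record the defining relations explicitly. Weak commutativity of $h=a^\alpha b^\beta$ with $h^{\psi_j}$, for all $\alpha,\beta$, linearizes (as in the classical analysis of $\chi(H)$ for abelian $H$) into the following: $[a^{\psi_i},a^{\psi_j}]=[b^{\psi_i},b^{\psi_j}]=1$, and $[a^{\psi_i},b^{\psi_j}]=[a^{\psi_j},b^{\psi_i}]^{-1}$ modulo higher commutators. Here $p$ odd is used so that $2$ is invertible. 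Set $G=\chi_3(H)$. Then $G/G'$ is $H^3$, of order $p^{3(t+u)}$, and $G'/\gamma_3(G)$ is generated by the three elements $d_{ij}=[a^{\psi_i},b^{\psi_j}]$. Next I would show $\gamma_3(G)$ is generated by finitely many triple commutators $[d_{ij},x]$ with $x$ a generator. Using the Hall--Witt identity and the weak-commutativity relations applied to products such as $a^{\psi_i}b^{\psi_i}$, I would show these are central and that $\gamma_4(G)=1$, giving class at most $3$. Finally I would bound exponents: the $d_{ij}$ have order dividing $p^u$ by bilinearity, and the $\gamma_3$ generators also have order dividing $p^u$. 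Counting carefully should give the exponent $3t+8u$. I expect the main obstacle to be exactly this step, proving $\gamma_4=1$ and pinning down the count of independent triple commutators, namely that their total contributes at most $p^{5u-3u}$ beyond the obvious terms. I would first try a computer-free argument modelled on the $\chi(H)$ computation for $C_{p}\times C_p$, passing to a universal relatively free group of class $3$ and checking that the relations cut it to the required size.

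For (ii), $H=C_2\times C_2$. The natural epimorphism $\pi$ exists because $\chi(3,H)$ is by definition a quotient of $\chi_3(H)$, and $\chi(3,H)$ is finite by Theorem A. So it suffices to exhibit a surjection from $\ker\pi$ onto $\mathbb Z^4$. Here I would use the appendix-style approach. The plan is to find a finite-index subgroup $K\le\chi_3(H)$ with $K\le\ker\pi$, or containing $\ker\pi$ with finite index, whose abelianization can be computed via Reidemeister--Schreier. Concretely, write the presentation with six involutions $a_i,b_i$ ($i=0,1,2$) and relations $[a_i,a_j]=[b_i,b_j]=[a_ib_i,a_jb_j]=1$. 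Then $\ker\pi$ is the normal closure of the words $[[x_i,y_j],[z_k,z_k^{-1}w_l]]$ with $\{i,j\}\neq\{k,l\}$; it has finite index. Reidemeister--Schreier, carried out by hand or with the GAP computation in Savchuk's appendix, gives $\ker\pi^{ab}$, and I expect it to contain $\mathbb Z^4$ as a quotient. Infiniteness then follows immediately.

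A cleaner alternative for (ii) is to construct an explicit homomorphism from $\chi_3(H)$ to $\mathbb Z^4\rtimes F$ with $F$ finite, sending $a_i,b_i$ to affine involutions that pairwise weakly commute, with the image of $\ker\pi$ being the full translation lattice $\mathbb Z^4$. Then I would verify the defining relations directly. I would try this first, guessing the involutions as reflections in a $4$-dimensional lattice.
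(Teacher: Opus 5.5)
Your outline for (i) stops where the proof has to start. Everything you actually establish holds verbatim for $p=2$:
\begin{itemize}
\item $G/G'\cong H^3$;
\item $G'/\gamma_3(G)$ is generated by the three $d_{ij}$;
\item $\gamma_3(G)$ is generated modulo $\gamma_4(G)$ by the $[d_{ij},x]$.
\end{itemize}
Yet for $p=2$ the group $\chi_3(C_2\times C_2)$ is infinite and the kernel $K$ of $\chi_3(H)\twoheadrightarrow\chi(3,H)$ is not central. So the claim ``these triple commutators are central, $\gamma_4(G)=1$, and only two are independent'' is the entire content of (i). It must use oddness of $p$ essentially, and your plan never says where.

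The one place you do invoke $p$ odd is wrong. Weak commutativity gives, exactly and for every $p$, $[a^{\psi_i},b^{\psi_j}]=[a^{\psi_j},b^{\psi_i}]$ (the proof of Lemma~\ref{lem:relacoes_chi}(i) uses only weak commutativity). Your version with an inverse, combined with this, would force $d_{ij}^2\in\gamma_3(G)$, which is false. Nor can the step be done by ``passing to a relatively free group of class $3$''. That only computes $G/\gamma_4(G)$ and cannot show $\gamma_4(G)=1$: for $C_2\times C_2$ the class-$3$ quotient is finite (nilpotent and generated by involutions) while the group is infinite.

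The missing argument, as in the paper, has three parts.
\begin{itemize}
\item[(a)] $d_{ij}$ is central in $\langle H^{\psi_i},H^{\psi_j}\rangle$ (Sidki's result for $2$-generated abelian $H$). Hence $K$ is normally generated by the six elements $[d_{ij},h^{\psi_l}]$ with $h\in\{a,b\}$ and $l\notin\{i,j\}$.
\item[(b)] This is where oddness enters. Put $X=d_{02}$, $Y=d_{12}$, $g=a^{-1}a^{\psi}$. Expanding $[aa^{\psi},b^{\psi_2}]=[a^{\psi}a,b^{\psi_2}]$ and its analogue gives $(XY)^g=YX$ and $(YX)^g=XY$. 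Since $g$ has odd order, $g\in\langle g^2\rangle$, which forces $XY=YX$. By symmetry, $[D_{ij},D_{kl}]=1$.
\item[(c)] Hall--Witt then collapses the six generators to $k_1=k_3=k_4$ and $k_2$, and shows both commute with all three copies.
\end{itemize}
Thus $K=\langle k_1,k_2\rangle$ is central of order dividing $p^{2u}$. Together with $|\chi(3,H)|=p^{3t+6u}$ and class $2$ (Corollary~\ref{cor:abelian_odd}), this gives the order bound and class at most $3$. Here $\gamma_3(G)=K$, so your layer count ($3t+3u$, then $3u$, then $2u$) is the right bookkeeping, but only after (b) and (c) are proved.

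For (ii), your ``cleaner alternative'' is exactly the paper's route. The paper gives six explicit integer involutions in $\mathrm{GL}_5(\mathbb{Z})$ with last column $(0,0,0,0,1)^{\top}$. They satisfy $[A,B]=1$ in each copy, and $M,M^{\psi},M^{\psi_2}$ commute pairwise for $M\in\{A,B,AB\}$. Under this map $k_1,\dots,k_4$ go to translations $T(v^{(s)})$, and the $4\times 4$ matrix of the $v^{(s)}$ has determinant $64$. That construction is the whole proof, and you do not supply it; ``guessing reflections in a $4$-dimensional lattice'' is not yet an argument. Once you have such matrices, you must also check that $\rho(\ker\pi)$ lies in the normal translation subgroup. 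You do this on a finite set of normal generators of $\ker\pi$: the paper's $k_1,\dots,k_4$, or the finitely many $[[x^{\psi_i},y^{\psi_j}],z^{-\psi_k}z^{\psi_l}]$. The image then only needs full rank, not to be the whole lattice.

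Your Reidemeister--Schreier route is a legitimate computer-assisted alternative, since $\ker\pi$ has index at most $2^9$ by Theorem~A. It must be applied to $\ker\pi$ itself or to a finite-index overgroup. A rank-$4$ abelianization of a finite-index \emph{subgroup} of $\ker\pi$ does not give a surjection of $\ker\pi$ onto $\mathbb{Z}^4$. As written, though, you only ``expect'' the outcome. Also, Savchuk's appendix contains no such computation; it constructs the Hamming-code diagonal representation.
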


The paper is organized as follows. In Section~\ref{sec:prelim}, we discuss some preliminary results; in particular, in Subsection~\ref{ssec:embed} we establish the embedding result $\chi_m(H) \to \chi_n(H)$ for all $m \leq n$, show that the corresponding embedding $\chi(H) \to \chi(n,H)$ holds when $H$ is abelian, perfect, or satisfies $R(H)=1$, and exhibit counterexamples showing that it fails in general (Remark~\ref{rem:embed_fails}), and discuss some properties of the iterated groups $\chi^n(H)$; in Subsection~\ref{ssec:diag} we recall the diagonal representations $\delta_n:\chi_n(H)\to\mathbf{X}_{2^n-1}(H)$, which come from the Hamming codes, and, in Subsection~\ref{ssec:tn}, we construct the group $T_n(\tilde{H})$, an $n$-fold generalization of the group $T(\tilde{H})$ of \cite{sidki-wp}, and establish its basic properties. In Section~\ref{sec:chin}, we discuss the group $\chi(n,H)$ and prove Theorems~\hyperref[th:thmA]{A} and~\hyperref[th:thmB]{B}, as well as Theorem~\hyperref[th:thmC]{C} for this group.
In Section~\ref{sec:chi_n_p} we study the mother group $\chi_3(H)$ for $H$ abelian and prove Theorem~\hyperref[th:thmD]{D}; this dichotomy also shows the necessity of the relations $[D_{ij}, L_{kl}]=1$ in the definition of $\chi(n,H)$ to ensure finiteness.  In Section~\ref{sec:hybrid} we compare the iterated group $\chi^2(H)$ with the mother group $\chi_4(H)$, both generated by four copies of $H$, and determine their largest common quotient $Q(H)$; we close with diagonal representations whose images are co-nilpotent in the direct power, and with a question on their growth with respect to $n$. In Appendix~\ref{app:hamming}, D.~Savchuk constructs the diagonal representation of $\chi_n(H)$ in $\mathbf{X}_{2^n-1}(H)$ by means of the parity check matrix of the Hamming error-correcting code, and proves that its image contains $\mathbf{X}_{2^n-1}(\gamma_n(H))$.

\section{Preliminary Results} \label{sec:prelim}
In this section we establish some preliminary results that will be used in the sequel. Let us start by discussing the embedding issue.
\subsection{Embeddability} \label{ssec:embed}
Let us write $G(n)$ as the group representing our classes $\chi_n(H)$ and $\chi(n,H)$ with their corresponding presentations written simply as 
\begin{equation*}
  \langle H_0, \ldots, H_{n-1}  \mid  S \, \rangle,  
\end{equation*}
where $H_i$ stands for $H^{\psi_i}, \; i=0, \ldots, n-1$, and $S$ stands for the corresponding sets of relations, according to Definitions~\ref{def:mother_grp} and~\ref{def:chin_1}.
Thus we have 
\begin{equation} \label{eq:G_n}
G(n) = \langle H_{(i, n)} \mid i = 0, \ldots, n-1 \rangle, 
\end{equation}
where $H_{(i, n)}$ is the image of $H_i$ in $G(n)$, so that $G(2)$ is the original group $\chi(H).$ With this notation we can state the
\begin{proposition} \label{prop:embed_mother}
   There is an embedding of $\chi_{n-1}(H)$ into $\chi_n(H)$ for all $n \geqslant 3.$
\end{proposition}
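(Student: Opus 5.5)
The natural map is the one induced on generators: send the copy $H_{(i,n-1)}$ of $\chi_{n-1}(H)$ to $H_{(i,n)}$ in $\chi_n(H)$ for $0 \le i \le n-2$, via $h^{\psi_i} \mapsto h^{\psi_i}$. Every defining relation $[h^{\psi_i},h^{\psi_j}]=1$ of $\chi_{n-1}(H)$ is also a defining relation of $\chi_n(H)$. By von Dyck's theorem this gives a homomorphism $\iota:\chi_{n-1}(H)\to\chi_n(H)$. The whole task is to show $\iota$ is injective, and the plan is to construct a retraction $\rho:\chi_n(H)\to\chi_{n-1}(H)$ with $\iota\rho=\mathrm{id}$.

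For $\rho$, fix the last copy $H_{n-1}$ and send it onto one of the surviving copies, say $H_{n-2}$. Concretely, define $\rho$ on generators by
\[
h^{\psi_i}\mapsto h^{\psi_i}\quad (0\le i\le n-2),\qquad h^{\psi_{n-1}}\mapsto h^{\psi_{n-2}}.
\]
We then check the defining relations of $\chi_n(H)$ under $\rho$:
\begin{itemize}
\item for $i,j\le n-2$, the relation $[h^{\psi_i},h^{\psi_j}]=1$ maps to itself;
\item for $i\le n-3$ and $j=n-1$, it maps to $[h^{\psi_i},h^{\psi_{n-2}}]=1$, which holds in $\chi_{n-1}(H)$;
\item for $i=n-2$ and $j=n-1$, it maps to $[h^{\psi_{n-2}},h^{\psi_{n-2}}]=1$, which is trivial.
\end{itemize}
Hence $\rho$ is a well-defined homomorphism. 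On the generators of $\chi_{n-1}(H)$, $\rho\circ\iota$ is the identity, so $\rho\iota=\mathrm{id}$ and $\iota$ is injective. Moreover $\chi_{n-1}(H)$ is a retract of $\chi_n(H)$.

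There is no real obstacle. The one point needing care is that the relations in the mother group involve only a single element $h$ across two copies; this is exactly why collapsing two copies onto one is compatible with them. The hypothesis $n\ge 3$ just guarantees that $\chi_{n-1}(H)$ has at least two copies, so the collapsing target $H_{n-2}$ exists and the statement concerns genuine weak commutativity groups. By contrast, in $\chi(n,H)$ the extra relations $[D_{ij},L_{kl}]=1$ would be sent to $[D_{i,n-2},L_{n-2,n-2}]$-type or new relations not present in $\chi(n-1,H)$. This explains why the analogous statement there needs extra hypotheses (cf.\ Remark~\ref{rem:embed_fails}).
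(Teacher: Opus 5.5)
Your proof is correct and is essentially the paper's argument: you map $\chi_{n-1}(H)$ to $\chi_n(H)$ on generators, build a retraction $\chi_n(H)\to\chi_{n-1}(H)$ that is the identity on the first $n-1$ copies, and deduce injectivity from the one-sided inverse. The only difference is that the paper sends the last copy $H^{\psi_{n-1}}$ to $e$, whereas you fold it onto $H^{\psi_{n-2}}$ (the device the paper uses for abelian $H$ in Proposition~\ref{prop:embed}(ii)); either choice sends each relator $[h^{\psi_i},h^{\psi_j}]$ to another such relator or to $1$.
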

\begin{proof}
Writing $K_n$ for the subgroup $\langle \, H_{(i,n)} \mid i=0, \ldots, n-2 \, \rangle \leqslant \chi_n(H)$, and, for $h \in H$, $h_{(i,n)}$ for the image of $h^{\psi_i}$ in $\chi_n(H)$, we get an epimorphism
\begin{equation*}
\lambda: \chi_{n-1}(H) = \langle H_{(i, n-1)} \mid i = 0, \ldots n-2 \rangle \to K_n, \; h_{(i, n-1)} \mapsto h_{(i,n)}.
\end{equation*}
On the other hand, the assignment $h_{(i,n)} \mapsto h_{(i, n-1)}$ for $0 \leqslant i \leqslant n-2$ and $h_{(n-1,n)}  \mapsto e$ preserves the defining relations of $\chi_n(H)$ --- the image of each weak commutativity relator $[h^{\psi_i}, h^{\psi_j}]$ is again such a relator, or trivial --- and hence extends to an epimorphism $\mu: \chi_n(H) \to \chi_{n-1}(H)$. Let $\mu'$ be the restriction of $\mu$ to the subgroup $K_n$. Then $\lambda \mu' = \Id$, so $\lambda$ is an isomorphism, giving the desired embedding of $\chi_{n-1}(H)$ into $\chi_n(H).$
\end{proof}

For the class $\chi(n,H)$ the retraction above is \emph{not} available in general: $\mu$ carries the relators $[D_{ij}, L_{k,n-1}]=1$ of $\chi(n,H)$ to relations of the form $[D_{ij}, H^{\psi_k}]=1$, which need not hold in $\chi(n-1,H)$. In fact, the embedding itself may fail (Remark~\ref{rem:embed_fails} below). The next lemma reduces the question to the behaviour of the subgroup $D$.

\begin{lemma}\label{lem:embred}
For $n \geqslant 3$ let $\lambda: \chi(H) \to \langle H, H^{\psi}\rangle \leqslant \chi(n,H)$ be the epimorphism defined by $h \mapsto h$, $h^{\psi}\mapsto h^{\psi}$. Then $\ker\lambda \subseteq D$. Consequently, $\lambda$ is an isomorphism if and only if $\lambda|_{D}: D \to D_{01}$ is injective; for $H$ finite, if and only if $|D_{01}| = |D|$. The same holds for the pair $(i,j)$ in place of $(0,1)$, by the symmetry of the defining presentation under permutations of the copies.
\end{lemma}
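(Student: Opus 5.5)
The idea is to find a homomorphism out of $\chi(n,H)$ that sees the whole quotient $\chi(H)/D$; composing it with $\lambda$ will show that anything killed by $\lambda$ already lies in $D$. We recall from \cite[Section 4]{sidki-wp} that $\chi(H)/D$ is isomorphic to $H \times H$ via $h \mapsto (h,1)$ and $h^\psi \mapsto (1,h)$. So we build an epimorphism $\rho:\chi(n,H)\to H^{\times n}$ (direct power) by sending $h^{\psi_i}$ to the element with $h$ in coordinate $i$ and $1$ elsewhere. Then we check the relations of $\chi(n,H)$:
\begin{itemize}
\item distinct copies commute in $H^{\times n}$, so every $[h^{\psi_i},h^{\psi_j}]$ with $i\neq j$ maps to $1$;
\item consequently each $D_{ij}$ maps to $1$, and so every relation $[D_{ij},L_{kl}]=1$ also holds in the image.
\end{itemize}
Hence $\rho$ is well defined.

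Next let $\pi_{01}:H^{\times n}\to H\times H$ be the projection onto coordinates $0,1$. Then $\lambda\rho\pi_{01}:\chi(H)\to H\times H$ agrees on generators with the natural map whose kernel is $D$. So if $x\in\ker\lambda$, then $x$ lies in the kernel of this composite, which is $D$. This gives $\ker\lambda\subseteq D$.

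The equivalences follow quickly.
\begin{itemize}
\item Since $\lambda(D)=[H,H^\psi]=D_{01}$ and $\ker\lambda\subseteq D$, we have $\ker\lambda=\ker(\lambda|_D)$. So $\lambda$ is injective if and only if $\lambda|_D:D\to D_{01}$ is injective.
\item For $H$ finite, $\chi(H)$ is finite by \cite{sidki-wp}. The map $\lambda|_D$ is onto $D_{01}$, so it is injective exactly when $|D_{01}|=|D|$.
\item For a general pair $(i,j)$, any permutation of $\{0,\dots,n-1\}$ permutes the defining relators of $\chi(n,H)$. The relators $[h^{\psi_i},h^{\psi_j}]$ are permuted among themselves, and the sets $D_{ij}$ and $L_{kl}$ go to the corresponding $D$'s and $L$'s. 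Here one uses $L_{kl}=\langle h^{-\psi_k}h^{\psi_l}\rangle$. So the permutation induces an automorphism of $\chi(n,H)$ carrying $\langle H,H^\psi\rangle$ to $\langle H^{\psi_i},H^{\psi_j}\rangle$, and the argument transfers.
\end{itemize}

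The only point needing care is that $\rho$ respects the relators $[D_{ij},L_{kl}]$. This is immediate, since $D_{ij}$ dies in the direct product. No step looks like a real obstacle.
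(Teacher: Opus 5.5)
Your proposal is correct and follows essentially the paper's argument: you compose $\lambda$ with the coordinate map $\chi(n,H)\to H^n$ and then project onto coordinates $0,1$, which recovers the natural map $\chi(H)\to H\times H$ with kernel $D$, so $\ker\lambda=\ker(\lambda|_D)$. The only difference is that you check the relators for $\rho$ directly, where the paper cites Proposition~\ref{prop:epichi}; your remarks on the finite case (finiteness of $\chi(H)$) and on the symmetry (as in Lemma~\ref{lem:sym_aut}) fill in steps the paper leaves implicit.
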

\begin{proof}
Composing $\lambda$ with the quotient map $\chi(n,H) \twoheadrightarrow \chi(n,H)/C \cong \mathbf{X}_n(H)$ of Proposition~\ref{prop:epichi}, followed by the projection of $\mathbf{X}_n(H)$ onto the coordinates $0$ and $1$, yields the homomorphism $\alpha: \chi(H) \to H \times H$, $h \mapsto (h,1)$, $h^{\psi} \mapsto (1,h)$, whose kernel is $D$ \cite[Section~4.1]{sidki-wp}. Hence $\ker\lambda \subseteq \ker\alpha = D$, and $\ker\lambda = \ker(\lambda|_D)$.
\end{proof}

\begin{proposition} \label{prop:embed}
Let $n \geqslant 3$. Then $\lambda$ is an isomorphism --- that is, $\chi(H)$ embeds into $\chi(n,H)$ --- in each of the following cases:
\begin{itemize}
\item[(i)] $R(H)=1$;
\item[(ii)] $H$ abelian;
\item[(iii)] $H$ perfect.
\end{itemize}
\end{proposition}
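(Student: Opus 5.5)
By Lemma~\ref{lem:embred}, it suffices in each case to show that $\lambda|_D : D \to D_{01}$ is injective. The uniform strategy is to build a group $\Gamma$ generated by $n$ copies of $H$ that satisfies all defining relations of $\chi(n,H)$, and in which the subgroup generated by the first two copies is already isomorphic to $\chi(H)$. The universal property of $\chi(n,H)$ then gives a map $\chi(n,H)\to\Gamma$ whose composite with $\lambda$ is injective on $D$. Since $\chi(n,H)$ is designed so that the ``extra'' copies interact with $D$ through commutation with the $L_{kl}$, natural targets for $\Gamma$ are extensions of $\mathbf{X}_n(H)$ by $D$, or by a quotient of $D$.

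\emph{Case (i), $R(H)=1$.} Recall from \cite{sidki-wp} that $R(H)=[H,L,H^\psi]$ and that $D\cap L$ is central in $\chi(H)$. When $R=1$, the map $\chi(H)\to \chi(H)/R \hookrightarrow H\times H\times H$ given by $h\mapsto(h,h,1)$, $h^\psi\mapsto(1,h,h)$ is injective. I would generalize this diagonal map to $\Gamma\le H^{N}$, assigning to each copy $H^{\psi_i}$ a diagonal pattern in which every pair of copies overlaps in exactly one coordinate. Concretely, index the coordinates by the $1$- and $2$-subsets of $\{0,\dots,n-1\}$, and send $h^{\psi_i}$ to $h$ in coordinate $\{i\}$ and in each coordinate $\{i,j\}$. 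Two copies then overlap only in $\{i,j\}$, so weak commutativity holds. Each $D_{ij}$ lives in coordinate $\{i,j\}$, while $L_{kl}$ involves $h^{-1}$ and $h$ placed in coordinates that differ from $\{i,j\}$ whenever $\{k,l\}\ne\{i,j\}$. The only component that needs checking is coordinate $\{i,j\}$, where $L_{kl}$ contributes either $1$ or a single factor $h^{\pm1}$. This does not commute with $[H,H]$ in general, so the pattern must be adjusted, for instance by adding shared ``$L$'' coordinates as in the $\chi(H)/R$ embedding. I expect this verification to be the main obstacle of the whole proof. The pair $(0,1)$ restricted to the relevant coordinates should reproduce the injective map of $\chi(H)$.

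\emph{Case (ii), $H$ abelian.} Here $D$ is abelian and generated by the $[h,k^\psi]$, and $\chi(H)$ is nilpotent of class at most $2$, or close to it. I would construct $\Gamma$ as a central extension of $H^n$ by $\bigoplus_{i<j} D$, placing one copy of $D$ in each pair-slot. The extension is defined by a bilinear cocycle: $[h^{\psi_i},k^{\psi_j}]$ equals the element $[h,k^\psi]$ in slot $\{i,j\}$ for $i<j$, and $[h^{\psi_i},k^{\psi_i}]=1$. Because $D\cong$ the alternating-type quotient of $H\otimes H$ defined by $[h,h^\psi]=1$, this cocycle is consistent, and every relation of $\chi(n,H)$ holds, since the $D_{ij}$ are central. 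Restricting to copies $0$ and $1$ recovers $\chi(H)$ exactly. If $\chi(H)$ has class $3$, I would instead use the explicit structure of $\chi(H)$ for abelian $H$ from \cite{sidki-wp}.

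\emph{Case (iii), $H$ perfect.} I would take $\Gamma=T_n(\tilde H)$ from Subsection~\ref{ssec:tn}, which by construction contains $T(\tilde H)\cong\chi(H)$ on the first two copies and satisfies the relations of $\chi(n,H)$. Alternatively, I would invoke Theorem~B(i), provided its proof does not depend on this proposition, to get $D_{01}\cong D$ and conclude by the order criterion or by injectivity.
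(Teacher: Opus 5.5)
Case (iii) is fine, but cases (i) and (ii) have genuine gaps.

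\textbf{Case (i).} Your starting claim is false. The map $h\mapsto(h,h,1)$, $h^{\psi}\mapsto(1,h,h)$ has kernel $W=D\cap L$, not $R$. No choice of $0/1$ diagonal patterns in a direct power $H^N$ can repair this. Restricted to $\langle H,H^{\psi}\rangle$, a coordinate carrying only one of the two copies kills $D$, and a coordinate carrying both kills $L$, so $W$ always dies. Concretely, take $H=C_p\times C_p$ with $p$ odd: then $R(H)=1$ while $D\cong C_p$, and since $H^N$ is abelian all of $D$ dies.

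The missing idea is that $R(H)=1$ makes $D$ central in $\chi(H)$. Since $[L,H]=D$ one has $R=[D,H^{\psi}]$, and the swap automorphism then gives $[D,H]=1$ as well. With this, $\Gamma=\chi(H)$ itself works. Sending $h^{\psi_k}\mapsto e$ for $k\geq 2$ maps each mixed relator either to $e$ or to some $[d,z^{\pm1}]$ or $[d,z^{\pm\psi}]$ with $d\in D$. All of these are trivial, so the map is a retraction onto $\langle H,H^{\psi}\rangle$.

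\textbf{Case (ii).} A central extension of $\mathbf{X}_n(H)$ by $\bigoplus D$ can restrict to $\chi(H)$ on copies $0,1$ only if $D$ is central in $\chi(H)$. By the swap argument, that happens exactly when $R(H)=1$, which is case (i) again. The genuinely new abelian cases are $2$-groups with $R(H)\neq1$, and there $\chi(H)$ is not ``class $2$ or close to it.'' For $H=C_2^3$ one has $|D|=2^4>2^3=|H\wedge H|$. A central $D$ would be an image of $H\wedge H$ via $h\wedge k\mapsto[h,k^{\psi}]$, so $D$ is not central. For $H=C_2^k$ the class is even unbounded in $k$, since $|D|=2^{2^k-1-k}$. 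Deferring to ``the explicit structure'' leaves all these cases unproved.

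The paper handles every abelian $H$ at once by folding, $h^{\psi_k}\mapsto h$ for $k\geq 2$. Because $H$ is abelian, $D_{ij}\mapsto[H,H]=1$ when $1\notin\{i,j\}$, and $D_{ij}\mapsto D$ otherwise. Every $L_{kl}$ maps into $L$. So all mixed relators die by $[D,L]=1$ in $\chi(H)$, and again you get a retraction. In both (i) and (ii), your own framework succeeds once you take $\Gamma=\chi(H)$ instead of a diagonal or central-extension model.

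\textbf{Case (iii).} This is close to the paper. The paper compares orders along $D\twoheadrightarrow D_{01}\twoheadrightarrow\overline{D}_{01}\cong\tilde H\cong D$. You instead use directly that the first two copies of $T_n(\tilde H)$ generate a copy of $T(\tilde H)\cong\chi(H)$. This needs one check: an element $(z_0\cdots z_{n-1},z_0,\dots,z_{n-1})$ of $Z_0\cdots Z_{n-1}$ with trivial coordinates $4,\dots,n+1$ lies in $Z_0Z_1$. Your route also covers infinite perfect $H$ with no extra comment.
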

\begin{proof}
(i). Recall that $R(H)=[H,L,H^{\psi}]$. Since $[L,H]=D$ \cite[Section~4.1]{sidki-wp}, we have $R(H)=[D,H^{\psi}]$; applying the swap automorphism of $\chi(H)$ (which interchanges $H$ and $H^{\psi}$ and fixes $D$) we get $[D,H] \cong [D,H^{\psi}]$, so the hypothesis forces $[D,H]=[D,H^{\psi}]=1$. Now the assignment $h \mapsto h$, $h^{\psi}\mapsto h^{\psi}$, $h^{\psi_k}\mapsto e$ ($2 \leq k \leq n-1$) sends every weak commutativity relator of $\chi(n,H)$ to such a relator or to the identity, and each relator $[[x^{\psi_i},y^{\psi_j}],\, z^{-\psi_k}z^{\psi_l}]$ to a commutator of one of the forms $[d,z^{\pm1}]$, $[d,z^{\pm\psi}]$, $[d,l]$ (with $d \in D$, $l \in L$), or to the identity; all of these are trivial in $\chi(H)$, by the first paragraph and by $[D,L]=1$ \cite[Proposition~4.1.7]{sidki-wp}. Hence it extends to an epimorphism $\mu:\chi(n,H)\to\chi(H)$ with $\mu\lambda=\Id_{\chi(H)}$, and $\lambda$ is injective.

(ii). For $H$ abelian consider instead the \emph{folding} $h \mapsto h$, $h^{\psi}\mapsto h^{\psi}$, $h^{\psi_k}\mapsto h$ ($2 \leq k \leq n-1$). Weak commutativity relators are sent to $[h,h]$, $[h,h^{\psi}]$ or $[h^{\psi},h]$, all trivial. As for the mixed relators, every copy $H^{\psi_i}$ with $i\neq1$ is sent to $H$, so each $D_{ij}$ with $1\notin\{i,j\}$ is sent to $[H,H]=1$ ($H$ abelian), each $D_{1j}$ is sent to $[H^{\psi},H]=D$, and each $L_{kl}$ is sent into $L^{\pm1}\cup\{e\}$; the images of the mixed relators are therefore trivial, by $[D,L]=1$. This again yields a retraction $\mu'\lambda=\Id$.

(iii). By Lemma~\ref{lem:embred} it suffices to show that $\lambda|_D$ is injective. The epimorphism $\lambda_{\tilde{H}}$ of Proposition~\ref{prop:epichintn} carries $D_{01}$ onto $\overline{D}_{01}$, and for $H$ perfect one has $\overline{D}_{01}\cong D$ (Remark~\ref{rem:epialfa} and \cite[Section~4.4]{sidki-wp}; see the proof of Theorem~\hyperref[th:thmB]{B}). Thus the composite $D \to D_{01} \to \overline{D}_{01}$ is a surjection between groups of the same finite order, forcing $\lambda|_D$ to be injective (for infinite perfect $H$ the same argument applies, as the composite is an isomorphism).
\end{proof}

\begin{remark}\label{rem:embed_fails}
The hypotheses in Proposition~\ref{prop:embed} cannot be dropped: \emph{the embedding fails in general}. Computations with GAP \cite{gap} show that for $H=(C_4\times C_4)\rtimes C_2$ (\texttt{SmallGroup(32,34)}) the subgroup $\langle H,H^{\psi}\rangle$ of $\chi(3,H)$ has order $2^{17}$, whereas $|\chi(H)|=2^{18}$; thus $\ker\lambda\cong C_2$ and $D_{01}$ is a proper quotient of $D$. Similarly, for $H=C_3\times E$, where $E$ is the extraspecial group of order $27$ and exponent $3$ (\texttt{SmallGroup(81,12)}), one finds $\ker\lambda\cong C_3$. Two features are noteworthy: the phenomenon occurs for odd primes as well; and, since $\chi(E)$ \emph{does} embed into $\chi(n,E)$, the embedding property is not inherited by direct products. In particular, in contrast with the mother group $\chi_n(H)$, the relations $[D_{ij},L_{kl}]=1$ may impose new relations upon the subgroup $\langle H,H^{\psi}\rangle$.
\end{remark}

\begin{remark}\label{rem:S3_noepi}
Even when the embedding holds, no retraction --- indeed, no epimorphism at all --- from $\chi(n,H)$ onto $\chi(H)$ need exist: for $H=S_3$, GAP shows that $|\chi(3,S_3)|=648$, that $\langle H,H^{\psi}\rangle\cong\chi(S_3)$ (of order $108$), and that $\chi(3,S_3)$ has no normal subgroup of index $108$. Incidentally, in $\chi(3,S_3)$ the three subgroups $D_{01},D_{02},D_{12}$ coincide, so that the bound of Theorem~\hyperref[th:thmA]{A} is not attained in this case.
\end{remark}

    Since $G(2) = \chi(H)$ and the defining relations of $\chi(H)$ hold in $\langle H^{\psi_i}, H^{\psi_j}\rangle \leqslant G(n)$, we immediately obtain the
    \begin{corollary} \label{cor:embed}
  For every group $H$, all $n \geqslant 2$ and each pair $0\leq i<j\leq n-1$, there is an epimorphism $\lambda_{ij}:\chi(H)\twoheadrightarrow\langle H^{\psi_i},H^{\psi_j}\rangle\leqslant G(n)$, $h \mapsto h^{\psi_i}$, $h^{\psi} \mapsto h^{\psi_j}$, which carries $D$ onto $D_{ij}$. In particular, for $H$ finite, $|D_{ij}|$ divides $|D|$, and $D_{ij}$ inherits solvability, nilpotency and every law holding in $D$. Moreover, $\lambda_{ij}$ is an isomorphism whenever $G(n)=\chi_n(H)$ (Proposition~\ref{prop:embed_mother}), and whenever $G(n)=\chi(n,H)$ and $H$ is abelian, perfect, or satisfies $R(H)=1$ (Proposition~\ref{prop:embed}).
    \end{corollary}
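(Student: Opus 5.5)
The plan is to obtain the epimorphism directly from von Dyck's theorem and then read off the consequences. Fix $0\leq i<j\leq n-1$. Consider the assignment $h\mapsto h^{\psi_i}$, $h^{\psi}\mapsto h^{\psi_j}$ from the generators of $\chi(H)$ into $G(n)$. The restrictions to $H$ and to $H^{\psi}$ are homomorphisms, since they are the inclusions of the copies $H_{(i,n)}$ and $H_{(j,n)}$ composed with the isomorphisms $\psi_i$ and $\psi_j$. The only defining relators of $\chi(H)$ beyond those of the two copies are $[h,h^{\psi}]$. Their images $[h^{\psi_i},h^{\psi_j}]$ are defining relators of $\chi_n(H)$, and hence also hold in its quotient $\chi(n,H)$.

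So the assignment extends to a homomorphism $\lambda_{ij}$. Its image is generated by $H^{\psi_i}$ and $H^{\psi_j}$, so it is $\langle H^{\psi_i},H^{\psi_j}\rangle$. Since $D=[H,H^{\psi}]$ is generated by the commutators $[x,y^{\psi}]$, $\lambda_{ij}$ maps $D$ onto the subgroup generated by the $[x^{\psi_i},y^{\psi_j}]$, which is $D_{ij}$.

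The consequences follow at once. When $H$ is finite, $D$ is finite (it is a subgroup of the finite group $\chi(H)$ \cite{sidki-wp}), and $D_{ij}\cong D/(\ker\lambda_{ij}\cap D)$. Hence $|D_{ij}|$ divides $|D|$. Solvability, nilpotency and laws pass to homomorphic images, so $D_{ij}$ inherits them from $D$.

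For the isomorphism statements we reduce to the earlier propositions. When $G(n)=\chi_n(H)$, we first apply a permutation of the copies; this is an automorphism of $\chi_n(H)$ because the presentation is symmetric. This reduces to $(i,j)=(0,1)$. Iterating Proposition~\ref{prop:embed_mother} then embeds $\chi_2(H)=\chi(H)$ onto $\langle H,H^{\psi}\rangle$ via exactly this map. When $G(n)=\chi(n,H)$ and $H$ is abelian, perfect, or satisfies $R(H)=1$, the same permutation symmetry (noted in Lemma~\ref{lem:embred}) reduces to $(0,1)$, and Proposition~\ref{prop:embed} applies.

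The only point requiring care is the symmetry claim. A permutation of indices must map the extra relators $[D_{ij},L_{kl}]$ to relators of the same form. It does, since $L_{kl}$ is generated by the elements $h^{-\psi_k}h^{\psi_l}$, which are permuted among themselves. For $n=2$ the statement is trivial, because $\lambda_{01}$ is the identity.
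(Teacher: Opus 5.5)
Your proof is correct and follows the paper's route. The paper also obtains $\lambda_{ij}$ by observing that the defining relators $[h,h^{\psi}]$ of $\chi(H)$ hold in $\langle H^{\psi_i},H^{\psi_j}\rangle\leqslant G(n)$, and it defers the isomorphism clauses to Propositions~\ref{prop:embed_mother} and~\ref{prop:embed}. Your reduction of an arbitrary pair $(i,j)$ to $(0,1)$ via the permutation automorphisms (Lemma~\ref{lem:sym_aut} and its verbatim analogue for $\chi_n(H)$) spells out a step the paper leaves implicit.
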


We close this subsection by recording the behaviour of the operator $\chi$ under iteration. Recall from \eqref{def:iter_grp} that $\chi^{0}(H)=H$ and $\chi^{n}(H)=\chi\big(\chi^{n-1}(H)\big)$ for $n\geq 1$; we call $\chi^{n}(H)$ the \emph{$n$-th iterated weak commutativity group} of $H$. Since each application of $\chi$ doubles the number of copies of the argument, $\chi^{n}(H)$ contains $2^{n}$ isomorphic copies of $H$.

\begin{proposition}\label{prop:iter-embed}
For every group $K$ the natural map $\iota_K: K\to\chi(K)$, $k\mapsto k$, is a monomorphism. Consequently, $\chi^{n-1}(H)\hookrightarrow\chi^{n}(H)$ for every group $H$ and every $n\geq 1$.
\end{proposition}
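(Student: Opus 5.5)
The plan is to build a retraction of $\chi(K)$ onto $K$, exactly as in the retraction arguments of Propositions~\ref{prop:embed_mother} and~\ref{prop:embed}. Consider the assignment $\mu: \chi(K)\to K$ defined on generators by $k\mapsto k$ and $k^{\psi}\mapsto e$ for all $k\in K$. We check it on the defining relators of $\chi(K)$, which come in three kinds.

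\begin{itemize}
\item[(a)] The relations of $K$ are sent to the same relations in $K$, so they hold.
\item[(b)] The relations of $K^{\psi}$ are sent to the identity.
\item[(c)] Each weak commutativity relator $[k,k^{\psi}]$ is sent to $[k,e]=e$.
\end{itemize}

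By von Dyck's theorem, $\mu$ therefore extends to a homomorphism $\mu:\chi(K)\to K$. The composite $\mu\circ\iota_K$ is the identity on $K$. Hence $\iota_K$ is injective, and the image of $K$ in $\chi(K)$ is a retract.

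An alternative is to map $k^{\psi}\mapsto k$, which also kills the relators, since $[k,k]=e$. Either choice works; the trivial map is simplest.

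For the consequence, apply the first part with $K=\chi^{n-1}(H)$. This gives $\chi^{n-1}(H)\hookrightarrow\chi(\chi^{n-1}(H))=\chi^{n}(H)$ by \eqref{def:iter_grp}. Composing these embeddings shows that each $\chi^{m}(H)$ embeds in $\chi^{n}(H)$ for $m\le n$. In particular the $2^n$ copies of $H$ are genuine isomorphic copies.

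No step is a real obstacle. The only point needing care is that the presentation \eqref{eq:wp} is taken over full copies of $K$, so the relations of $K$ itself are part of the defining relators. This is what makes the map on generators well defined, and it needs no finiteness or other hypothesis on $K$.
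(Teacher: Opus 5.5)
Your proof is correct and is essentially the paper's own argument. The assignment $k\mapsto k$, $k^{\psi}\mapsto 1$ extends to a retraction $\rho_K:\chi(K)\to K$ with $\rho_K\,\iota_K=\mathrm{id}_K$, and one then takes $K=\chi^{n-1}(H)$.
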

\begin{proof}
The assignment $k\mapsto k$, $k^{\psi}\mapsto 1$ preserves the defining relations of $\chi(K)$, since $[k,k^{\psi}]\mapsto[k,1]=1$; hence it extends to a homomorphism $\rho_K:\chi(K)\to K$. As $\rho_K\,\iota_K=\Id_K$, the map $\iota_K$ is injective. Taking $K=\chi^{n-1}(H)$ yields $\chi^{n-1}(H)\hookrightarrow\chi\big(\chi^{n-1}(H)\big)=\chi^{n}(H)$.
\end{proof}

The following properties are preserved by $\chi$, and hence inherited by every iterate $\chi^{n}(H)$:
\begin{itemize}
\item[(i)] finiteness \cite[Corollary 3.6]{sidki-wp};
\item[(ii)] being a finite $p$-group \cite[Corollary 4.3.8]{sidki-wp};
\item[(iii)] solvability \cite[Corollary 4.1.8]{sidki-wp};
\item[(iv)] being a finite nilpotent group \cite[Corollary 4.3.9]{sidki-wp};
\item[(v)] local nilpotency \cite[Lemma 2.5]{grs};
\item[(vi)] being polycyclic, or polycyclic-by-finite \cite{LO};
\item[(vii)] finite presentability \cite{BD}.
\end{itemize}

\subsection{Diagonal Representations} \label{ssec:diag}

The first access to the structure of $\chi_2(H) = \langle H, H^\psi \rangle$ in \cite{sidki-wp} was obtained through its \emph{diagonal representation}
\[
    \delta_2 : \chi_2(H) \longrightarrow \mathbf{X}_3(H),
\]
where $\mathbf{X}_k(H)$ denotes the direct power $H \times \cdots \times H$ of $k$ copies of $H$, and $\delta_2$ is defined by

\[
    h \longmapsto (h, e, h), \qquad h^\psi \longmapsto (e, h, h)
\]
for all $h \in H$, which is codable as the $2 \times 3$ matrix
\[
   M_2:= \begin{pmatrix} 1 & 0 & 1 \\ 0 & 1 & 1 \end{pmatrix}.
\]
Denote the image of $\chi_2(H)$ under $\delta_2$ by $T_2(H)$. Then it follows simply that the derived subgroups of $T_2(H)$ and $H$ satisfy the relation
\[
    T_2(H)' = \mathbf{X}_3(H').
\]
A generalization of the above formula is obtained in Appendix~\ref{app:hamming} for $\chi_n(H)$ ($n \geq 2$), where a diagonal representation
\[
    \delta_n : \chi_n(H) \longrightarrow \mathbf{X}_{2^n-1}(H)
\]
is defined using the Hamming error-correcting code given by an $\bigl(n \times (2^n-1)\bigr)$ matrix over $\mathbb{Z}_2$.  For instance, for $n=3$ the representation $\delta_3$ is codable as the $3 \times 7$ matrix 
\[ M_3 := \begin{pmatrix} 1 & 0 & 1 & 0 & 1 & 0 & 1 \\ 0 & 1 & 1 & 0 & 0 & 1 & 1 \\
0 & 0 & 0 & 1 & 1 & 1 & 1
\end{pmatrix}.
\] 
This produces the relation
\[
    \gamma_n\bigl(T_n(H)\bigr) = \mathbf{X}_{2^n-1}\bigl(\gamma_n(H)\bigr),
\]
relating the $n$-th terms of the lower central series of $T_n(H)$ and of $H$, where $T_n(H)$ is the image of $\chi_n(H)$ under $\delta_n$.

\subsection{The Group $T_n(\tilde{H})$} \label{ssec:tn}

We define an $n$-version of the group $T(\tilde{H})$ introduced in \cite[Subsection 2.3]{sidki-wp}. To this end, let $H$ be any group and let $\tilde{H}$ be a stem extension of $H$, in the sense that there exists a subgroup $Z \leq \tilde{H}^{\prime } \cap Z(\tilde{H})$ such that $\tilde{H}/Z \cong H$.

Set $\tilde{T_n} = \tilde{H}^{n+1}$ and $(a)_i=(1,\ldots,1, a, 1,\ldots,1) \in 
\tilde{T_n}$, where $a \in \tilde{H}$ occurs at the $i$-th coordinate, $1 \leq i \leq n + 1$. 

Now we define:   
\[
\phi=(2,3, \ldots, n+1) \in S_{n+1}; 
\]
\[
\tilde{\varphi} \in \text{Aut}(\tilde{T}_n) \; \text{such that} \; \tilde{\varphi}: (a)_i \mapsto (a)_{\phi(i)}, \,1 \leq i \leq n+1; 
\]
\[
\tilde{H_0} = \left< (a,a,1,1,\ldots,1) \mid \; \; a \in \tilde{H}
\;\right>, \quad \tilde{H_i}={\tilde{H_0}}^{\tilde{\varphi}^{i}}, \; 1 \leq i
\leq n-1; 
\]
\[
Z_0=\langle (z,z,1,1,\ldots,1) \mid \; \;z \in Z \;\rangle, \quad Z_i={Z_0}%
^{{\tilde{\varphi}}^{i}}, \; 1 \leq i \leq n-1;
\]
\[
\tilde{G} = \left< \tilde{H}_0, \; \tilde{H}_1, \ldots , \tilde{H}_{n-1}
\right>;
\]
\[
T_n(\tilde{H})=\dfrac{\tilde{G}}{Z_0Z_1\cdots Z_{n-1}};
\]
\begin{gather*}
H_0=\frac{\tilde{H_0}Z_0Z_1\cdots Z_{n-1}}{Z_0Z_1 \cdots Z_{n-1}}, \quad
H_1=\frac{\tilde{H_1}Z_0Z_1\cdots Z_{n-1}}{Z_0Z_1 \cdots Z_{n-1}}, \quad
\ldots, \\
H_{n-1}=\frac{\tilde{H}_{n-1}Z_0Z_1\cdots Z_{n-1}}{Z_0Z_1 \cdots Z_{n-1}}.
\end{gather*}

Thus we have $T_n(\tilde{H})= \langle H_0,\, H_1, \ldots, \; H_{n-1} \rangle$ and  $H
\cong H_0 \cong H_1 \cong \ldots \cong H_{n-1}$, as $H_i=\frac{\tilde{H_i}%
Z_0 Z_1\cdots Z_{n-1}}{Z_0 Z_1 \cdots Z_{n-1}} \cong \frac{\tilde{H_i}}{\tilde{H_i} \cap Z_0 Z_1
\cdots Z_{n-1}} = \frac{\tilde{H_i}}{Z_i}\cong H$ for all  $0 \leq i \leq n-1$.  Define  $\tilde{D}_{ij}=[%
\tilde{H_i},\tilde{H_j}] \quad \text{and} \quad \tilde{L}_{ij}=[\tilde{H_i}, 
\tilde{\varphi}^{j-i}]$, subgroups of $\tilde{G}$, and let $\overline{D}_{ij}$, $%
\overline{L}_{ij}$  be their respective images in $G\;(=T_n(\tilde{H}))$, for all $%
0\leq i <j \leq n-1$.  It is easy to see that $\tilde{G}$ and $Z_0Z_1\cdots Z_{n-1}$ are $\tilde{\varphi}$-invariant and that $\tilde{\varphi}$ induces an automorphism $\varphi$ on $T_n(\tilde{H})$ such that
\begin{equation*}
\varphi: Z_0Z_1 \cdots Z_{n-1}(a)_i \mapsto Z_0Z_1 \cdots Z_{n-1}(a)_{\phi(i)} \quad \text{for all} \quad a \in \tilde{H}.
\end{equation*}
 Therefore,   
\begin{equation*}
H_i=H_0^{\varphi^{i}}, \; i=0, \ldots, n-1,  \quad \text{and} \quad T_n(\tilde{H})=\left<H_0,
H_0^{\varphi}, \ldots, H_0^{\varphi^{n-1}}\right>.
\end{equation*}

\begin{remark} \label{rem:epialfa} With the above settings, the following assertions hold:
\hskip -10pt 
\begin{itemize}
\item[(i)] For all $0 \leq i< j \leq n-1$, it follows that 
\begin{equation*}
\tilde{D}_{ij}, \; \tilde{L}_{ij} \, \triangleleft \tilde{T_n};
\end{equation*}
\begin{equation*}
\tilde{D}_{ij}= \left< ([a,b],1, \ldots,1) , \; a, \, b \in \tilde{H}
\right> =\tilde{H^{\prime }}\times 1\times \ldots \times1;
\end{equation*}
\begin{equation*}
\tilde{L}_{ij}=\left<(a^{-1})_i (a)_j \; \mid \; a \in \tilde{H} \right>;
\end{equation*}
\begin{equation*}
\tilde{D}_{ij} \cap Z_0Z_1 \ldots Z_{n-1} =1, \; \text{so that} \; \tilde{D}_{ij}
\cong \overline{D}_{ij}.
\end{equation*}
\item[(ii)] If $H$ is a perfect group, then  
\begin{equation*}
\tilde{D}_{ij}= \tilde{H}\times 1 \times \ldots \times 1. 
\end{equation*}%
\item[(iii)] For $\bar{x} = Z_0Z_1\cdots Z_{n-1}(x_0,x_1,\ldots,x_{n-1}) \in T_n({\tilde{H}})$ define
\begin{equation*}
\alpha_n: T_n(\tilde{H}) \rightarrow H^n, \bar{x} \mapsto (\bar{x}_0,\bar{x}_1, \ldots, \bar{x}_{n-1}).
\end{equation*}
Then $\alpha_n$ is an epimorphism such that
\begin{equation*}
\ker(\alpha_n)=\overline{D}_{ij}.
\end{equation*}
\end{itemize}
\end{remark}

\section{The Group \protect{$\chi(n,H)$}} \label{sec:chin}

We begin by introducing some preliminary considerations on the group $\chi(n,H)$.  
In particular, we prove basic structural properties that will be useful in the sequel.  
\begin{proposition}
\label{prop:epichi} There exists an epimorphism $\delta: \chi(n,H)
 \rightarrow H^n$ such that $\displaystyle \ker(\delta)=\prod_{0\leq i<j\leq n-1}{D_{ij}}$.
\end{proposition}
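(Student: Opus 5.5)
Define $\delta$ on generators by $h^{\psi_i}\mapsto (h)_i$, where $(h)_i\in H^n$ has $h$ in coordinate $i$ and $e$ elsewhere. Each $H^{\psi_i}$ then maps isomorphically onto the $i$-th coordinate factor, so $\delta$ is onto. We check the defining relations. The weak commutativity relator $[h^{\psi_i},h^{\psi_j}]$ with $i\neq j$ goes to $[(h)_i,(h)_j]=1$, since distinct coordinates commute. Likewise each $D_{ij}$ with $i\ne j$ maps to $[(H)_i,(H)_j]=1$. Hence every relator $[D_{ij},L_{kl}]$ maps to $1$ as well. This gives a well-defined epimorphism $\delta:\chi(n,H)\to H^n$, and clearly $N:=\prod_{i<j}D_{ij}\subseteq\ker\delta$.

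For the reverse inclusion we first show that $N$ is normal. Each $D_{ij}=[H^{\psi_i},H^{\psi_j}]$ is normalized by $H^{\psi_i}$ and $H^{\psi_j}$, by the standard identities $[x,y]^z=[xz,y][z,y]^{-1}$ and their analogues. It remains to see that $D_{ij}$ is normalized by $H^{\psi_k}$ for $k\notin\{i,j\}$. Here we use the defining relation $[D_{ij},L_{ik}]=1$: for $h\in H$ we have $h^{\psi_k}=h^{\psi_i}\cdot(h^{-\psi_i}h^{\psi_k})$, and the second factor lies in $L_{ik}$ and centralizes $D_{ij}$. Therefore conjugation by $h^{\psi_k}$ acts on $D_{ij}$ as conjugation by $h^{\psi_i}$, which preserves $D_{ij}$. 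So every $D_{ij}$ is normal in $\chi(n,H)$, and hence so is $N$.

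In $\bar G=\chi(n,H)/N$ the images of $H^{\psi_0},\dots,H^{\psi_{n-1}}$ pairwise commute. Thus $\bar G$ is a homomorphic image of the direct product $H^n$ via $(h_0,\dots,h_{n-1})\mapsto \bar h_0^{\psi_0}\cdots\bar h_{n-1}^{\psi_{n-1}}$; this map is a homomorphism precisely because the factors commute. Composing with the map $\bar G\to H^n$ induced by $\delta$ gives the identity on $H^n$. Hence the map $H^n\to\bar G$ is injective, so it is an isomorphism, the induced map is an isomorphism too, and $\ker\delta=N$. The one nontrivial step is the normality of $N$, specifically that $H^{\psi_k}$ normalizes $D_{ij}$ for $k\notin\{i,j\}$. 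This is exactly where the extra relations of $\chi(n,H)$ are used. Even without them, the normal closure of the $D_{ij}$ would still serve as the kernel, so at worst one could state the result with normal closures.
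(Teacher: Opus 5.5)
Your proof is correct and follows the paper's argument essentially step by step. You define $\delta$ coordinatewise, use the relation $[D_{ij},L_{ik}]=1$ to show that $H^{\psi_k}$ acts on $D_{ij}$ as $H^{\psi_i}$ does (so each $D_{ij}$, and hence $\prod D_{ij}$, is normal), and then invert the induced map $\bar\delta$ via the natural epimorphism $H^n\to\chi(n,H)/\prod D_{ij}$.
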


\begin{proof}
It is clear that the application  
\begin{eqnarray*}
&\chi(n,H) \rightarrow H^n & \\
& h \mapsto (h,1,\ldots,1) & \\
& h^{\psi} \mapsto (1,h, \ldots,1) & \\
& \vdots & \\
& h^{\psi_{n-1}} \mapsto (1,1,\ldots,h) &
\end{eqnarray*}
extends to an epimorphism $\delta: \chi(n,H) \rightarrow H^n$, as it
preserves the defining relations of $\chi(n,H)$. 
The following equalities hold: 
\begin{eqnarray*}
[h_1^{{\psi}_i},h_2^{{\psi}_j}]^{h_3^{{\psi}_k}}&=&[h_1^{{\psi}%
_i},h_2^{{\psi}_j}]^{h_3^{{\psi}_i}} \qquad (\text{since} \,[D_{ij},
L_{ki}]=1) \\
&=&[(h_1h_3)^{{\psi}_i},h_2^{{\psi}_j}][h_3^{{\psi}_i},h_2^{{\psi}_j}]^{-1}.
\end{eqnarray*}
So, $D_{ij} \triangleleft \chi(n,H)$ for all $i, j$ with $0 \leq i< j \leq n-1$, whence the normal subgroup $C :=\langle D_{ij}; 0\leq i < j \leq n-1 \rangle$ can be expressed as a product,
$$C=\displaystyle \prod_{0 \leq i <j \leq n-1} \!\! \!\! {D_{ij}}.$$  
Since $C \subseteq \ker(\delta)$, it follows that $\delta$ induces a homomorphism $\bar{\delta}$ given by  
\begin{eqnarray*}
&\bar{\delta}: \frac{\chi(n,H)}{C} \rightarrow H^n & \\
& Ch \mapsto (h,1,\ldots,1) & \\
& Ch^{\psi} \mapsto (1,h, \ldots,1) & \\
& \vdots & \\
& Ch^{\psi_{n-1}} \mapsto (1,1,\ldots,h) &
\end{eqnarray*}
On the other hand the application  
\begin{eqnarray*}
&H^n \rightarrow \frac{\chi(n,H)}{C}& \\
&(h,1, \ldots, 1) \mapsto Ch& \\
&(1,h, \ldots, 1) \mapsto Ch^{\psi}& \\
& \vdots & \\
&(1,1, \ldots, h) \mapsto Ch^{\psi_{n-1}},& \\
\end{eqnarray*}
preserves the defining relations of the group $H^n$, so that it can be
extended to the epimorphism $\alpha: H^n \rightarrow \frac{\chi(n,H)}{C} $.
As the composite map of these two last epimorphisms is the identity isomorphism, $\alpha \bar{%
\delta}= \Id$, the result follows. 
\end{proof}

As an immediate consequence of Corollary~\ref{cor:embed} we have the 
\begin{corollary} \label{cor:chin_sol}  
If $H$ is a solvable group, then so is $\chi(n,H)$.
\end{corollary}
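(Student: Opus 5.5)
The plan is to use the epimorphism $\delta:\chi(n,H)\to H^n$ of Proposition~\ref{prop:epichi}, whose kernel is the normal subgroup $C=\prod_{0\le i<j\le n-1}D_{ij}$. This gives a short exact sequence
\[
1\longrightarrow C\longrightarrow \chi(n,H)\longrightarrow H^n\longrightarrow 1 .
\]
Since the class of solvable groups is closed under extensions, it suffices to show two things: that $H^n$ is solvable and that $C$ is solvable. The quotient $H^n$ is a finite direct power of the solvable group $H$, so it is solvable, with derived length equal to that of $H$.

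For the kernel, I would first treat each factor $D_{ij}$ separately. Corollary~\ref{cor:embed} provides an epimorphism $\lambda_{ij}:\chi(H)\twoheadrightarrow\langle H^{\psi_i},H^{\psi_j}\rangle$ carrying $D$ onto $D_{ij}$. By \cite[Corollary 4.1.8]{sidki-wp}, $\chi(H)$ is solvable whenever $H$ is. Hence $D\le\chi(H)$ is solvable, and so is its homomorphic image $D_{ij}$; this is exactly the inheritance of solvability recorded in Corollary~\ref{cor:embed}.

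The remaining step is to pass from the factors to their product. In the proof of Proposition~\ref{prop:epichi} each $D_{ij}$ was shown to be normal in $\chi(n,H)$. A product of finitely many normal solvable subgroups is solvable, by induction on the number of factors: if $N,M\trianglelefteq G$ are solvable, then $NM/N\cong M/(M\cap N)$ is solvable, so $NM$ is solvable as an extension. Applying this to the $\binom n2$ normal subgroups $D_{ij}$ shows that $C$ is solvable, and the extension argument finishes the proof. There is no real obstacle here. The only point needing care is the normality of the $D_{ij}$, which relies on the relations $[D_{ij},L_{ki}]=1$ and is already established, so the corollary is indeed immediate.
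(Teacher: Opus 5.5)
Your proof is correct and follows the paper's argument: solvability of $D\le\chi(H)$ from \cite[Corollary 4.1.8]{sidki-wp}, passage to each $D_{ij}$ via the epimorphisms of Corollary~\ref{cor:embed}, then $C=\prod D_{ij}$, and finally the extension given by $\delta$ of Proposition~\ref{prop:epichi}. You also make explicit the step the paper leaves implicit, namely that $C$ is solvable because it is a finite product of normal solvable subgroups.
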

\begin{proof}
 From \cite[Corollary 4.1.8]{sidki-wp} we know that $\chi(H)$ is solvable, since $H$ is solvable, and so is its subgroup $D = [H, H^\psi]$. Consequently, due to Corollary~\ref{cor:embed} each subgroup $D_{ij} \subseteq \chi(n,H)$, being an epimorphic image of $D$, is solvable for all pairs $(i, j), \, 0 \leq i < j \leq n-1$, whence  
$C = \prod_{0\leq i<j\leq n-1}{D_{ij}}$ is also solvable. Therefore, by Proposition~\ref{prop:epichi} we get that $\chi(n,H)$ is a solvable group. 
\end{proof}

\subsection{Bounding the order of the group \texorpdfstring{$\chi(n,H)$}{chi(n,H)}} 

As mentioned in the Introduction, in \cite{oliveira-thesis} it was obtained an upper bound for the order of the group $\chi(n,H)$ when $H$ is a finite abelian group. We are now ready to prove Theorem~\hyperref[th:thmA]{A}, which extends this result to the more general case where $H$ is any finite group. 

\begin{theorem} \label{th:upperbound}  If $H$ is a finite group, then $|\chi(n,H)|$ divides 
$|\chi(H)|^{\binom{n}{2}} |H|^{n-2\binom{n}{2}}$.
\end{theorem}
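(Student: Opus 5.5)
Since $|\chi(H)| = |D|\,|H|^2$ (because $\chi(H)/D \cong H\times H$ via the map $\alpha$ recalled in the proof of Lemma~\ref{lem:embred}), the target quantity is
\[
|\chi(H)|^{\binom{n}{2}}|H|^{n-2\binom{n}{2}} = |D|^{\binom{n}{2}}\,|H|^{n}.
\]
The plan is therefore to show that $|\chi(n,H)|$ divides $|D|^{\binom n2}|H|^n$. By Proposition~\ref{prop:epichi} we have $|\chi(n,H)| = |H|^n\,|C|$ with $C=\prod_{i<j}D_{ij}$. So it suffices to prove that $|C|$ divides $\prod_{i<j}|D_{ij}|$, and then to use Corollary~\ref{cor:embed}, which gives that each $|D_{ij}|$ divides $|D|$.

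For the divisibility of $|C|$, I would use the fact, shown in the proof of Proposition~\ref{prop:epichi}, that every $D_{ij}$ is normal in $\chi(n,H)$. Order the pairs $(i,j)$ in some fixed way as $P_1,\dots,P_N$ with $N=\binom n2$, and set $C_k = D_{P_1}\cdots D_{P_k}$, which is a normal subgroup. Then
\[
|C_k| = |C_{k-1}|\,|D_{P_k}|/|C_{k-1}\cap D_{P_k}|.
\]
Hence $|C_k|$ divides $|C_{k-1}|\,|D_{P_k}|$, because $|C_k|/|C_{k-1}| = [D_{P_k}:C_{k-1}\cap D_{P_k}]$ is a divisor of $|D_{P_k}|$. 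By induction, $|C|$ divides $\prod_k |D_{P_k}|$, and this in turn divides $|D|^{N}$.

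Combining the steps, $|\chi(n,H)| = |H|^n|C|$ divides $|H|^n|D|^{\binom n2} = |\chi(H)|^{\binom n2}|H|^{n-2\binom n2}$. The only points needing care are the normality of the $D_{ij}$ (where the relations $[D_{ij},L_{kl}]=1$ are used, as already done in Proposition~\ref{prop:epichi}) and the identity $|\chi(H)|=|D||H|^2$ from \cite{sidki-wp}. Neither is a real obstacle. The exponent $n-2\binom n2$ may be negative, but the displayed rewriting shows that the expression is still an integer.
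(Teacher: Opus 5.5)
Your proposal is correct and follows the paper's proof. Both arguments combine $|\chi(n,H)|=|C|\,|H|^n$ from Proposition~\ref{prop:epichi}, the divisibility of $|D|$ by $|D_{ij}|$ from Corollary~\ref{cor:embed}, and $|\chi(H)|=|D|\,|H|^2$; your inductive product-formula argument, which uses the normality of the $D_{ij}$, simply makes explicit the step ``$|C|$ divides $|D|^{\binom{n}{2}}$ since $C=\prod D_{ij}$'' that the paper states without justification.
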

\begin{proof}
By Corollary~\ref{cor:embed}, $|D_{ij}|$ divides $|D|$ for all $\{i, j\}$ with $0 \leq i < j \leq n-1$; thus $|C|$ divides $|D|^{\binom{n}{2}}$, since
$\displaystyle C = \prod_{0 \le i < j \le n-1} D_{ij}$.  Moreover, $|\chi(n,H)| = |C| |H|^n$ thanks to Proposition~\ref{prop:epichi}. Now, since $|\chi(H)|=|D||H|^2$, we conclude that 
$|\chi(n,H)|$ divides $|D|^{\binom{n}{2}} \cdot |H|^n = |\chi(H)|^{\binom{n}{2}} \cdot |H|^{n-2\binom{n}{2}}$. 
\end{proof}

When $H$ is a finite abelian group of odd order, the subgroup $R(H) = [H, L, H^\psi]$ of $\chi(H)$ is trivial; this follows from the fact that in this case $R(H)^2 = 1$ (see \cite[Corollaries~4.1.15 and~4.2.3]{sidki-wp}).  
The following proposition shows that, in this case, the upper bound established in the preceding theorem is attained.  

\begin{proposition}
\label{prop:prodD}  If $H$ is a finite abelian group such that $R(H)=1$,
then $$\displaystyle \chi(n,H) \cong ( \dots ( ({\ \oplus_{0 \leq i <j \leq n-1}}
D_{ij}) \cdot H) \cdot H^{\psi} \cdots ) \cdot H^{\psi_{n-1}}.$$
\end{proposition}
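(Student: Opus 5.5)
Let $C=\prod_{0\le i<j\le n-1}D_{ij}$ as in Proposition~\ref{prop:epichi}. The plan has three steps: show that $C$ is abelian and that the product of the $D_{ij}$ is direct; show that $\chi(n,H)$ splits over $C$ with complement built from the copies of $H$; and read off the iterated semidirect decomposition.

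\emph{Step 1: $C$ is abelian.} Since $H$ is abelian with $R(H)=1$, the proof of Proposition~\ref{prop:embed}(i) gives $[D,H]=[D,H^\psi]=1$ in $\chi(H)$. So $D$ is central in $\chi(H)$, and $D$ is abelian. Transporting this through the epimorphism $\lambda_{ij}$ of Corollary~\ref{cor:embed}, each $D_{ij}$ is abelian and centralizes $H^{\psi_i}$ and $H^{\psi_j}$. For $k\notin\{i,j\}$ we use the defining relations $[D_{ij},L_{ik}]=1$. These say that conjugation by $h^{\psi_k}$ agrees on $D_{ij}$ with conjugation by $h^{\psi_i}$, which is trivial. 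Hence each $D_{ij}$ is central in $\chi(n,H)$, and $C$ is central and abelian.

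\emph{Step 2: the product is direct.} By Proposition~\ref{prop:embed}(i), $\chi(H)$ embeds, so $|D_{ij}|=|D|$. By Theorem~\ref{th:upperbound} it then suffices to show
\[|\chi(n,H)|=|D|^{\binom n2}|H|^n,\]
since this forces $|C|=\prod|D_{ij}|$ and hence $C=\bigoplus D_{ij}$. To get the lower bound I will construct a model. Let $M=\bigoplus_{i<j}D^{(ij)}$ be a direct sum of copies of $D$, and put a group structure on $M\times H^n$ by a central extension. The cocycle is chosen pairwise: the commutator of the $i$-th and $j$-th factors of $H^n$ is recorded in $D^{(ij)}$ exactly as in $\chi(H)$. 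Concretely, $\chi(H)$ is a central extension of $H\times H$ by $D$ with some cocycle $f$. I take the cocycle on $H^n$ to be
\[F(x,y)=\sum_{i<j} f_{ij}\bigl((x_i,x_j),(y_i,y_j)\bigr),\]
after normalizing $f$ so that it vanishes on each single coordinate factor. Each summand pulls back a 2-cocycle along a projection, so $F$ is a cocycle. I then check that the obvious generators satisfy weak commutativity and $[D_{ij},L_{kl}]=1$. The latter holds automatically because the $D$'s are central in the model. Thus the model is a quotient of $\chi(n,H)$ of the required order, and the equality of orders follows.

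\emph{Step 3: the semidirect decomposition.} By Proposition~\ref{prop:epichi}, $\chi(n,H)/C\cong H^n$. The iterated semidirect form will be read off inductively using the subgroups $C\langle H,\dots,H^{\psi_k}\rangle$: each $H^{\psi_k}$ meets the previous stage trivially, as seen via $\delta$, and normalizes it.

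The main obstacle is Step 2, the construction of the model, and in particular the normalization of $f$ so that it vanishes on the two coordinate factors. I would try to obtain this by choosing the section $(a,b)\mapsto ab^{\psi}$, which makes $f$ trivial on $H\times1$ and on $1\times H$. A second subtlety is that when $C\ne1$ the extension is central and need not split, so ``$\cdot$'' has to be interpreted as in the paper's Theorem~B notation. I would match that convention: each $H^{\psi_k}$ intersects the preceding product trivially, and the whole group is the product of the successive pieces.
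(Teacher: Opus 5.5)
Your proof is correct, but the key step, directness of $C=\prod_{i<j}D_{ij}$, takes a different route from the paper's.

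\textbf{The paper's route.} After an order count for $n=3$ via $\phi_3$ and $\ker\phi_3=D_{02}D_{12}H^{\psi_2}$, the paper handles general $n$ with the projections $\pi_{ij}:\chi(n,H)\to\chi(H)$ that kill every copy except the $i$-th and $j$-th. These are well defined because $[D,H]=[D,H^{\psi}]=1$. They are left inverses of $\lambda_{ij}$, hence injective on $D_{ij}$, and they kill every $D_{kl}$ with $\{k,l\}\neq\{i,j\}$. This gives $D_{ij}\cap\prod_{\{k,l\}\neq\{i,j\}}D_{kl}=1$ directly, with no order counting. Note that Proposition~\ref{prop:embed}(i), which you already cite, is exactly the existence of such a retraction; applying it to every pair would have finished your Step~2 without the model.

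\textbf{Your route.} You build one quotient $E$ that realizes all pairs at once. Your central extension $E$ of $H^n$ by $\bigoplus_{i<j}D$ is, up to isomorphism, the image of $\chi(n,H)$ under $(\pi_{ij})_{i<j}$ in $\prod_{i<j}\chi(H)$. You then convert $|\chi(n,H)|\geq|E|$ into directness using Theorem~\ref{th:upperbound} and the abelianness of $C$ from your Step~1.

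Your normalization works: with the section $(a,b)\mapsto ab^{\psi}$ and $D$ central, $f((a,b),(a',b'))=[b^{\psi},a']$. Hence each copy embeds homomorphically, and weak commutativity in $E$ reduces to $[h^{\psi},h]=1$.

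You should state surjectivity of $\chi(n,H)\to E$ explicitly. The images of the copies map onto $H^n$, and $[a^{\psi_i},b^{\psi_j}]$ goes to $[a,b^{\psi}]$ in the $(i,j)$-summand; these elements generate that summand.

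\textbf{Comparison.} Your route costs cocycle bookkeeping but yields more. It gives an explicit multiplication rule for $\chi(n,H)$ and the exact order $|D|^{\binom n2}|H|^n$. It also derives the iterated semidirect structure explicitly, which the paper leaves implicit in Proposition~\ref{prop:epichi}.

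\textbf{On splitting.} Your worry here is unfounded. Put $G_k:=C\langle H,\dots,H^{\psi_k}\rangle$. Each stage $G_k=G_{k-1}H^{\psi_k}$ is an honest internal semidirect product: $G_{k-1}\trianglelefteq G_k$ because $[H^{\psi_m},H^{\psi_k}]=D_{mk}\subseteq C$, and $H^{\psi_k}\cap G_{k-1}=1$ via $\delta$. The non-split extension $C\to\chi(n,H)\to H^n$ is not what the iterated decomposition asserts.
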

\begin{proof}
We first consider the case $n=3$. By Proposition~\ref{prop:embed}, (i)--(ii),  we have an embedding $\phi: \chi(H) \hookrightarrow \chi(3,H)$, and
$D_{ij} \cong D$ for all $0 \leq i <j \leq 2$ (Corollary~\ref{cor:embed}). Now, consider the map $\phi_3$ defined by
\begin{equation}
     \label{hom:phi3}
\phi_3: \chi(3,H) \rightarrow \chi(H), \;  
 h \mapsto h, \;  
 h^{\psi}\mapsto h^{\psi} \; \text{and} \;  
h^{\psi_2} \mapsto 1, \; \text{for all} \; h \in H. 
\end{equation}
Note first that $\phi_3$ is well defined: since $R(H)=1$ we have $[D,H]=[D,H^{\psi}]=1$ in $\chi(H)$ (as in the proof of Proposition~\ref{prop:embed}(i)), so the images of the defining relations of $\chi(3,H)$ hold in $\chi(H)$. It is readily seen that $\phi_3$ is an epimorphism whose kernel is $\ker \phi_3 = D_{02}D_{12}H^{\psi_2}$. We will show that $D_{02}D_{12}H^{\psi_2}$ is in fact normal in $\chi(3,H)$ --- a property specific to this quotient, where the relations $[D_{ij}, L_{kl}] = 1$ hold for $\{i,j\}\neq\{k,l\}$ (these fail in the mother group $\chi_3(H)$).

\textit{Step 1: $[D_{02}, H] = [D_{02}, H^{\psi_2}] = 1$.}
Since $R(H)=1$, we have $[D,H]=[D,H^{\psi}]=1$ in $\chi(H)$, as in the proof of Proposition~\ref{prop:embed}(i); hence $D$ is central in $\chi(H)$, and applying the epimorphism $\lambda_{02}$ of Corollary~\ref{cor:embed} we get that $D_{02}$ is central in $\langle H, H^{\psi_2}\rangle$; in particular it commutes with $H$ and $H^{\psi_2}$.

\textit{Step 2: $[D_{02}, H^{\psi}] = 1$.}
This requires combining two facts. The relation $[D_{02}, L_{01}] = 1$ asserts that elements of $D_{02}$ commute with elements of $L_{01}$; since $a^{-1}\ap \in L_{01}$, for $d \in D_{02}$ and $a \in H$ we have $d \cdot a^{-1}\ap = a^{-1}\ap \cdot d$. Rearranging using $[d,a]=1$ from Step~1 yields $d^{\ap} = d$. Analogously $d^{\bp} = d$, so $[D_{02}, H^{\psi}] = 1$. (Note: the relation $[D_{02}, L_{01}] = 1$ alone gives only $d^{\ap} = d^{a}$, i.e., agreement of two conjugation actions; it is the centrality of $D_{02}$ in $\langle H, H^{\psi_2}\rangle$ that makes the right-hand side trivial.)

By Steps~1--2, $D_{02} \subseteq Z(\chi(3,H))$. The same reasoning, exchanging the roles of $H$ and $H^{\psi}$, gives $D_{12} \subseteq Z(\chi(3,H))$.

\textit{Step 3: $H^{\psi_2}$ is normalized modulo $D_{02}D_{12}$.}
For $h \in H$ and $g^{\psi_2} \in H^{\psi_2}$,
\[
h\, g^{\psi_2}\, h^{-1} = g^{\psi_2}\,[g^{\psi_2}, h^{-1}] \in H^{\psi_2} D_{02},
\]
and analogously $h^{\psi}\, H^{\psi_2}\, h^{-\psi} \subseteq H^{\psi_2}\, D_{12}$.
The three  steps above ensure the equality $\ker \phi_3 = D_{02}D_{12}H^{\psi_2}$ and, consequently, 
\begin{equation*}
|\chi(3,H)|=|\chi(H)||D_{02}D_{12}H^{\psi_2}|.
\end{equation*}
On the other hand, by Proposition~\ref{prop:epichi} there exists an
epimorphism $\delta: \chi(3,H) \rightarrow H^3$ whose kernel is $\ker \delta = D_{01} D_{02} D_{12}$, and so,   
\begin{equation*}
|\chi(3,H )|=|D_{01} D_{02} D_{12}||H|^3.
\end{equation*}
By comparing the orders we get   
\begin{eqnarray*}
|D_{01}D_{02}D_{12}||H|^3&=& |\chi(H)||D_{02}D_{12}H^{\psi_2}| \\
&=& |D||H|^2 |D_{02}D_{12}H^{\psi_2}|.
\end{eqnarray*}
But $H^{\psi_2} \cap D_{02}D_{12}=1$, because $H^{\psi_2} \cap
\ker \delta =1$.  That is,  
\begin{eqnarray*}
|D_{01}D_{02}D_{12}|&=&|D||D_{02}D_{12}| \\
&=& |D_{01}||D_{02}D_{12}|
\end{eqnarray*}
Finally we conclude that $D_{01} \cap D_{02}D_{12}=1$. By permuting the
indices in the last equality we get that $D_{01}D_{02}D_{12}$ is a direct
product.

For the general case, $n \geq 3$, we observe that the following applications: 
\[
\begin{array}{rcl}
\phi_4: \chi(3, H) & \longrightarrow & \chi(n, H) \\
h & \longmapsto & h \\
h^{\psi} & \longmapsto & h^{\psi} \\
h^{\psi_2} & \longmapsto & h^{\psi_2}
\end{array}
\qquad \text{and} \qquad 
\begin{array}{rcl}
\phi_5: \chi(n, H) & \longrightarrow & \chi(3, H) \\
h & \longmapsto & h \\
h^{\psi} & \longmapsto & h^{\psi} \\
h^{\psi_2} & \longmapsto & h^{\psi_2} \\
h^{\psi_j} & \longmapsto & 1 \quad   (j > 2)
\end{array},
\]
can be extended to homomorphisms ($\phi_5$ is well defined because $R(H)=1$, exactly as for $\phi_3$ above), where the composite $\phi_5\phi_4$ is the identity of $\chi(3,H)$; so we have
the inclusions
\begin{equation*}
\chi(H) \hookrightarrow \chi(3,H) \hookrightarrow \chi(n,H).
\end{equation*}
To upgrade this to a direct product decomposition of $C$, we show that
\begin{equation}\label{eq:direct-sum-cond}
D_{ij} \cap \prod_{\{k,l\} \neq \{i,j\}} D_{kl} = 1 \qquad \text{for each pair } \{i,j\} \subseteq \{0,1,\ldots,n-1\}.
\end{equation}
For each such pair, consider the projection
\begin{equation*}
\pi_{ij}: \chi(n,H) \rightarrow \chi(H), \qquad h^{\psi_i} \mapsto h, \quad h^{\psi_j} \mapsto h^{\psi}, \quad h^{\psi_t} \mapsto 1 \;\;\text{for}\;\; t \notin \{i,j\}.
\end{equation*}
The map $\pi_{ij}$ is well defined because $R(H)=1$ (as for $\phi_3$ above), and the epimorphism $\lambda_{ij}$ of Corollary~\ref{cor:embed} --- an isomorphism onto its image under the present hypothesis --- identifies $\chi(H)$ with $\langle H^{\psi_i}, H^{\psi_j}\rangle$ and $D$ with $D_{ij}$; since $\pi_{ij}$ is a left inverse of it, $\pi_{ij}$ restricts to an isomorphism $D_{ij} \xrightarrow{\sim} D$. Moreover, for any pair $\{k,l\} \neq \{i,j\}$ at least one of the indices $k, l$ lies outside $\{i,j\}$, so $\pi_{ij}(H^{\psi_k}) = 1$ or $\pi_{ij}(H^{\psi_l}) = 1$, giving $\pi_{ij}(D_{kl}) = [\pi_{ij}(H^{\psi_k}), \pi_{ij}(H^{\psi_l})] = 1$. Hence
\begin{equation*}
\pi_{ij} \;(\!\prod_{\{k,l\}\neq\{i,j\}} \!\! D_{kl}\,) = 1,
\end{equation*}
and any $x \in D_{ij} \cap \prod_{\{k,l\}\neq\{i,j\}} D_{kl}$ satisfies $\pi_{ij}(x) = 1$, forcing $x = 1$ by injectivity of $\pi_{ij}|_{D_{ij}}$. This establishes \eqref{eq:direct-sum-cond}, and therefore $C = \prod_{0 \leq i < j \leq n-1} D_{ij} = \bigoplus_{0 \leq i < j \leq n-1} D_{ij}$.
\end{proof}
As observed before, when $H$ is an abelian group it follows that $R(H)^2 = 1$.  
In particular, if $H$ is an abelian $p$-group with $p$ odd, then the group $R(H)$ is trivial.  
We thus obtain the following corollary.  

\begin{corollary} \label{cor:abelian_odd}
Suppose that $H$ is a finite abelian $p$-group, $p$ an odd prime. Then 
\[
\chi(n,H) \;\cong\;
( \dots ( ( \bigoplus_{0 \leq i < j \leq n-1} D_{ij} ) 
  \cdot H ) \cdot H^{\psi} \cdots ) \cdot H^{\psi_{\,n-1}} .
\]
\end{corollary}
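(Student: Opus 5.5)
The plan is to reduce Corollary~\ref{cor:abelian_odd} to Proposition~\ref{prop:prodD}. That proposition gives exactly the asserted decomposition for any finite abelian group $H$ with $R(H)=1$. So the whole task is to check that a finite abelian $p$-group $H$ with $p$ odd satisfies $R(H)=1$, where $R(H)=[H,L,H^{\psi}]\leqslant\chi(H)$.

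First, I use the fact recorded just before Proposition~\ref{prop:prodD}: for $H$ abelian one has $R(H)^2=1$, by \cite[Corollaries~4.1.15 and~4.2.3]{sidki-wp}. More precisely, $R(H)$ is an abelian group of exponent dividing $2$, since it lies in $D$, which is abelian for $H$ abelian. Second, I show that $R(H)$ is a finite $p$-group. Since $H$ is a finite $p$-group, $\chi(H)$ is a finite $p$-group by \cite[Corollary~4.3.8]{sidki-wp} (item (ii) of the list of preserved properties in Subsection~\ref{ssec:embed}). Hence its subgroup $R(H)$ has $p$-power order.

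Combining the two facts, every element of $R(H)$ has order dividing both $2$ and a power of the odd prime $p$. So $R(H)=1$, and Proposition~\ref{prop:prodD} applies verbatim to give
\[
\chi(n,H)\cong(\dots((\textstyle\bigoplus_{0\le i<j\le n-1}D_{ij})\cdot H)\cdot H^{\psi}\cdots)\cdot H^{\psi_{n-1}}.
\]

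The only step needing care is the precise form of the cited statement $R(H)^2=1$: it must say that $R(H)$ has exponent dividing $2$, not merely that $R(H)^2$ is trivial in some other sense. If the citation only gave that $R(H)$ is generated by involutions, the conclusion would still follow, because in a $p$-group with $p$ odd there are no involutions at all. So either reading suffices, and no computation within $\chi(H)$ is needed beyond the cited results.
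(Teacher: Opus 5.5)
Your proposal is correct and follows the paper's route. The paper likewise notes that $R(H)^2=1$ for abelian $H$, concludes $R(H)=1$ when $p$ is odd, and applies Proposition~\ref{prop:prodD}; your justification that $R(H)$ is a $p$-group, because $\chi(H)$ is a finite $p$-group, spells out a step the paper leaves implicit.
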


The upper bound established in Theorem~\ref{th:upperbound} is not attained when $H$ is an elementary abelian $2$-group, as shown in the following proposition.  

\begin{proposition}
Let $H$ be an elementary abelian $2$-group of rank $k$. Then the order
of $\chi(n,H)$ divides $2^{\binom{k}{2}\binom{n}{2}+ \sum_{p=3}^{k}{\binom{k}{p}(n-1)}+nk}$.
\end{proposition}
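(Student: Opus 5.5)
The plan is to refine the proof of Theorem~\ref{th:upperbound}. By Proposition~\ref{prop:epichi} we have $|\chi(n,H)| = |C|\,|H|^n = |C|\,2^{nk}$, so it suffices to show that $|C|$ divides $2^{\binom{k}{2}\binom{n}{2}+(n-1)\sum_{p=3}^{k}\binom{k}{p}}$. Fix a basis $x_1,\dots,x_k$ of $H$. I would recall from \cite{sidki-wp} the structure of $\chi(H)$ for $H$ elementary abelian of rank $k$:
\begin{itemize}
\item $D/R(H)$ has order dividing $2^{\binom{k}{2}}$, generated by the images of $[x_a,x_b^{\psi}]$ with $a<b$;
\item $R(H)=[D,H]$ is central and elementary abelian of order dividing $2^{\sum_{p\geq 3}\binom{k}{p}}$;
\item $R(H)$ is generated by one element $e(S)$ for each subset $S\subseteq\{1,\dots,k\}$ with $|S|\geq 3$, namely the iterated commutator of $x_{a_1},x_{a_2}^{\psi},x_{a_3},\dots$ with $S=\{a_1<\dots<a_p\}$.
\end{itemize}
Via the epimorphisms $\lambda_{ij}$ of Corollary~\ref{cor:embed}, each $D_{ij}$ then has a normal subgroup $R_{ij}=\lambda_{ij}(R(H))$. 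The quotient $D_{ij}/R_{ij}$ has order dividing $2^{\binom{k}{2}}$, and $R_{ij}$ is generated by elements $e_{ij}(S)$.

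Put $R:=\prod_{i<j}R_{ij}$. First I would show that $R$ is normal, indeed central, in $\chi(n,H)$. Then $|C| = |C/R|\,|R|$, and $C/R$ is a product of the $\binom{n}{2}$ images of $D_{ij}/R_{ij}$. This gives the factor $2^{\binom{k}{2}\binom{n}{2}}$. For centrality I would use the relation $[D_{ij},L_{lm}]=1$ exactly as in the proof of Proposition~\ref{prop:epichi}. From it one gets $[d,x^{\psi_i}]=[d,x^{\psi_l}]$ for all $d\in D_{ij}$ and all $l$, so conjugation by any copy $H^{\psi_l}$ on $D_{ij}$ agrees with conjugation by $H^{\psi_i}$ (and likewise by $H^{\psi_j}$). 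Hence $R_{ij}=[D_{ij},H^{\psi_l}]$ for every $l$. Centrality of $R_{ij}$ then follows from its centrality in $\langle H^{\psi_i},H^{\psi_j}\rangle$.

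The remaining and main step is to show that $R$ is generated by the $(n-1)\sum_{p\geq3}\binom{k}{p}$ elements $e_{0j}(S)$ with $1\leq j\leq n-1$. The target identity is the cocycle relation
\[
e_{ij}(S)=e_{0i}(S)\,e_{0j}(S)\qquad (0<i<j),
\]
valid because everything involved is central of exponent $2$. To get it, I would use the freedom just established: inside $e_{ij}(S)$, any entry after the first commutator may be taken from any copy. This makes $e_{ij}(S)$ a function only of the pair of copies used in the innermost commutator $[x_{a_1}^{\psi_i},x_{a_2}^{\psi_j}]$, modulo $R$-level corrections. I would then expand weak commutativity applied to products, for instance
\[
[(x_{a_1}x_{a_2})^{\psi_i},(x_{a_1}x_{a_2})^{\psi_j}]=1,
\]
together with the analogous relations with the copies $0,i,j$ mixed. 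Commuting the result with further basis elements and using the Hall--Witt identity should yield that $e_{ij}(S)$ is symmetric in $i,j$ and additive in the sense above. This computation is the obstacle. I expect to check the case $|S|=3$ by hand first, verifying that the sign and ordering issues disappear since $R$ has exponent $2$, and then to induct on $|S|$ using centrality. With the cocycle relation, $|R|$ divides $2^{(n-1)\sum_{p=3}^{k}\binom{k}{p}}$, and the stated bound follows.
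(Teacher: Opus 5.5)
Your overall architecture is the paper's. You write $|\chi(n,H)|=|C|\,2^{nk}$, take one degree-$2$ generator $[x_a^{\psi_i},x_b^{\psi_j}]$ per pair of copies and pair $a<b$, and use Hall--Witt to reduce the higher commutators to those whose inner commutator involves copy $0$. Your cocycle relation is exactly the paper's identity $[a_r^{\psi_i},a_s^{\psi_j},a_{i_1},\ldots]=[a_s^{\psi_j},a_{i_1},a_r,\ldots][a_r^{\psi_i},a_{i_1},a_s,\ldots]$.

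\textbf{The gap.} Your proof of that relation rests on the claim that $R(H)=[D,H]$ is central in $\chi(H)$, and this is false once $k\geq 4$. By your own third bullet, for $|S|\geq 4$ we have $e(S)=[e(S\setminus\{a_p\}),x_{a_p}]\in[R(H),H]$. So centrality would kill all of these and give $|R(H)|\leq 2^{\binom k3}$. Together with $|D/R(H)|\leq 2^{\binom k2}$, this yields $|D|\leq 2^{\binom k2+\binom k3}$. That contradicts $|D|=2^{\binom k2+\binom k3+\cdots+\binom kk}$ from \cite[Theorem 4.2.4]{sidki-wp}, which the paper records right after this proposition.

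Hence $R_{ij}$ need not be central in $\chi(n,H)$. Your justification of the cocycle relation (``valid because everything involved is central of exponent $2$'') fails, and so does the planned induction on $|S|$ ``using centrality''. This is precisely the step that produces the factor $n-1$ instead of $\binom n2$, and you had already flagged it as the obstacle. So the heart of the statement is left unproved as written.

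\textbf{The repair.} Replace centrality by abelianness of $C$, which is what the paper actually uses.
\begin{itemize}
\item By $[D_{ij},L_{kl}]=1$ and Lemma~\ref{lem:relacoes_chi}(ii)--(iii), every copy acts on $D_{ij}$ as $H^{\psi_i}$ does. So the conjugation action of $\chi(n,H)$ on $C$ factors through the abelian group $H$.
\item This gives $[D_{ij},D_{kl}]=1$. Hence $C$ is an elementary abelian $2$-group and an $\mathbb{F}_2H$-module, and each map $c\mapsto[c,x]$ is an endomorphism of $C$. It also gives normality of $R=\prod R_{ij}$, which is all you need for $|C|=|C/R|\,|R|$.
\item By Lemma~\ref{lem:relacoes_chi}(i),(iv) and $H$ abelian, $[x_a^{\psi_i},x_b^{\psi_j}]$ is fixed by both $x_a$ and $x_b$. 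Therefore, in the Hall--Witt identity for the involutions $x_a^{\psi_i},x_b^{\psi_j},x_c$, all conjugating factors disappear.
\item One gets exactly $e_{ij}(S)\,e_{0j}(S)\,e_{0i}(S)=1$ for $S=\{a,b,c\}$. This uses that the three orderings of $S$ give the same element inside each $R_{0l}$, which is inherited from $\chi(H)$.
\item Applying the endomorphisms $[-,x_{a_4}],\ldots,[-,x_{a_p}]$ extends the relation to all $|S|\geq 3$.
\end{itemize}
With this in place, your count of generators, and hence the stated bound, goes through.
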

\begin{proof}
From the epimorphism $\delta: \chi(n,H) \rightarrow H^n$ defined in
Proposition~\ref{prop:epichi}, we obtain
\begin{equation*}
|\chi(n,H)|=|C||H|^n, \; \text{where} \; C = \prod D_{ij}, \; \text{as above}.
\end{equation*}
Thus, for all $h_1, \; h_2, \; h_3, \; h_4 \;\in H, \;0 \leq i <j \leq n-1, \;
0\leq k <l \leq n-1$, we get
\begin{eqnarray*}
[h_1^{\psi_i},h_2^{\psi_j}]^{[h_3^{\psi_k},h_4^{\psi_l}]}&=&[h_1^{%
\psi_i},h_2^{\psi_j}]^{[h_3,h_4]} \qquad ( \text{since} \; [D_{ij},L_{kl}]=1) \\
&=& [h_1^{\psi_i},h_2^{\psi_j}] \qquad\qquad\quad (\text{once} \;H \;\text{is abelian}),
\end{eqnarray*}
and 
\begin{eqnarray*}
1&=&[(h_1^{\psi_i})^2, h_2^{\psi_j}] \\
&=&[h_1^{\psi_i}, h_2^{\psi_j}]^{h_1^{\psi_i}}[h_1^{\psi_i}, h_2^{\psi_j}] \\
&=& [h_1^{\psi_i}, h_2^{\psi_j}]^{h_1^{\psi_j}}[h_1^{\psi_i}, h_2^{\psi_j}]
\\
&=& [h_1^{\psi_i}, h_2^{\psi_j}][h_1^{\psi_i}, h_2^{\psi_j}] \\
&=& [h_1^{\psi_i}, h_2^{\psi_j}]^2.
\end{eqnarray*}
Then $C$ is an elementary abelian $2$-group. Suppose that $H= \left< a_1,a_2, \,
\ldots \, , a_k \right>$; by commutator relations and by the relations $%
[D_{ij}, L_{ks}]=1$ for all $0\leq i< j \leq n-1,\; \; 0\leq k< s \leq n-1$, we conclude that the commutator $[h_1^{\psi_i},h_2^{\psi_j}] \in C$ is a product of
commutators of type $[a_r^{\psi_i},a_s^{\psi_j}]^{a_1^{\epsilon_1}a_2^{%
\epsilon_2}\cdots a_k^{\epsilon_k}}$, where $\epsilon_i \in \{0,1\}$. But $%
[a_r^{\psi_i}, a_s^{\psi_j}]^{a_1^{\epsilon_1}}=[a_r^{\psi_i},
a_s^{\psi_j}][a_r^{\psi_i}, a_s^{\psi_j},a_1^{\epsilon_1}]$. So, 
\begin{multline*}
[a_r^{\psi_i},a_s^{\psi_j}]^{a_1^{\epsilon_1}a_2^{\epsilon_2}\cdots a_k^{\epsilon_k}}
= [a_r^{\psi_i},a_s^{\psi_j}] \prod_{1\leq l \leq k}[a_r^{\psi_i},a_s^{\psi_j},a_l^{\epsilon_l}] \\
\cdot \prod_{1\leq l<t \leq k}[a_r^{\psi_i},a_s^{\psi_j},a_l^{\epsilon_l},a_t^{\epsilon_t}] \cdots
[a_r^{\psi_i},a_s^{\psi_j},a_1^{\epsilon_1}, \ldots, a_k^{\epsilon_k}].
\end{multline*}
Using Witt's identity on commutators of degree higher than 2 we get
\begin{eqnarray*}
[a_r^{\psi_i},a_s^{\psi_j},a_{i_1}^{\epsilon^{i_1}}, \ldots ,
a_{i_t}^{\epsilon^{i_t}}]
&=&[[a_s^{\psi_j},a_{i_1}^{\epsilon^{i_1}},a_r^{\psi_i}][a_{i_1}^{%
\epsilon^{i_1}},a_r^{\psi_i},a_s^{\psi_j}], \ldots ,
a_{i_t}^{\epsilon^{i_t}}] \\
&=&[a_s^{\psi_j},a_{i_1}^{\epsilon^{i_1}},a_r^{\psi_i}, \ldots ,
a_{i_t}^{\epsilon^{i_t}}][a_{i_1}^{\epsilon^{i_1}},a_r^{\psi_i},a_s^{%
\psi_j}, \ldots , a_{i_t}^{\epsilon^{i_t}}] \\
&=&[a_s^{\psi_j},a_{i_1}^{\epsilon^{i_1}},a_r, \ldots ,
a_{i_t}^{\epsilon^{i_t}}][a_r^{\psi_i},a_{i_1}^{\epsilon^{i_1}},a_s, \ldots
, a_{i_t}^{\epsilon^{i_t}}]
\end{eqnarray*}
and, using Witt's identity again, we obtain  
\begin{eqnarray*}
[a_r^{\psi_i}, \ldots,
a_{i_o}^{\epsilon^{i_o}},a_{i_{p}}^{\epsilon^{i_{p}}},a_{i_q}^{%
\epsilon^{i_q}}\ldots] &=&\biggl[[a_{i_p}^{\epsilon^{i_p}},a_{i_{q}}^{%
\epsilon^{i_{q}}},[a_r^{\psi_i}, \ldots,a_{i_o}^{\epsilon^{i_o}}]] \times \\
& \times &[a_{i_{q}}^{\epsilon^{i_{q}}},[a_r^{\psi_i},
\ldots,a_{i_o}^{\epsilon^{i_o}} ],a_{i_p}^{\epsilon^{i_p}}], \ldots,%
\biggr] \\
&=&\biggl[a_{i_{q}}^{\epsilon^{i_{q}}},[a_r^{\psi_i},
\ldots,a_{i_o}^{\epsilon^{i_o}} ],a_{i_p}^{\epsilon^{i_p}}, \ldots,\biggr] \\
&=&\biggl[a_r^{\psi_i}, \ldots,a_{i_o}^{\epsilon^{i_o}}
,a_{i_{q}}^{\epsilon^{i_{q}}},a_{i_p}^{\epsilon^{i_p}}, \ldots,\biggr]. \\
\end{eqnarray*}
That is, $C$ is generated by commutators of degree 2 of type $%
[a_r^{\psi_i},a_s^{\psi_j}]$, with $0\leq i< j\leq n-1$ and $1\leq r< s \leq k$%
, together with other commutators of higher degree, of type $[a_r^{\psi_i},
a_{i_1}, a_{i_2}, \ldots , a_{i_t}]$, where $1\leq r<i_1<i_2< \ldots i_t
\leq k$. Thus $C$ has at most $\binom{k}{2}\binom{n}{2}$ generators of degree
2, and at most $\binom{k}{p}(n-1)$ generators of degree higher than 2.
Therefore, the order of $C$ divides $2^{\binom{k}{2}\binom{n}{2}+
\sum_{p=3}^{k}{\binom{k}{p}(n-1)}}$ and thus, the order of $\chi(n,H)$
divides $2^{\binom{k}{2}\binom{n}{2}+ \sum_{p=3}^{k}{\binom{k}{p}(n-1)}+nk}$, proving our assertion.
\end{proof}
Notice that by \cite[Theorem 4.2.4]{sidki-wp}, if $H$ is an  elementary abelian $2$-group, then   
\begin{equation*}
|\chi(H)|=2^{2^k-1}2^k \qquad \text{and} \qquad |D|=\frac{2^{2^k-1}2^k}{2^{2k}}.
\end{equation*}
From the equality $2^k=(1+1)^k=\binom{k}{0}+\binom{k}{1}+ \ldots +
\binom{k}{k}$, we get that  
\begin{equation*}
|D|=2^{\binom{k}{2}+\binom{k}{3}+ \ldots + \binom{k}{k}}
\end{equation*}
This implies that $2^{\binom{k}{2}\binom{n}{2}+ \sum_{p=3}^{k}{\binom{k}{p}(n-1)}}$ is
less than $|D|^{\binom{n}{2}}=2^{\binom{k}{2}\binom{n}{2}+\binom{k}{3}\binom{n}{2}+
\ldots + \binom{k}{k}\binom{n}{2}}$.

\subsection{$\protect\chi(n,H): H$ a Perfect Group}

In \cite[Theorem 4.4.1]{sidki-wp}, it was proven that if $H$ is a perfect group, then $\chi(H)$ is isomorphic to the group $T(\tilde{H})$ (cf. \cite[Subsection 4.4]{sidki-wp}) for a suitable stem extension $\tilde{H}$ of $H$; in this situation we also have $D \cong \tilde{H}$. We now relate $\chi(n,H)$ to the group $T_n(\tilde{H})$ constructed in Subsection~\ref{ssec:tn}, thereby extending the corresponding result of \cite{sidki-wp} to $n$ copies of $H$.

Recall that by Proposition~\ref{prop:epichi} there exists an epimorphism $\delta: \chi(n,H)
 \rightarrow H^n$ such that $\displaystyle \ker(\delta)=\prod_{0\leq i<j\leq n-1}{D_{ij}} \quad (= C)$.
\begin{proposition}
\label{prop:epichintn} There exists an epimorphism $\lambda_{\tilde{H}}:
\chi(n,H) \rightarrow T_n(\tilde{H})$, such that $\ker(\lambda_{\tilde{H}})
\subseteq C$.
\end{proposition}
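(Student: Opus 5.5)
We define $\lambda_{\tilde{H}}$ on generators and check that it respects the defining relations of $\chi(n,H)$. Since $H_i \cong H$ via $h \mapsto Z_0\cdots Z_{n-1}(a)_{\dots}$, we set
\[
h^{\psi_i} \longmapsto \bar{x}_i(h) := \bigl((a,a,1,\ldots,1)^{\tilde{\varphi}^{i}}\bigr)Z_0Z_1\cdots Z_{n-1},
\]
where $a \in \tilde{H}$ is any preimage of $h$ under $\tilde{H}\to \tilde{H}/Z\cong H$. This is well defined because changing $a$ by $z\in Z$ changes the element by an element of $Z_i$. Concretely, $\tilde{H}_i$ consists of the elements $(a)_1(a)_{i+2}$, i.e. $a$ placed in coordinate $1$ and in coordinate $i+2$. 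Restricted to each copy, the map is the isomorphism $H \cong H_i$ recorded in Subsection~\ref{ssec:tn}, so it is a homomorphism on each $H^{\psi_i}$. It is surjective since $T_n(\tilde{H})=\langle H_0,\ldots,H_{n-1}\rangle$.

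Next we verify the relations, working in $\tilde{T}_n=\tilde{H}^{n+1}$ and then projecting.

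\emph{Weak commutativity.} For $i\neq j$ we have $[(a)_1(a)_{i+2},(a)_1(a)_{j+2}]=([a,a])_1=1$, since coordinates $i+2\neq j+2$ commute with each other and with coordinate $1$.

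\emph{Mixed relations.} By Remark~\ref{rem:epialfa}(i), $[\tilde{H}_i,\tilde{H}_j]=([a,b])_1$, so $\tilde{D}_{ij}\subseteq \tilde{H}\times 1\times\cdots\times 1$. The elements $(a^{-1})_1(a)_{1}\,(a^{-1})_{k+2}(a)_{l+2}$ represent $h^{-\psi_k}h^{\psi_l}$. These have trivial first coordinate, so they generate a subgroup supported on coordinates $2,\ldots,n+1$. Hence every element of $\tilde{D}_{ij}$ commutes with every element of $\tilde{L}_{kl}$, for all pairs $(i,j)$ and $(k,l)$, and $[D_{ij},L_{kl}]=1$ maps to $1$. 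This gives the epimorphism $\lambda_{\tilde{H}}$.

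For the kernel, we compose with $\alpha_n$ of Remark~\ref{rem:epialfa}(iii). The map $\alpha_n$ sends $\bar{x}_i(h)$ to the element with $h$ in position $i$, up to the natural identification of coordinates, so $\alpha_n\lambda_{\tilde{H}}$ coincides with $\delta$ of Proposition~\ref{prop:epichi}. Therefore $\ker\lambda_{\tilde{H}}\subseteq\ker\delta=C$.

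The main obstacle is the coordinate bookkeeping in $\alpha_n$. As written, $\alpha_n$ uses $n$ coordinates $x_0,\ldots,x_{n-1}$, while $\tilde{T}_n$ has $n+1$, so we must check that it is indeed the projection onto coordinates $2,\ldots,n+1$ (reduced mod $Z$). We must also check that it is well defined modulo $Z_0\cdots Z_{n-1}$, which holds because each $Z_i$ projects trivially into $H^n$ after reducing mod $Z$. Once $\alpha_n\circ\lambda_{\tilde{H}}=\delta$ is checked on generators, the kernel inclusion follows at once.
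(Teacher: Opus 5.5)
Your proposal is correct and follows essentially the same route as the paper. You define $\lambda_{\tilde H}$ by $h^{\psi_i}\mapsto Z_0\cdots Z_{n-1}(a)_1(a)_{i+2}$, check weak commutativity coordinatewise, use that the images of $D_{ij}$ sit in the first coordinate while those of $L_{kl}$ avoid it, and get $\ker\lambda_{\tilde H}\subseteq C$ via $\alpha_n$; your explicit identification $\alpha_n\circ\lambda_{\tilde H}=\delta$ (reading $\alpha_n$ as projection onto coordinates $2,\ldots,n+1$ modulo $Z$) and your treatment of lifts $a\in\tilde H$ of $h\in H$ are more careful than the paper's brief appeal to Remark~\ref{rem:epialfa}.
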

\begin{proof}
Define the application $\lambda : \chi(n,H)\rightarrow T_n(\tilde{H})$ on the generators of $\chi(n, H)$ putting 
\begin{equation*}
\begin{split}
\lambda(h^{\psi_i}) & = Z_0Z_1\cdots Z_{n-1}(h,h,1,\ldots,1)^{\varphi^i} \\
\ & = Z_0 Z_1\cdots Z_{n-1}{(h)_1(h)_{i+2}}, \quad i = 0, \ldots, n-1.
\end{split}
\end{equation*} 
So we have,
\begin{equation}  \label{eq:comdiah}
[Z_0Z_1\cdots Z_{n-1} (a)_1(a)_i, \, Z_0Z_1 \cdots Z_{n-1} (a)_1(a)_j]=Z_0Z_1\cdots Z_{n-1}
\end{equation}
for all $a \in \tilde{H}$. On the other hand,   
\begin{eqnarray*}
& [Z_0Z_1\cdots Z_{n-1}(a)_1(a)_i,Z_0Z_1\cdots
Z_{n-1}(b)_1(b)_j]=Z_0Z_1\cdots Z_{n-1}([a,b])_1,& \\
&\left(Z_0Z_1\cdots Z_{n-1}(a)_1(a)_k\right)^{-1}Z_0Z_1\cdots
Z_{n-1}(a)_1(a)_s=Z_0Z_1\cdots Z_{n-1}(a^{-1})_k(a)_s.&
\end{eqnarray*}
Consequently,  
\begin{equation}  \label{eq:comdelh}
[Z_0Z_1\cdots Z_{n-1}([a_1,a_2])_1, \; Z_0Z_1\cdots Z_{n-1}(a^{-1})_k(a)_s]=Z_0Z_1\cdots Z_{n-1}.
\end{equation}
Equations \eqref{eq:comdiah} and \eqref{eq:comdelh} show that the images by $\lambda$ of the generators $H^{\psi_0} = H, H^{\psi_1} = H^{\psi}, \ldots, H^{\psi_{n-1}}$ of the group $\chi(n, H)$ also generate $T_n(\tilde{H})$ and satisfy the defining relations of $\chi(n, H)$. Therefore, $\lambda$ extends to an epimorphism   
\begin{equation*}
\lambda_{\tilde{H}} : \chi(n,H) \rightarrow T_n(\tilde{H}),
\end{equation*}
such that  
\begin{equation*}
\lambda_{\tilde{H}}(H^{\psi_i})=H_0^{\varphi^i} \; \; \text{for all} \; \; 1\leq i \leq n-1,
\quad \text{and} \quad 
\lambda_{\tilde{H}}(D_{ij})=\overline{D}_{ij}. 
\end{equation*}
By Remark \ref{rem:epialfa} we have that $C \; \; (=
\langle D_{ij} \; ; \; 0\leq i < j \leq n-1 \rangle) \; = Ker(\lambda_{\tilde{H}%
}\alpha_n)$, that is, $\ker(\lambda_{\tilde{H}}) \subseteq C$. This proves our assertion.
\end{proof}
Recall that we are writing $D$ for the subgroup $D(H) = [H, H^\psi] \leq \chi(H).$ We are ready to establish Theorem~\hyperref[th:thmB]{B}, restated as follows.
\begin{theorem} \label{thm:perfect}
Let $H$ be a perfect group. Then,
\begin{itemize}
\item[($i$)] $D_{ij}=D_{ks}\cong D, \ \forall\, 0 \leq i < j \leq n-1,
\ 0 \leq k < s \leq n-1$; 
\item[($ii$)] $\chi(n,H) \cong \left( \ldots \left( \left( D \cdot H \right) \cdot H^{\psi}\right) \cdots \right) \cdot H^{\psi_{n-1}} \cong T_n(%
\tilde{H})$. 
\end{itemize}
\end{theorem}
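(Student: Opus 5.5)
The plan is to prove (i) directly inside $\chi(n,H)$, and then to compare orders (or kernels) through the epimorphism $\lambda_{\tilde H}$ of Proposition~\ref{prop:epichintn}, with $\tilde H$ the stem extension of \cite[Theorem 4.4.1]{sidki-wp}, for which $D\cong\tilde H$.

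For (i), I first show that $D_{01}=D_{02}$; the general case then follows by the symmetry of the presentation under permutations of the copies. Since $H$ is perfect, $D_{01}$ is generated by the elements $[[a,b]^{\psi_0}, h^{\psi_1}]$. The key identity is that, modulo the relations $[D_{ij},L_{kl}]=1$, conjugation by $h^{\psi_k}$ on $D_{ij}$ agrees with conjugation by $h^{\psi_i}$, and also with conjugation by $h^{\psi_j}$. This is the computation already used in Proposition~\ref{prop:epichi}. Using it, I will express $[x^{\psi_0},[a,b]^{\psi_1}]$, via the Hall--Witt identity on $x^{\psi_0},a^{\psi_1},b^{\psi_1}$, in terms of the actions of $H^{\psi_1}$ on $D_{01}$. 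Then I replace $\psi_1$-conjugation by $\psi_2$-conjugation to obtain an element of $[H^{\psi_0},[H,H]^{\psi_2}]\subseteq D_{02}$. Concretely, I would aim for the equality $[x,[a,b]^{\psi_1}]=[x,[a,b]^{\psi_2}]$. To get it, I would use $[a,b]^{\psi_1}=[a,b]^{\psi_2}\cdot\ell$ with $\ell\in L_{21}$, and show that $[x,\ell]$ is trivial modulo the relevant identities. This gives $D_{01}\subseteq D_{02}$, and by symmetry the two are equal; hence all the $D_{ij}$ coincide. Once equality holds, $C=D_{01}$.

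For the isomorphism $D_{01}\cong D$, Corollary~\ref{cor:embed} gives that $D_{01}$ is an epimorphic image of $D$. Under $\lambda_{\tilde H}$, the subgroup $D_{01}$ maps onto $\overline{D}_{01}$. By Remark~\ref{rem:epialfa}(i)--(ii), $\overline{D}_{01}\cong\tilde{D}_{01}=\tilde H\times1\times\cdots\times1\cong\tilde H\cong D$. Thus the composite $D\to D_{01}\to\overline{D}_{01}$ is a surjection $D\to D$. It is in fact the natural identification: it is induced by $\chi(H)\to T(\tilde H)$, which is an isomorphism by \cite[Theorem 4.4.1]{sidki-wp}. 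So both maps are injective and $D_{01}\cong D$.

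For (ii), Proposition~\ref{prop:epichintn} gives $\ker\lambda_{\tilde H}\subseteq C=D_{01}$, and $\lambda_{\tilde H}$ is injective on $D_{01}$ by the above. Hence $\lambda_{\tilde H}$ is an isomorphism onto $T_n(\tilde H)$. The semidirect decomposition comes from Proposition~\ref{prop:epichi}: $C=D$ is normal, and $\chi(n,H)/C\cong H^n$. Moreover, $H^{\psi_k}$ meets $C\,H\cdots H^{\psi_{k-1}}$ trivially, as one sees by applying $\delta$. Finally, each successive subgroup is normalized by the next copy: $(h^{\psi_j})^{g^{\psi_k}}\in h^{\psi_j}D_{jk}=h^{\psi_j}C$. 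This yields the iterated product $(\ldots((D\cdot H)\cdot H^{\psi})\cdots)\cdot H^{\psi_{n-1}}$.

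The main obstacle is the commutator calculation in (i) showing $D_{01}\subseteq D_{02}$. I would first try to mimic the proof in \cite[Section 4.4]{sidki-wp} that $D=[H,H^\psi]$ is generated by $[H',H^\psi]$ and is centralized appropriately. If that fails, a fallback is to avoid (i) initially: compare $|C|\le|\prod D_{ij}|$ with $|T_n(\tilde H)|=|\tilde H||H|^n$ in the finite case, and then deduce $D_{ij}=C$ from the fact that each $D_{ij}$ surjects onto $\overline{D}_{ij}=\overline{D}_{01}$. This fallback still requires showing that $\lambda_{\tilde H}$ is injective on $C$, which is essentially the same difficulty.
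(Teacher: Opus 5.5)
Your argument for (i) has a genuine gap exactly where you flag it: the inclusion $D_{01}\subseteq D_{02}$ is never proved, and the mechanism you propose cannot supply it from the tools you list.

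The Hall--Witt expansion of $[x^{\psi_0},[a,b]^{\psi_1}]$ produces terms such as $[[x^{\psi_0},a^{-\psi_1}],b^{\psi_1}]$. Relabelling the outer conjugation to $b^{\psi_2}$ via Lemma~\ref{lem:relacoes_chi}(ii) leaves the inner commutator in $D_{01}$, so you stay inside $D_{01}$ and never reach $[H^{\psi_0},(H')^{\psi_2}]$.

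Your concrete plan is to write $[a,b]^{\psi_1}=[a,b]^{\psi_2}\ell$ with $\ell\in L_{12}$ and kill $[x^{\psi_0},\ell]$. If all you use about $\ell$ is $\ell\in L_{12}$, this amounts to $[H^{\psi_0},L_{12}]=1$. That is not a consequence of weak commutativity plus $[D_{ij},L_{kl}]=1$. For $H=C_p\times C_p$ with $p$ odd, Proposition~\ref{prop:prodD} makes $D_{01},D_{02},D_{12}$ central direct factors, each isomorphic to $D\neq1$. Hence $[a^{\psi_0},b^{-\psi_2}b^{\psi_1}]=d_{01}d_{02}^{-1}\neq1$ in $\chi(3,H)$, where $d_{ij}=[a^{\psi_i},b^{\psi_j}]$.

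So perfectness would have to enter essentially, and the proposal does not say how. The element-wise identity you aim for does hold a posteriori, since both sides map to $([\tilde x,[\tilde a,\tilde b]])_1$ in $T_n(\tilde H)$, but that uses the theorem itself. Your fallback, as you note, reduces to injectivity of $\lambda_{\tilde H}$ on $C$. Part (ii) uses $C=D_{01}$, so it inherits the gap.

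The missing idea is to compare derived subgroups rather than elements. By Lemma~\ref{lem:relacoes_chi}(ii)--(iii), any word in the copies acts on the normal subgroup $D_{ij}$ as the same word with every copy replaced by $\psi_i$. So a generator $[x^{\psi_k},y^{\psi_s}]$ of $D_{ks}$ induces on $D_{ij}$ the same automorphism as $[x^{\psi_i},y^{\psi_j}]\in D_{ij}$, and vice versa. This gives $[D_{ij},D_{ks}]=[D_{ij},D_{ij}]$ and, symmetrically, $[D_{ks},D_{ij}]=[D_{ks},D_{ks}]$.

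Each $D_{ij}$ is perfect, being an epimorphic image of $D\cong\tilde H$ (Corollary~\ref{cor:embed}). Here $\tilde H$ is perfect because $\tilde H=\tilde H'Z$ with $Z\leq\tilde H'$. Hence $D_{ij}=D_{ij}'=[D_{ij},D_{ks}]=D_{ks}'=D_{ks}$.

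With this in hand, the rest of your plan is sound and essentially the paper's. Identifying the composite $\chi(H)\to\langle H,H^{\psi}\rangle\to\langle H_0,H_1\rangle\leq T_n(\tilde H)$ with the isomorphism $\chi(H)\cong T(\tilde H)$ of \cite[Theorem 4.4.1]{sidki-wp} shows $\lambda_{\tilde H}$ is injective on $D_{01}$. This route needs no finiteness, unlike an order count. Then $\ker\lambda_{\tilde H}\subseteq C=D_{01}$ gives (ii).
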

\begin{proof} 
(i). By \cite[Theorem 4.4.1, Corollary 4.4.2 and Remark 2.3.1-(vi)]{sidki-wp}, if $H$ is a perfect group then there exists a maximal stem extension $\tilde{H}$ of $H$ such that $\chi(H) \cong T(\tilde{H})$; in addition, $D \cong  \tilde{H}$. Thus, by Remark~\ref{rem:epialfa},  
\begin{equation*}
\bar{D}_{ij} \cong \tilde{H} \cong D \; \; \text{for all} \; \; 0 \leq i<j \leq n-1.
\end{equation*}
Now, Proposition~\ref{prop:epichintn} and its proof show that $D \cong \bar{D}_{ij}$ is the image of $D_{ij}$ by the epimorphism 
$\lambda_{\tilde{H}}:
\chi(n,H) \rightarrow T_n(\tilde{H})$, for all  $ 0 \leq i<j \leq n-1$. On the other hand, $%
D_{ij}$ is the image of $D$ under the epimorphism $\lambda_{ij}$ of Corollary~\ref{cor:embed}, so that $|D_{ij}|$ divides $|D|$. Therefore, $D \cong D_{ij} \; \; \text{for all} \; \;0\leq i<j \leq n-1$.
By the relations $[D_{ij}, L_{ks}]=1$ holding in  $\chi(n,H)$, it follows that  
$[D_{ij}, D_{ij}]=[D_{ij}, D_{ks}]=[D_{ks}, D_{ks}], \; \; \text{for all} \; \;  \;0\leq i<j \leq n-1, 0\leq k<s \leq n-1.$
But $D \cong D_{ij}$ is also a perfect group, and therefore $D_{ij} = D_{ks}$ for all 
$0 \leq i < j \leq n-1, \quad 0\leq k<s \leq n-1$. This proves part (i). 

(ii). Note that the isomorphism $\chi(n,H) \cong \left( \ldots \left( \left( D \cdot H \right) \cdot H^{\psi}\right) \cdots \right) \cdot H^{\psi_{n-1}}$
is a consequence of the previous item and of   
Proposition~\ref{prop:epichi}. By Proposition~\ref{prop:epichintn} and Remark~\ref{rem:epialfa}
 we have that $\ker(\lambda_{\tilde{H}}) \subseteq D_{ij}$. Now, each of the subgroups $%
\langle H^{\psi_i}, H^{\psi_j} \rangle$ of $\chi(n,H)$ is isomorphic with $%
\chi(H)$: indeed, since $D \cong D_{ij}$ (finite), the restriction $\lambda_{ij}|_D$ is injective, and Lemma~\ref{lem:embred} then shows that $\lambda_{ij}$ is an isomorphism. From \cite[Theorem 4.4.1]{sidki-wp}  and by the 
isomorphism $\lambda_{\tilde{H}}|_ {\left<H^{\psi_i},H^{\psi_j} \right>}$,
we conclude that $D_{ij} \cap Ker(\lambda_{\tilde{H}})=1$, for all $0\leq i<j \leq n-1$, which finally
implies that $\ker(\lambda_{\tilde{H}})=1$. This concludes the proof.
\end{proof}

\subsection{$\protect\chi(n,H): H$ Nilpotent} \label{subsec:nil_chi}

We close this section by establishing the nilpotency results for $\chi(n,H)$, originally established in \cite{oliveira-thesis} and extending earlier results of \cite{grs} for $\chi(H)$. The arguments rely solely on weak commutativity together with the relation
\begin{equation}\label{eq:DLcomut}
[D_{ij}, L_{ks}]=1, \qquad \text{for all } \{i,j\}\neq\{k,s\}, \; 0\leq i<j,\, k<s \leq n-1;
\end{equation}
which yields, in particular, that the conjugation action of any copy $H^{\psi_k}$ on $D_{ij}$ coincides with that of $H^{\psi_i}$ (equivalently $H^{\psi_j}$): for $d\in D_{ij}$ and $z\in H$ one has $d^{z^{\psi_k}}=d^{z^{\psi_i}}=d^{z^{\psi_j}}$, for all $0\leq k\leq n-1$ (see Lemma~\ref{lem:relacoes_chi}(ii)). These two facts --- weak commutativity and this relabelling of the conjugating copy --- are the only ingredients of the proofs given below. Throughout this subsection we write $\chi_n := \chi(n,H)$.

A further structural feature, used tacitly throughout, is the symmetry among the copies.

\begin{lemma}\label{lem:sym_aut}
The symmetric group $S_n$ acts on $\chi(n,H)$ by automorphisms permuting the copies: for each $\sigma \in S_n$ the assignment $h^{\psi_i} \mapsto h^{\psi_{\sigma(i)}}$ $(h\in H, \; 0\leq i\leq n-1)$ extends to an automorphism $\alpha_\sigma$ of $\chi(n,H)$, and $\sigma \mapsto \alpha_\sigma$ is a homomorphism $S_n \to \operatorname{Aut}(\chi(n,H))$.
\end{lemma}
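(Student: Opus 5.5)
The plan is to use von Dyck's theorem on the defining presentation of $\chi(n,H)$: a map on generators that sends the set of defining relators into the normal closure of the relators extends to a homomorphism. Fix $\sigma\in S_n$. Since $\chi(n,H)$ is generated by $H^{\psi_0},\dots,H^{\psi_{n-1}}$, we can regard it as the free product of these $n$ copies modulo the extra relators. The assignment $h^{\psi_i}\mapsto h^{\psi_{\sigma(i)}}$ is an isomorphism of each copy onto another. Hence it defines an automorphism $\hat\alpha_\sigma$ of the free product $H^{\psi_0}*\cdots*H^{\psi_{n-1}}$ that permutes the free factors. This takes care of the relations internal to each copy of $H$.

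Next we check the two families of extra relators. A weak commutativity relator $[h^{\psi_i},h^{\psi_j}]$ with $i\neq j$ is sent to $[h^{\psi_{\sigma(i)}},h^{\psi_{\sigma(j)}}]$. Since $\sigma(i)\neq\sigma(j)$, this is again a defining relator. For the mixed relators, note that $\hat\alpha_\sigma$ carries the subgroup $[H^{\psi_i},H^{\psi_j}]$ onto $[H^{\psi_{\sigma(i)}},H^{\psi_{\sigma(j)}}]$. It also sends $h^{-\psi_k}h^{\psi_l}$ to $h^{-\psi_{\sigma(k)}}h^{\psi_{\sigma(l)}}$, so $L_{kl}$ is carried onto $L_{\sigma(k)\sigma(l)}$. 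Here $L_{kl}$ is generated by such elements, as its definition and Remark~\ref{rem:epialfa} indicate.

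One point of bookkeeping is that the definition of $\chi(n,H)$ ranges over ordered pairs with $i\neq j$, $k\neq l$. We have $D_{ji}=D_{ij}$ and $L_{lk}=L_{kl}$, the latter because $(h^{-\psi_k}h^{\psi_l})^{-1}=h^{-\psi_l}h^{\psi_k}$. So it does not matter whether $\sigma$ reverses the order of indices. Each relator $[d,x]$ with $d\in D_{ij}$, $x\in L_{kl}$ is therefore mapped to a relator $[d',x']$ with $d'\in D_{\sigma(i)\sigma(j)}$, $x'\in L_{\sigma(k)\sigma(l)}$. Consequently $\hat\alpha_\sigma$ maps the normal closure $N$ of all relators into itself, and it induces an endomorphism $\alpha_\sigma$ of $\chi(n,H)$.

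Finally, on generators $\hat\alpha_\tau\hat\alpha_\sigma$ agrees with $\hat\alpha_{\sigma\tau}$ or $\hat\alpha_{\tau\sigma}$, depending on whether maps are composed on the left or on the right. With the paper's right-action convention $x^{\alpha}$, this gives a homomorphism $S_n\to\operatorname{Aut}$, up to the standard choice of convention. In particular $\alpha_{\sigma^{-1}}$ is a two-sided inverse of $\alpha_\sigma$, since both composites fix every generator $h^{\psi_i}$. So each $\alpha_\sigma$ is an automorphism, and $\sigma\mapsto\alpha_\sigma$ is a homomorphism. The only step requiring any care is the verification that the generating sets of $D_{ij}$ and $L_{kl}$ are permuted exactly, including the order-reversal symmetry. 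Everything else is formal.
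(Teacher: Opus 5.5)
Your proposal is correct and follows the same route as the paper. Both arguments check that permuting the copies carries each family of defining relators (weak commutativity and $[D_{ij},L_{kl}]$) to relators of the same type, and both use $\alpha_{\sigma^{-1}}$ as a two-sided inverse. You additionally spell out the ordered/unordered pair bookkeeping and the composition-convention caveat, which the paper leaves implicit when it calls $\sigma\mapsto\alpha_\sigma$ ``plainly multiplicative''.
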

\begin{proof}
The defining relations in Definition~\ref{def:chin_1} are invariant under permuting the copies: weak commutativity $[h^{\psi_i},h^{\psi_j}]=1$ holds for every pair, and each relation $[D_{ij},L_{kl}]=1$ (with $\{i,j\}\neq\{k,l\}$) is carried by $\sigma$ to $[D_{\sigma(i)\sigma(j)},L_{\sigma(k)\sigma(l)}]=1$, a relation of the same type. Hence $\alpha_\sigma$ is a well-defined endomorphism; since $\alpha_{\sigma^{-1}}$ is a two-sided inverse, it is an automorphism, and $\sigma\mapsto\alpha_\sigma$ is plainly multiplicative.
\end{proof}

\begin{remark}\label{rem:n3_reduction}
By this symmetry the whole nilpotency analysis of this subsection is governed by the case $n=3$, with the three copies $H=H^{\psi_0}$, $H^{\psi}=H^{\psi_1}$ and $H^{\psi_2}$. Indeed:
\begin{itemize}
\item[(i)] each pair of copies generates a subgroup $\langle H^{\psi_i},H^{\psi_j}\rangle$ which is an epimorphic image of $\chi(H)$ (Corollary~\ref{cor:embed}); since laws pass to quotients, the two-copy commutators $[H^{\psi_i},cH^{\psi_j}]$ are controlled exactly as for $\chi(H)$ in \cite{grs};
\item[(ii)] by Lemma~\ref{lem:relacoes_chi}(ii)--(iii), any further copy $H^{\psi_k}$ acts on $D_{ij}$ by conjugation as $H^{\psi_i}$ does, so that every commutator collapses to one within a single pair --- a phenomenon already displayed, in full, by the three copies $H^{\psi_0},H^{\psi_1},H^{\psi_2}$;
\item[(iii)] by Lemma~\ref{lem:sym_aut} all $\binom{n}{2}$ pairs are equivalent under $\operatorname{Aut}(\chi(n,H))$, and the terms of the lower central series of $\chi_n$ are products over these pairs; this is the source of the factor $\binom{n}{2}$ in Theorem~\hyperref[th:thmC]{C}(iii).
\end{itemize}
Accordingly, it suffices to understand the base copy $\chi(H)=\langle H,H^{\psi}\rangle$ together with the effect of a single extra copy; the results below are stated for general $n$ only to fix the notation.
\end{remark}

\begin{lemma}
\label{lem:relacoes_chi}  Let $x, \;y, \; z, \; y_i, \; z_i \; \in H$. Then
\begin{itemize}
\item[(i)] $[x^{\psi_i},y^{\psi_j}]=[x^{\psi_j},y^{\psi_i}], \; i, j \in \{
0, 1, \; \ldots, {n-1} \}$;

\item[(ii)] $[x^{\psi_i},y^{\psi_j}]^{z^{\psi_k}}=[x^{\psi_i},y^{\psi_j}]^{z^{\psi_i}}=[x^{\psi_i},y^{%
\psi_j}]^{z^{\psi_j}}, \; 0 \leq i \neq j \leq n-1, \; 0 \leq k \leq n-1$;

\item[(iii)] $[x^{\psi_i},y^{\psi_j}]^{w(z_1^{\epsilon_1},z_2^{\epsilon_2},%
\cdots,z_\ell^{\epsilon_\ell})}=[x^{\psi_i},y^{\psi_j}]^{w(z_1^{\delta_1},z_2^{%
\delta_2},\cdots,z_\ell^{\delta_\ell})}$ for $\epsilon_k \in \{\psi_t \mid 0 \leq t \leq n-1\}$, where $\delta_k =
\begin{cases}
\epsilon_k, & \text{if } \epsilon_k \in \{\psi_i, \psi_j\} \\
\psi_i, & \text{if } \epsilon_k \notin \{\psi_i, \psi_j\}
\end{cases}.$

\item[(iv)] $[x^{\psi_i},y^{\psi_j},x^{\psi_j}]=[x^{\psi_j},y^{\psi_j},x^{%
\psi_i}], \; 0 \leq i \neq j \leq n-1$;

\item[(v)] $[x^{\psi_i},y_1^{\psi_j},\cdots,y_k^{\psi_j},x^{\psi_j}]=[x^{%
\psi_j},y_1^{\psi_j},\cdots,y_k^{\psi_j},x^{\psi_i}], \; 0 \leq i \neq j \leq n-1$.
\end{itemize}
\end{lemma}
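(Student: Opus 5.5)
We prove the five items in the order (i), (ii), (iii), (iv), (v). Items (i), (iv) and (v) are two-copy statements. Items (ii) and (iii) use the extra relations $[D_{ij},L_{kl}]=1$.

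For (i), (iv) and (v) we work inside $\langle H^{\psi_i},H^{\psi_j}\rangle$. By Corollary~\ref{cor:embed} this subgroup is the image of $\chi(H)$ under $\lambda_{ij}$ (up to swapping which copy plays $H$ and which plays $H^\psi$). In $\chi(H)$ the corresponding identities are the standard consequences of weak commutativity from \cite[Section~4.1]{sidki-wp} and \cite{grs}: $[x,y^\psi]=[x^\psi,y]$, $[x,y^\psi,x^\psi]=[x^\psi,y^\psi,x]$, and the iterated version with $y_1^\psi,\dots,y_k^\psi$ inserted. They transfer to $\chi(n,H)$ because identities are preserved under epimorphisms. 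To be self-contained for (i), we expand $1=[(xy)^{\psi_i},(xy)^{\psi_j}]$ with the commutator identities and use $[x^{\psi_i},x^{\psi_j}]=[y^{\psi_i},y^{\psi_j}]=1$. Items (iv) and (v) then follow from (i) together with $[D,L]=1$ in $\chi(H)$ \cite[Proposition~4.1.7]{sidki-wp}. Concretely, $[D_{ij},L_{ij}]=1$ gives $d^{x^{\psi_i}}=d^{x^{\psi_j}}$ for $d\in D_{ij}$, and iterating (i) lets us move the last entry between copies.

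For (ii), fix $d=[x^{\psi_i},y^{\psi_j}]\in D_{ij}$ and $k\notin\{i,j\}$. The element $z^{-\psi_i}z^{\psi_k}$ lies in $L_{ik}$, and $\{i,k\}\neq\{i,j\}$, so $[D_{ij},L_{ik}]=1$. Hence $d$ commutes with $z^{-\psi_i}z^{\psi_k}$, which is exactly $d^{z^{\psi_k}}=d^{z^{\psi_i}}$. The equality $d^{z^{\psi_i}}=d^{z^{\psi_j}}$ is the two-copy fact $[D_{ij},L_{ij}]=1$ already noted. For $k\in\{i,j\}$ there is nothing new to prove.

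For (iii) we induct on the length of the word $w$. Write $w=w'\cdot z_\ell^{\epsilon_\ell}$, with the case $z_\ell^{-\epsilon_\ell}$ handled the same way. The key point is that $D_{ij}$ is normal in $\chi(n,H)$, as shown in the proof of Proposition~\ref{prop:epichi}. So $d^{w'}$ again lies in $D_{ij}$, and we may apply (ii) to it to replace $z_\ell^{\epsilon_\ell}$ by $z_\ell^{\delta_\ell}$. Since (ii) applies to arbitrary elements of $D_{ij}$, the replacements can be made one letter at a time, say from the right; the induction hypothesis then handles $w'$. The one point to check is that (ii) holds for every element of $D_{ij}$ and not only for basic commutators. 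This is immediate: the relation $[D_{ij},L_{ik}]=1$ is imposed on the whole subgroup $D_{ij}$, and $L_{ik}$ contains all elements $z^{-\psi_i}z^{\psi_k}$.

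The main obstacle is item (v). It is the only step needing a genuine computation, an induction on $k$ in the two-copy group. We will attempt it by writing $c=[x^{\psi_i},y_1^{\psi_j},\dots,y_k^{\psi_j}]$. By repeated use of (i) and $[D,L]=1$, we show $c$ equals the same expression with $x^{\psi_j}$ in front and some entry moved to copy $\psi_i$. Then $[c,x^{\psi_j}]$ and $[\,\cdot\,,x^{\psi_i}]$ can be exchanged using $c^{x^{\psi_i}}=c^{x^{\psi_j}}$, which holds because $c\in D_{ij}$. If this route fails, we will simply cite the corresponding lemma of \cite{grs} for $\chi(H)$ and push it forward through $\lambda_{ij}$.
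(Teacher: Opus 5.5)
Your items (i)--(iii) are correct and follow the paper. For $k\notin\{i,j\}$ in (ii) you use $[D_{ij},L_{ik}]=1$ with $z^{-\psi_i}z^{\psi_k}$ as the paper does, and (iii) is the same letter-by-letter induction using normality of $D_{ij}$. (Your expansion for (i) still needs a conjugation by $y^{-\psi_i}y^{-\psi_j}$ and the substitution $y\mapsto y^{-1}$; the paper expands $[(xy^{-1})^{\psi_i},(xy^{-1})^{\psi_j}]$ directly.) You are also right that (iv) and (v) are purely two-copy identities.

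The gap is in your own route to (iv) and (v). Your moves are (i) applied to the innermost commutator, and trading $x^{\psi_j}$ for $x^{\psi_i}$ when it acts on an element of $D_{ij}$. These only ever produce triple commutators whose inner part is a mixed commutator in $D_{ij}$. For $k=1$ they give $[x^{\psi_i},y^{\psi_j},x^{\psi_j}]=[x^{\psi_j},y^{\psi_i},x^{\psi_j}]=[x^{\psi_j},y^{\psi_i},x^{\psi_i}]$. That identity is true, but it is not (iv). The right-hand side of (iv) is $[[x,y]^{\psi_j},x^{\psi_i}]$, whose inner entry $[x,y]^{\psi_j}$ lies in $H^{\psi_j}$, not in $D_{ij}$. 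None of your moves connects $[d,x^{\psi_i}]$ with $d\in D_{ij}$ to an element $[g^{\psi_j},x^{\psi_i}]$ with $g\in H$.

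The missing idea is to apply weak commutativity to the repeated letter $x$ in the outer position. The paper expands $[y^{\psi_j},x^{\psi_i}x^{\psi_j}]=[y^{\psi_j},x^{\psi_j}x^{\psi_i}]$ to obtain
\[
\bigl[[y^{\psi_j},x^{\psi_j}],[y^{\psi_j},x^{\psi_i}]\bigr]\,[y^{\psi_j},x^{\psi_i},x^{\psi_j}]=[y^{\psi_j},x^{\psi_j},x^{\psi_i}].
\]
The first factor is trivial: by (ii)--(iii), $[y^{\psi_j},x^{\psi_j}]$ acts on $[y^{\psi_j},x^{\psi_i}]\in D_{ij}$ exactly as $[y^{\psi_j},x^{\psi_i}]$ itself does. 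Then (iv) follows by rewriting both sides with $[c,g]=([c^{-1},g]^{-1})^{c}$. One more use of (ii)--(iii) matches the conjugators $[x^{\psi_i},y^{\psi_j}]$ and $[x^{\psi_j},y^{\psi_j}]$ acting on an element of $D_{ij}$.

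Item (v) is an induction on $k$. Substitute $y_{k-1}y_k$ for $y_{k-1}$ in the case $k-1$, expand both sides, and cancel using the inductive hypothesis and (ii)--(iii). Your sketch for (v) has the same defect as for (iv).

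Your fallback is to take these identities in $\chi(H)$ from \cite{grs} and push them forward along $\lambda_{ij}$. This is logically valid, since the derivation above uses only relations between copies $i$ and $j$. But it means the substantive part of the lemma is cited rather than proved.
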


\begin{proof}
(i). By Definition~\ref{def:chin_1} we have that $[u^{\psi_i},u^{\psi_j}]=1$  for all  $u \in H, \;  i, j \in
\{0,1, \ldots,n-1\}$ and so,   
\begin{eqnarray*}
1&=& [x^{\psi_i}y^{-\psi_i},x^{\psi_j} y^{-\psi_j}] \\
&=& [x^{\psi_i},x^{\psi_j} y^{-\psi_j}]^{y^{-\psi_i}}[y^{-\psi_i},x^{\psi_j}
y^{-\psi_j}] \\
&=&[x^{\psi_i}, y^{-\psi_j}]^{y^{-\psi_i}}[y^{-\psi_i},x^{\psi_j}]^{
y^{-\psi_j}}.
\end{eqnarray*}
Conjugating the above equality by $y^{\psi_i}y^{\psi_j}$ we find that   
$$1=[x^{\psi_i}, y^{-\psi_j}]^{y^{\psi_j}}[y^{-\psi_i},x^{\psi_j}]^{
y^{\psi_i}},$$ 
from which we obtain $[x^{\psi_j},y^{\psi_i}]=[x^{\psi_i},y^{\psi_j}]$. \\
(ii). By the previous item we have  
\begin{eqnarray*}
[x^{\psi_i}z^{\psi_i},y^{\psi_j}]&=&[x^{\psi_j} z^{\psi_j},y^{\psi_i}] \\
{[x^{\psi_i},y^{\psi_j}]}^{z^{\psi_i}} [z^{\psi_i},y^{\psi_j}]&=&[x^{\psi_j}
,y^{\psi_i}]^{z^{\psi_j}}[ z^{\psi_j},y^{\psi_i}] \\
{[x^{\psi_i},y^{\psi_j}]}^{z^{\psi_i}} &=&[x^{\psi_j} ,y^{\psi_i}]^{z^{\psi_j}%
} \\
{[x^{\psi_i},y^{\psi_j}]}^{z^{\psi_i}} &=&{[x^{\psi_i},y^{\psi_j}]}%
^{z^{\psi_j}}.
\end{eqnarray*}
This yields $[x^{\psi_i},y^{\psi_j}]^{z^{\psi_i}}=[x^{\psi_i},y^{\psi_j}]^{z^{\psi_j}}$. Finally, for $k\notin\{i,j\}$, put $d=[x^{\psi_i},y^{\psi_j}]\in D_{ij}$ and $\ell=z^{-\psi_i}z^{\psi_k}=[z^{\psi_i},\psi_i^{-1}\psi_k]\in L_{ik}$. Since $\{i,k\}\neq\{i,j\}$, the relation $[D_{ij},L_{ik}]=1$ shows that $\ell$ commutes with both $d$ and $d^{z^{\psi_i}}\in D_{ij}$ (normality of $D_{ij}$), whence
$$[x^{\psi_i},y^{\psi_j}]^{z^{\psi_k}}=d^{z^{\psi_i}\ell}=\big(d^{z^{\psi_i}}\big)^{\ell}=d^{z^{\psi_i}}=[x^{\psi_i},y^{\psi_j}]^{z^{\psi_i}}.$$
(iii). Let $w(z_1^{\epsilon_1},z_2^{\epsilon_2},\cdots,z_\ell^{%
\epsilon_\ell})=h_1^{\psi_{i_1}}h_2^{\psi_{i_2}}\ldots h_m^{\psi_{i_m}}$ so
that 
\begin{equation*}
w(z_1^{\delta_1},z_2^{\delta_2},\cdots,z_\ell^{\delta_\ell})=h_1^{%
\delta_{i_1}}h_2^{\delta_{i_2}}\ldots h_m^{\delta_{i_m}},
\end{equation*}
where $\delta_k = \epsilon_k$ if  $\epsilon_k \in \{\psi_i, \psi_j\}$ and
$\delta_k = \psi_i$ otherwise. We proceed by induction on m. For $m=1$, item (ii) gives directly
\[[x^{\psi_i}, y^{\psi_j}]^{h_1^{\psi_{i_1}}} =
\begin{cases}
[x^{\psi_i}, y^{\psi_j}]^{h_1^{\psi_{i_1}}} \bigl(= [x^{\psi_i}, y^{\psi_j}]^{h_1^{\psi_i}}=[x^{\psi_i}, y^{\psi_j}]^{h_1^{\psi_j}}\bigr) & \text{if } \psi_{i_1} \in \{\psi_i, \psi_j\}, \\
[x^{\psi_i}, y^{\psi_j}]^{h_1^{\psi_i}} & \text{otherwise}.
\end{cases}
\]
Thus, $[x^{\psi_i}, y^{\psi_j}]^{h_1^{\psi_{i_1}}} = [x^{\psi_i}, y^{\psi_j}]^{h_1^{\delta_{i_1}}}.$
Suppose by induction that 
$$[x^{\psi_i},y^{\psi_j}]^{h_1^{\psi_{i_1}}h_2^{\psi_{i_2}}\ldots
h_m^{\psi_{i_m}}}=[x^{\psi_i},y^{\psi_j}]^{h_1^{\delta_{i_1}}h_2^{%
\delta_{i_2}}\ldots h_m^{\delta_{i_m}}}.$$ Using the same argument as above and the normality of $D_{ij}$, we finally obtain 
\begin{eqnarray*}
[x^{\psi_i},y^{\psi_j}]^{h_1^{\psi_{i_1}} h_2^{\psi_{i_2}} \ldots
h_m^{\psi_{i_m}} h_{m+1}^{\psi_{i_{m+1}}}}
&=&
\big(
[x^{\psi_i},y^{\psi_j}]^{h_1^{\delta_{i_1}} h_2^{\delta_{i_2}} \ldots
h_m^{\delta_{i_m}}}
\big)^{h_{m+1}^{\psi_{i_{m+1}}}} \\[6pt]
&=&
[x^{\psi_i},y^{\psi_j}]^{h_1^{\delta_{i_1}} h_2^{\delta_{i_2}} \ldots
h_m^{\delta_{i_m}} h_{m+1}^{\delta_{i_{m+1}}}}.
\end{eqnarray*}

(iv). Expanding the equality $[y^{\psi_j},x^{\psi_i}
x^{\psi_j}]=[y^{\psi_j},x^{\psi_j}x^{\psi_i}]$ we obtain  
\begin{equation*}
[y^{\psi_j},x^{\psi_j}][y^{\psi_j},x^{\psi_i}][y^{\psi_j},x^{\psi_i},x^{%
\psi_j}]=[y^{\psi_j},x^{\psi_i}][y^{\psi_j},x^{\psi_j}][y^{\psi_j},x^{%
\psi_j},x^{\psi_i}],
\end{equation*}
and thus,  
\begin{equation*}
[y^{\psi_j},x^{\psi_j}]^{-1}[y^{\psi_j},x^{\psi_i}]^{-1}[y^{\psi_j},x^{%
\psi_j}][y^{\psi_j},x^{\psi_i}][y^{\psi_j},x^{\psi_i},x^{\psi_j}]=[y^{%
\psi_j},x^{\psi_j},x^{\psi_i}],
\end{equation*}
while by the previous item we have  
\begin{equation*}
1=[y^{\psi_j},x^{\psi_i};y^{\psi_j},x^{\psi_i}]=
[y^{\psi_j},x^{\psi_i};y^{\psi_j},x^{\psi_j}], 
\end{equation*}
where $[y^{\psi_j},x^{\psi_i};y^{\psi_j},x^{\psi_i}]
:=[[y^{\psi_j},x^{\psi_i}],[y^{\psi_j},x^{\psi_i}]]$. Use of this relation gives   
\begin{equation*}
[y^{\psi_j},x^{\psi_i},x^{\psi_j}]=[y^{\psi_j},x^{\psi_j},x^{\psi_i}].
\end{equation*}
Therefore,   
\begin{eqnarray*}
{[y^{\psi_j},x^{\psi_i},x^{\psi_j}]}^{-[x^{\psi_i},y^{\psi_j}]} &=& {%
[y^{\psi_j},x^{\psi_j},x^{\psi_i}]}^{-[x^{\psi_i},y^{\psi_j}]} \\
{[x^{\psi_i},y^{\psi_j},x^{\psi_j}]} &\overset{\text{(iii)}}{=}& {%
[y^{\psi_j},x^{\psi_j},x^{\psi_i}]}^{-[x^{\psi_j},y^{\psi_j}]} \\
{[x^{\psi_i},y^{\psi_j},x^{\psi_j}]} &=& [x^{\psi_j},y^{\psi_j},x^{\psi_i}].
\end{eqnarray*}
(v). By inductive hypothesis on $k$ we can assume that  
\begin{equation*}
[x^{\psi_i},y_1^{\psi_j},\cdots,y_{k-1}^{\psi_j},x^{\psi_j}]=[x^{%
\psi_j},y_1^{\psi_j},\cdots, y_{k-1}^{\psi_j}, x^{\psi_i}], \; 0 \leq i \neq j \leq n-1.
\end{equation*}
Expanding the equality
\begin{equation*}
[x^{\psi_i},y_1^{\psi_j},\cdots,y_{k-2}^{\psi_j},y_{k-1}^{\psi_j}y_k^{%
\psi_j},x^{\psi_j}]=[x^{\psi_j},y_1^{\psi_j},\cdots,y_{k-2}^{%
\psi_j},y_{k-1}^{\psi_j}y_k^{\psi_j},x^{\psi_i}]
\end{equation*}
for $\; 0 \leq i \neq j \leq n-1$, we get  
\begin{eqnarray*}
&\bigl[\lbrack
x^{\psi_i},y_1^{\psi_j},\cdots,y_{k-2}^{\psi_j},y_k^{\psi_j}][x^{%
\psi_i},y_1^{\psi_j},\cdots,y_{k-2}^{\psi_j},y_{k-1}^{\psi_j}] \times \\ 
& \quad \times [x^{%
\psi_i},y_1^{\psi_j},\cdots,y_{k-2}^{\psi_j},y_{k-1}^{\psi_j},y_k^{%
\psi_j}],x^{\psi_j} \bigr]=& \\
&\bigl[\lbrack x^{\psi_j},y_1^{\psi_j},\cdots,y_{k-2}^{\psi_j},y_k^{\psi_j}][x^{%
\psi_j},y_1^{\psi_j},\cdots,y_{k-2}^{\psi_j}
,y_{k-1}^{\psi_j}] \times & \\ 
& \quad \times[x^{\psi_j},y_1^{\psi_j},\cdots,y_{k-2}^{\psi_j},y_{k-1}^{%
\psi_j},y_k^{\psi_j}],x^{\psi_i} \bigr].&
\end{eqnarray*}
Therefore,  
\begin{eqnarray*}
&[x^{\psi_i},y_1^{\psi_j},\cdots,y_{k-2}^{\psi_j},y_k^{\psi_j},x^{%
\psi_j}]^{[x^{\psi_i}
,y_1^{\psi_j},\cdots,y_{k-2}^{\psi_j},y_{k-1}^{\psi_j}][x^{\psi_i},y_1^{%
\psi_j} ,\cdots,y_{k-2}^{\psi_j},y_{k-1}^{\psi_j},y_k^{\psi_j}]}& \\
& \times
[x^{\psi_i},y_1^{\psi_j},\cdots,y_{k-2}^{\psi_j},y_{k-1}^{\psi_j},x^{%
\psi_j}]^{[x^{\psi_i},y_1^{\psi_j},\cdots,y_{k-2}^{\psi_j},y_{k-1}^{%
\psi_j},y_k^{\psi_j}]} & \\
& \times
[x^{\psi_i},y_1^{\psi_j},\cdots,y_{k-2}^{\psi_j},y_{k-1}^{\psi_j},y_k^{%
\psi_j},x^{\psi_j}]& \\
&=[x^{\psi_j},y_1^{\psi_j},\cdots,y_{k-2}^{\psi_j},y_k^{\psi_j},x^{%
\psi_i}]^{[x^{\psi_j},y_1^{\psi_j},\cdots,y_{k-2}^{\psi_j},y_{k-1}^{%
\psi_j}][x^{\psi_j},y_1^{\psi_j},\cdots,y_{k-2}^{\psi_j},y_{k-1}^{%
\psi_j},y_k^{\psi_j}]}& \\
& \times
[x^{\psi_j},y_1^{\psi_j},\cdots,y_{k-2}^{\psi_j},y_{k-1}^{\psi_j},x^{%
\psi_i}]^{[x^{\psi_j},y_1^{\psi_j},\cdots,y_{k-2}^{\psi_j},y_{k-1}^{%
\psi_j},y_k^{\psi_j}]} & \\
& \times
[x^{\psi_j},y_1^{\psi_j},\cdots,y_{k-2}^{\psi_j},y_{k-1}^{\psi_j},y_k^{%
\psi_j},x^{\psi_i}].&
\end{eqnarray*}
By the inductive hypothesis and item (iii) we finally conclude that  
\begin{equation*}
[x^{\psi_i},y_1^{\psi_j},\cdots,y_{k}^{\psi_j},x^{\psi_j}]=[x^{\psi_j},y_1^{%
\psi_j},\cdots,y_{k}^{\psi_j},x^{\psi_i}], \; 0 \leq i \neq j \leq n-1.
\end{equation*}
The proof of Lemma~\ref{lem:relacoes_chi} is complete.
\end{proof}

\begin{lemma}\label{lem:relnil_chi}
\begin{itemize}
\item[(i)] $[H^{\psi_i},H^{\psi_j},H^{\epsilon_1},\ldots,H^{%
\epsilon_{\ell}}]=[H^{\psi_i},(\ell+1)H^{\psi_j}]$  for all $\ell \geq 1, \; 0 \leq i \neq j \leq n-1$, and any $\; \epsilon_k \in \{\psi_t \mid 0\leq t \leq n-1\}$;

\item[(ii)] $[H^{\psi_i},mH^{\psi_j};\gamma_{\ell}(H^{\psi_j})] \leq
[H^{\psi_i},(m+\ell)H^{\psi_j}]$,  for all  $m \geq 0, \; \ell \geq 1, \; 0 \leq i \neq j \leq n-1$;

\item[(iii)] $\displaystyle [H^{\epsilon_1},H^{\epsilon_2},\ldots, H^{\epsilon_{\ell}}]\leq
[H^{\psi_i},(\ell-1)H^{\psi_j}]\,\gamma(H^{\psi_i})^{\chi_n}\gamma(H^{\psi_j})^{\chi_n}$, \\
for $\epsilon_k \in \{\psi_i,\psi_j\}, \; i\neq j$.
\end{itemize}
\end{lemma}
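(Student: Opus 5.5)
We prove the three items in order; each builds on the previous one and on Lemma~\ref{lem:relacoes_chi}.

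For (i), we first establish the case $\ell=1$: $[H^{\psi_i},H^{\psi_j},H^{\epsilon_1}]=[H^{\psi_i},2H^{\psi_j}]$. Since $D_{ij}\trianglelefteq\chi_n$, the subgroup $[D_{ij},H^{\epsilon_1}]$ is generated by elements $d^{-1}d^{z^{\epsilon_1}}$ with $d\in D_{ij}$, $z\in H$. By Lemma~\ref{lem:relacoes_chi}(ii) we have $d^{z^{\epsilon_1}}=d^{z^{\psi_j}}$, so $[d,z^{\epsilon_1}]=[d,z^{\psi_j}]$ elementwise. Hence $[D_{ij},H^{\epsilon_1}]=[D_{ij},H^{\psi_j}]$ for every $\epsilon_1$.

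For general $\ell$ we induct. Write $X_\ell=[H^{\psi_i},\ell H^{\psi_j}]$; this is a normal subgroup contained in $D_{ij}$, because all the groups involved are generated by conjugates of elements of $D_{ij}$. The key point is that Lemma~\ref{lem:relacoes_chi}(iii) applies to every element of $D_{ij}$, not only to the generators $[x^{\psi_i},y^{\psi_j}]$. Indeed, conjugation by $z^{\psi_k}$ is an automorphism, and it agrees with conjugation by $z^{\psi_j}$ on the generators of $D_{ij}$, hence on all of $D_{ij}$. So for $u\in X_\ell$ we get $[u,z^{\epsilon}]=[u,z^{\psi_j}]$, and therefore $[X_\ell,H^{\epsilon}]=[X_\ell,H^{\psi_j}]=X_{\ell+1}$. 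Iterating this step gives (i).

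For (ii), we induct on $\ell$, using the three subgroup lemma. The case $\ell=1$ is the definition. Assume the statement for $\ell$. Since $\gamma_{\ell+1}(H^{\psi_j})=[\gamma_\ell(H^{\psi_j}),H^{\psi_j}]$, we apply the three subgroup lemma, in the normal-closure form, to $A=X_m$, $B=\gamma_\ell(H^{\psi_j})$ and $C=H^{\psi_j}$. This yields
$$[A,[B,C]]\le[B,C,A][C,A,B].$$
The first factor is $[[A,B],C]$, which by induction lies in $[X_{m+\ell},H^{\psi_j}]=X_{m+\ell+1}$. The second factor is $[X_{m+1},\gamma_\ell(H^{\psi_j})]$, which by induction lies in $X_{m+1+\ell}$. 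Normality of all the $X_k$ makes the three subgroup lemma applicable.

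For (iii), we write $K=\gamma(H^{\psi_i})^{\chi_n}\gamma(H^{\psi_j})^{\chi_n}$ and work modulo $K$, inducting on $\ell$. In the quotient both copies are abelian, and the image of $\langle H^{\psi_i},H^{\psi_j}\rangle$ is a quotient of $\chi(A)$ for $A$ abelian. A commutator $[H^{\epsilon_1},\dots,H^{\epsilon_\ell}]$ is trivial modulo $K$ unless some two consecutive entries differ. The first entry pair with $\epsilon_1\neq\epsilon_2$ produces $D_{ij}$, and by (i) every further entry can be relabelled to $\psi_j$, giving $X_{\ell-1}$. If instead $\epsilon_1=\epsilon_2$, the initial segment lies in $\gamma_2(H^{\epsilon_1})\le K$, and $K$ is normal, so the whole commutator lies in $K$. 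We use the fact that $[XY,Z]\le[X,Z][Y,Z]$ for normal subgroups to handle products.

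The main obstacle is making the three subgroup lemma and the subgroup bookkeeping rigorous, since the $H^{\epsilon}$ are not normal subgroups. We handle this by replacing them with their normal closures where needed, or by arguing at the level of generators and using $D_{ij}\trianglelefteq\chi_n$ together with Lemma~\ref{lem:relacoes_chi}(iii).
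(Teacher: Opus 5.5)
Your items (i) and (ii) are correct. In (i) you are more direct than the paper. Lemma~\ref{lem:relacoes_chi}(ii) already says that every copy, $H^{\psi_i}$ included, acts on $D_{ij}$ exactly as $H^{\psi_j}$ does, so the paper's appeal to parts (iv)--(v) is not needed. Your (ii) is the paper's Witt-identity induction written in three-subgroup form. Two small fixes there. First, the display should read $[A,[B,C]]\le[A,B,C]\,[C,A,B]$, as your next sentence intends. Second, normality of $X_k$ for $k\ge1$ needs a real justification: every element of $\chi_n$ acts on $D_{ij}$ as some element of $H^{\psi_j}$ (Lemma~\ref{lem:relacoes_chi}(ii)--(iii)), and $H^{\psi_j}$ normalizes each $X_k$.

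The genuine gap is in (iii). You read $\gamma(H^{\psi_i})$ as the derived subgroup. That makes (iii) nearly tautological, which is why you never use (ii) there. The intended subgroup is $\gamma_\ell(H^{\psi_i})$, for two reasons:
\begin{itemize}
\item the paper's proof treats only the constant sequences $(\psi_i,\ldots,\psi_i)$ and $(\psi_j,\ldots,\psi_j)$ as trivial;
\item Lemma~\ref{lem:comut_chi}(i) applies (iii) with $\ell=c+1$ and $\gamma_{c+1}(H)=1$ to get $\gamma_{c+1}(\chi_n)\le\prod_{i<j}[H^{\psi_i},cH^{\psi_j}]$.
\end{itemize}

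Under this reading, your step ``if $\epsilon_1=\epsilon_2$ the commutator lies in $K$'' gives no information. Take the sequence $(\psi_i,\psi_i,\psi_j)$ with $H$ nonabelian of class $2$. The target is then just $[H^{\psi_i},2H^{\psi_j}]$. This lies in the kernel of $\delta$ from Proposition~\ref{prop:epichi}, so it meets $\gamma_2(H^{\psi_i})\neq1$ trivially. Knowing the commutator lies in $\gamma_2(H^{\psi_i})^{\chi_n}$ therefore does not place it in the target.

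What is missing is the argument that every non-constant sequence lands in $[H^{\psi_i},(\ell-1)H^{\psi_j}]$. Here is how to supply it.
\begin{itemize}
\item Let $k<\ell$ be the length of the initial run of equal entries, say all $\psi_i$. The commutator is then $[\gamma_k(H)^{\psi_i},H^{\psi_j},H^{\epsilon_{k+2}},\ldots,H^{\epsilon_\ell}]$.
\item By Lemma~\ref{lem:relacoes_chi}(i), $[w^{\psi_i},z^{\psi_j}]=[w^{\psi_j},z^{\psi_i}]$ for $w\in\gamma_k(H)$ and $z\in H$. Hence $[\gamma_k(H^{\psi_i}),H^{\psi_j}]=[H^{\psi_i},\gamma_k(H^{\psi_j})]$.
\item By (ii) with $m=0$, this is contained in $[H^{\psi_i},kH^{\psi_j}]$.
\item Your relabelling from (i) turns the remaining $\ell-k-1$ entries into $\psi_j$, giving containment in $[H^{\psi_i},(\ell-1)H^{\psi_j}]$.
\end{itemize}
A run of $\psi_j$'s is even easier, since $[\gamma_k(H^{\psi_j}),H^{\psi_i}]=[H^{\psi_i},\gamma_k(H^{\psi_j})]$ directly. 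Only the constant sequences contribute the $\gamma_\ell$ factors.
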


\begin{proof}
Item (i) follows from Lemma~\ref{lem:relacoes_chi}: by parts (ii)--(iii), each factor $H^{\epsilon_k}$ acts on $D_{ij}=[H^{\psi_i},H^{\psi_j}]$ by conjugation as $H^{\psi_i}$ does, which reduces the left-hand side to the case $\epsilon_k\in\{\psi_i,\psi_j\}$; this equals $[H^{\psi_i},(\ell+1)H^{\psi_j}]$ by parts (iv)--(v).

(ii). We argue by induction on $\ell$. For $\ell=1$ there is nothing to do. By inductive
hypothesis suppose the result valid for $1 \leq k \leq \ell-1$. Taking $%
x \in \gamma_{\ell-1}(H^{\psi_j}), \; y \in H^{\psi_j}, \; z \in
[H^{\psi_i},mH^{\psi_j}]$, and considering an equivalent form of Witt's
identity $[z, [x,y]]=[z,y^{-1},x^z]^y[z,x^{-1},y^{-1}]^{xy}$ and Lemma~\ref{lem:relacoes_chi}, we get  
\begin{eqnarray*}
[H^{\psi_i},mH^{\psi_j},\gamma_\ell(H^{\psi_j})] & \leq &
[H^{\psi_i},(m+1)H^{\psi_j},
\gamma_{\ell-1}(H^{\psi_j})]^{H^{\psi_j}} \times  \\ 
\ & \quad &  \times [H^{\psi_i},mH^{\psi_j},\gamma_{\ell}(H^{%
\psi_j})]^{H^{\psi_j}} \\
&\leq&[H^{\psi_i},(m+1)H^{\psi_j},
\gamma_{\ell-1}(H^{\psi_j})][H^{\psi_i},mH^{\psi_j},\gamma_{\ell}(H^{\psi_j})] \\
&\leq&[H^{\psi_i},(m+\ell)H^{\psi_j}].
\end{eqnarray*}

(iii). If $\ell \geq 2$ and $(\epsilon_1,\ldots,
\epsilon_\ell)=(\psi_i,\psi_i,\ldots,\psi_i)$ or $(\epsilon_1,\ldots, 
\epsilon_\ell)=(\psi_j,\psi_j,\ldots,\psi_j)$, then there is nothing to do. Thus,
WLOG we can assume that  
\begin{equation*}
(\epsilon_1,\ldots,\epsilon_\ell)=(\psi_i,\psi_i,\ldots,\psi_i,\psi_j,%
\epsilon_{k+2}, \ldots, \epsilon_\ell)
\end{equation*}
for some $1 \leq k \leq \ell$. Therefore,  
\begin{eqnarray*}
[H^{\epsilon_1},H^{\epsilon_2},\ldots,
H^{\epsilon_\ell}]&=&[\gamma_k(H^{\psi_i}),H^{\psi_j},H^{\epsilon_{k+2}},%
\ldots, H^{\epsilon_\ell}] \\
&=&[\gamma_k(H^{\psi_i}),H^{\psi_j},(\ell-k-1)H^{\psi_j}] 
\quad (\text{Lemma~\ref{lem:relacoes_chi}, part (iii)}) \\
&=&[\gamma_k(H)^{\psi_i},H^{\psi_j},(\ell-k-1)H^{\psi_j}] \\
&=&[\gamma_k(H)^{\psi_j},H^{\psi_i},(\ell-k-1)H^{\psi_j}] 
\quad \text{(Lemma~\ref{lem:relacoes_chi}, part(i))} \\
&=&[H^{\psi_i},\gamma_k(H^{\psi_j}),(\ell-k-1)H^{\psi_j}] \\
&=&[H^{\psi_i},\gamma_{\ell-1}(H^{\psi_j})] \qquad \text{(By item (ii))}.
\end{eqnarray*}
This finishes the proof of Lemma~\ref{lem:relnil_chi}.
\end{proof}

\begin{lemma}
\label{lem:comut_chi}  Let $\gamma_{c+1}(H)=1$. Then
\begin{itemize}
\item[(i)] $[\gamma_{c+1}(\chi_n),\gamma_2(\chi_n)]=1$;
\item[(ii)] $[\gamma_i(\chi_n),\gamma_2(\chi_n),(2c-1-i)\chi_n]=1, \;
\; \text{for all} \; i \geq 2.$
\end{itemize}
\end{lemma}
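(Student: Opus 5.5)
The plan is to follow the corresponding argument for $\chi(H)$ in \cite{grs}. The reduction of Remark~\ref{rem:n3_reduction}, via Lemmas~\ref{lem:relacoes_chi} and~\ref{lem:relnil_chi}, will let us replace every commutator in the copies by one involving only a single pair of copies $H^{\psi_i},H^{\psi_j}$.

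First we describe the lower central series of $\chi_n$. Since $\chi_n/C\cong H^n$ (Proposition~\ref{prop:epichi}) and $\gamma_{c+1}(H)=1$, we have $\gamma_{c+1}(\chi_n)\le C=\prod D_{ij}$. More precisely, by Lemma~\ref{lem:relnil_chi}(i),(iii), for $k\ge 2$,
\[
\gamma_k(\chi_n)=\prod_{t}\gamma_k(H^{\psi_t})\prod_{i<j}[H^{\psi_i},(k-1)H^{\psi_j}]\cdot(\text{normal closures}).
\]
In particular $\gamma_{c+1}(\chi_n)=\prod_{i<j}[H^{\psi_i},cH^{\psi_j}]$, up to normal closure, and these factors are normal because $D_{ij}$ is normal and the conjugation action reduces to a single pair.

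For (i), a generator of $\gamma_2(\chi_n)$ is either in some $\gamma_2(H^{\psi_t})$ or lies in some $D_{kl}$. We treat the two cases separately.
\begin{itemize}
\item \emph{Generator $w\in\gamma_2(H^{\psi_t})$.} Write $w$ as a product of commutators $[u^{\psi_t},v^{\psi_t}]$. By Lemma~\ref{lem:relacoes_chi}(iii), $D_{ij}$-elements are acted on by $H^{\psi_t}$ as by $H^{\psi_i}$, so $[x,w]$ with $x\in[H^{\psi_i},cH^{\psi_j}]$ becomes $[x,\gamma_2(H^{\psi_i})]$. Lemma~\ref{lem:relnil_chi}(ii) then puts it in $[H^{\psi_i},(c+2)H^{\psi_j}]$, which is contained in $\gamma_{c+3}$ of the image of $\chi(H)$.
\item \emph{Generator in $D_{kl}$.} If $\{k,l\}\neq\{i,j\}$, the relation $[D_{ij},L_{kl}]=1$ combined with Lemma~\ref{lem:relacoes_chi}(i) rewrites $[x,D_{kl}]$ as $[x,D_{ij}]$ up to conjugation. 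If $\{k,l\}=\{i,j\}$, we are inside $\langle H^{\psi_i},H^{\psi_j}\rangle$, an image of $\chi(H)$ (Corollary~\ref{cor:embed}).
\end{itemize}
In both cases the statement reduces to the two-copy identity $[\gamma_{c+1}(\chi(H)),\gamma_2(\chi(H))]=1$, i.e. $[[H,cH^\psi],D]=1$, proved in \cite{grs} from $[D,L]=1$ and $\gamma_{c+1}(H)=1$. Since laws pass to quotients, it holds in every pair subgroup.

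For (ii), we induct on $i$ downward from $i=c+1$ to $i=2$; for $i\ge c+1$ the statement is implied by (i). The tool is the three-subgroup lemma,
\[
[\gamma_i,\gamma_2,\chi_n]\le[\gamma_{i+1},\gamma_2][\gamma_i,\gamma_3].
\]
Iterating this $2c-1-i$ times produces commutators $[\gamma_a,\gamma_b]$ with $a+b\ge 2c+1$, hence with $\max(a,b)\ge c+1$. Each such term is $1$ by (i), since $[\gamma_a,\gamma_b]\le[\gamma_{c+1},\gamma_2]$ when $a\ge c+1$ and $b\ge 2$.

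The main obstacle is (i): making rigorous the reduction of mixed-pair commutators $[D_{ij},D_{kl}]$ to one pair. The intended step is to express $[D_{ij},D_{kl}]$ via $[D_{ij},L_{kl}]=1$: write $[a^{\psi_k},b^{\psi_l}]$ as a product of an element of $L$-type and $[a^{\psi_i},b^{\psi_j}]$-type factors. If that rewriting fails, the fallback is to bound $[D_{ij},D_{kl}]\le[D_{ij},H^{\psi_k},H^{\psi_l}]$ and relabel copies by Lemma~\ref{lem:relacoes_chi}(iii).
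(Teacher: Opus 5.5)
Your proposal is correct in substance and has the same skeleton as the paper's proof. Part (i) bounds $\gamma_{c+1}(\chi_n)\le\prod_{i<j}[H^{\psi_i},cH^{\psi_j}]$ via Lemma~\ref{lem:relnil_chi}, then uses Lemma~\ref{lem:relacoes_chi}(ii)--(iii) to bring every generator of $\gamma_2(\chi_n)$ into a single pair. Part (ii) is the iterated three-subgroup bound by $\prod_{\ell+m=2c+1}[\gamma_\ell(\chi_n),\gamma_m(\chi_n)]$, where one index is at least $c+1$; your downward induction is unnecessary, since the iteration alone suffices.

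You differ only in how the pair-level commutator is killed in (i). You import the two-copy law $[\gamma_{c+1}(\chi(H)),\gamma_2(\chi(H))]=1$ and pass it to the quotient $\langle H^{\psi_i},H^{\psi_j}\rangle$. That is legitimate, and you need not rely on \cite{grs} for it: $\gamma_{c+1}(\chi(H))\le D\cap L$, $\gamma_2(\chi(H))\le DL$ (as $\chi(H)/DL\cong H/H'$), and $[D,L]=1$. The paper instead pushes the relabelling one step further. Each letter of a conjugating word may be switched freely among the copies when acting on $D_{ij}$. So both $[u^{\psi_k},v^{\psi_l}]$ and $[u^{\psi_t},v^{\psi_t}]$ act on $D_{ij}$ as $[u^{\psi_i},v^{\psi_j}]$, while $[h^{\psi_i},y_1^{\psi_j},\dots,y_c^{\psi_j}]$ acts as $[h,y_1,\dots,y_c]^{\psi_j}=1$. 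In effect $\chi_n$ acts on $D_{ij}$ through the folding map onto $H$, and everything collapses to $[D_{ij},\gamma_{c+1}(H^{\psi_j})]=1$ with no appeal to $\chi(H)$, which makes the paper's route self-contained.

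That same letter-by-letter relabelling also disposes of the ``main obstacle'' you flag. For $x\in[H^{\psi_i},cH^{\psi_j}]$ and $g=[u^{\psi_k},v^{\psi_l}]$ one gets $x^g=x^{g'}$ with $g'$ a commutator inside the pair, so no decomposition of $g$ into $L$-type and $D_{ij}$-type factors is required. On the other hand, drop two steps: the detour through Lemma~\ref{lem:relnil_chi}(ii) in your first bullet, and the fallback $[D_{ij},D_{kl}]\le[D_{ij},H^{\psi_k},H^{\psi_l}]$. Both only place the commutator in $[H^{\psi_i},(c+2)H^{\psi_j}]\le\gamma_{c+3}$, which is not trivial in general (already $\gamma_4(\chi(C_2^4))\neq 1$ with $c=1$). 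So on their own they would not prove (i).
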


\begin{proof}
(i). By Lemma~\ref{lem:relnil_chi} we have  $%
\gamma_{c+1}(\chi_n)\leq \prod_{0 \leq i<j\leq n-1}{[H^{\psi_i},cH^{%
\psi_j}]}$. Thus,   
\begin{eqnarray*}
[\gamma_{c+1}(\chi_n),\gamma_2(\chi_n)]&\leq& \Bigl[\prod_{0 \leq
i<j\leq n-1}{[H^{\psi_i},cH^{\psi_j}]} ,\prod_{0 \leq i\leq j\leq n-1}{%
[H^{\psi_i},H^{\psi_j}]}\Bigr] \\
&\leq &\prod_{0 \leq i< j\leq n-1}\Bigl[{[H^{\psi_i},cH^{\psi_j}]} ,{%
[H^{\psi_i},H^{\psi_j}]}\Bigr] \\
&& \text{(by Lemma~\ref{lem:relacoes_chi}(ii)--(iii) and Lemma~\ref{lem:relnil_chi}(iii))} \\
&=& \prod_{0 \leq i< j\leq n-1}\Bigl[{[H^{\psi_i},H^{\psi_j}]},{%
[H^{\psi_i},cH^{\psi_j}]} \Bigr] \\
&=& \prod_{0 \leq i< j\leq n-1}\Bigl[{[H^{\psi_i},H^{\psi_j}]},{%
[H^{\psi_j},cH^{\psi_j}]} \Bigr] \\
&=& 1.
\end{eqnarray*}

(ii). Using the Three Subgroups Lemma we
obtain for $i \geq 2:$ 
\begin{equation*}
[\gamma_i(\chi_n),\gamma_2(\chi_n),(2c-1-i)\chi_n] \leq \prod_{
\ell+m=2c+1, \; \ell, m \geq 2} {[\gamma_\ell(\chi_n),\gamma_m(\chi_n)]}.
\end{equation*}
For $m \geq c+1$ or $\ell \geq c+1$, the result follows by the
first part.
\end{proof}

\begin{lemma}
\label{lem:permut_chi} Let $\gamma_{c+1}(H)=1$. Then,
\begin{equation*}
[h_1^{\epsilon_1},h_2^{\epsilon_2},
\ldots,h_{2c+1}^{\epsilon_{2c+1}}]=[h_1^{\epsilon_1},h_2^{\epsilon_2},h_{%
\sigma(3)}^{\epsilon_{\sigma(3)}},
\ldots,h_{\sigma(2c+1)}^{\epsilon_{\sigma(2c+1)}}]
\end{equation*}
for all $h_i \in H$, $\epsilon_i \in \{\psi_i,\psi_j\}$ and any permutation $\sigma$ of  $\{3,4, \ldots , 2c+1\}.$
\end{lemma}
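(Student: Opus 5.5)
It suffices to show that any two adjacent entries in positions $k,k+1$ with $3\le k\le 2c$ may be swapped; since adjacent transpositions generate the symmetric group on $\{3,\ldots,2c+1\}$, the general permutation then follows by composing such swaps. Write
\[
u=[h_1^{\epsilon_1},\ldots,h_{k-1}^{\epsilon_{k-1}}], \qquad a=h_k^{\epsilon_k},\qquad b=h_{k+1}^{\epsilon_{k+1}}.
\]
Because $k-1\ge 2$, we have $u\in\gamma_{k-1}(\chi_n)$, and $u$ lies in some $D_{ij}$-generated piece, namely a product of the subgroups $[H^{\psi_i},(k-2)H^{\psi_j}]$ by Lemma~\ref{lem:relnil_chi}. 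The key identity to use is the Hall--Witt consequence
\[
[u,a,b]=[u,b,a]\,[u,[a,b]]\cdot(\text{correction terms}),
\]
where the correction terms lie in $[\gamma_k(\chi_n),\gamma_{k}(\chi_n)]$ or are commutators of higher weight. Conjugation corrections of the form $[u,a,b]^{w}$ with $w\in\gamma_{k+1}$ contribute elements of $[\gamma_{k+1},\gamma_{k+1}]$. Bracketing both sides with the remaining $2c+1-(k+1)$ entries then gives
\[
[u,a,b,x_{k+2},\ldots]=[u,b,a,x_{k+2},\ldots]\cdot[u,[a,b],x_{k+2},\ldots]\cdot(\ldots),
\]
using that long commutators are multiplicative modulo commutators of higher weight.

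Next I would show that the extra factor $[u,[a,b],x_{k+2},\ldots,x_{2c+1}]$ is trivial. Here $u\in\gamma_{k-1}(\chi_n)$, $[a,b]\in\gamma_2(\chi_n)$, and there are $2c-k$ further entries. With $i=k-1\ge 2$, the number of extra entries needed by Lemma~\ref{lem:comut_chi}(ii) is $2c-1-i=2c-k$, which matches exactly. Hence $[\gamma_{k-1}(\chi_n),\gamma_2(\chi_n),(2c-k)\chi_n]=1$ kills this factor. The remaining correction terms lie in $[\gamma_\ell,\gamma_m]$ with $\ell+m\ge 2c+2$, so one of $\ell,m$ is at least $c+1$. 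Lemma~\ref{lem:comut_chi}(i) then gives $[\gamma_{c+1}(\chi_n),\gamma_2(\chi_n)]=1$, and these terms vanish as well.

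The main obstacle is the bookkeeping of the expansion. To pass from $[u,a,b]\equiv[u,b,a][u,[a,b]]$ to the full $(2c+1)$-fold commutator, one must bracket a product with further elements, which produces cross terms of the form $[\,\cdot\,,\,\cdot\,]$ between two elements of $\gamma_{k+1}$. These need weight $\ge 2c+2$ with one side of weight $\ge c+1$. I would handle this by working inside $\gamma_{k+1}(\chi_n)$ modulo $N=[\gamma_{k+1}(\chi_n),\gamma_2(\chi_n)]$ and its iterated brackets with $\chi_n$, all of which vanish at total weight $2c+1$ by Lemma~\ref{lem:comut_chi}. Modulo these, the map $y\mapsto[y,x_{k+2},\ldots,x_{2c+1}]$ is a homomorphism. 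The restriction $\epsilon_i\in\{\psi_i,\psi_j\}$ is not needed for the swap itself; it only keeps the setting within a single pair, as in Remark~\ref{rem:n3_reduction}.
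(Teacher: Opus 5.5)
Your proposal is correct and follows essentially the same route as the paper. The adjacent swap holds modulo $[\gamma_{k-1}(\chi_n),\gamma_2(\chi_n)]$ (the paper cites Levin's Theorem~2.1 where you expand directly), and Lemma~\ref{lem:comut_chi}(ii) with $i=k-1$ kills the discrepancy after the remaining $2c-k$ brackets. Your ``main obstacle'' also goes away: for the normal subgroup $M=[\gamma_{k-1}(\chi_n),\gamma_2(\chi_n)]$, $y\equiv y' \pmod{M}$ already gives $[y,x]\equiv[y',x] \pmod{[M,\chi_n]}$, so iterating lands in $[M,(2c-k)\chi_n]=1$ with no homomorphism argument needed.
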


\begin{proof}
We have, for all $i \in \{3,\ldots, 2c+1\}$, that
\begin{equation*}
[h_1^{\epsilon_1},h_2^{\epsilon_2},
\ldots,[h_{i-1}^{\epsilon_{i-1}},h_{i}^{\epsilon_{i}}]]\equiv 1 \pmod{[\gamma_{i-2}(\chi_n),\gamma_2(\chi_n)]}, 
\end{equation*}
and thus, by Levin~\cite[Theorem 2.1]{Levin} we obtain
\begin{equation*}
[h_1^{\epsilon_1},h_2^{\epsilon_2},
\ldots,h_i^{\epsilon_i},h_{i-1}^{\epsilon_{i-1}}]\equiv
[h_1^{\epsilon_1},h_2^{\epsilon_2},
\ldots,h_{i-1}^{\epsilon_{i-1}},h_{i}^{\epsilon_{i}}] \pmod{[\gamma_{i-2}(\chi_n),\gamma_2(\chi_n)]}.
\end{equation*}
Now, by Lemma~\ref{lem:comut_chi} item (ii),  
\begin{equation*}
[h_1^{\epsilon_1},h_2^{\epsilon_2},
\ldots,h_i^{\epsilon_i},h_{i+1}^{\epsilon_{i+1}},\ldots,h_{(2c+1)}^{%
\epsilon_{(2c+1)}}]= [h_1^{\epsilon_1},h_2^{\epsilon_2},
\ldots,h_{i+1}^{\epsilon_{i+1}},h_{i}^{\epsilon_{i}},\ldots,h_{(2c+1)}^{%
\epsilon_{(2c+1)}}]. 
\end{equation*}
Therefore,  
\begin{equation*}
[h_1^{\epsilon_1},h_2^{\epsilon_2},
\ldots,h_{2c+1}^{\epsilon_{2c+1}}]=[h_1^{\epsilon_1},h_2^{\epsilon_2},h_{%
\sigma(3)}^{\epsilon_{\sigma(3)}},
\ldots,h_{\sigma(2c+1)}^{\epsilon_{\sigma(2c+1)}}] 
\end{equation*}
for all permutations $\sigma$ of $\{3,4, \ldots , 2c+1\}$.
\end{proof}

\begin{lemma}
\label{lem:nil_chi}  If $H$ is a locally nilpotent group, then $\chi(n,H)$ is also locally nilpotent.
\end{lemma}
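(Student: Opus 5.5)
The reduction is to finitely generated subgroups. Local nilpotency is a property of finitely generated subgroups, so fix a finitely generated subgroup $K \leq \chi(n,H)$. Each generator of $K$ is a word in finitely many elements of the copies $H^{\psi_i}$. Hence there is a finitely generated subgroup $H_0 \leq H$ such that $K \leq \langle H_0^{\psi_0}, \ldots, H_0^{\psi_{n-1}} \rangle =: M$. Since $H$ is locally nilpotent, $H_0$ is nilpotent, say of class $c$, and it is finitely generated. It therefore suffices to show that $M$ is nilpotent, for then $K$ is nilpotent.

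The key point is that $M$ is an epimorphic image of $\chi(n,H_0)$. The assignment $h^{\psi_i} \mapsto h^{\psi_i}$ for $h \in H_0$ sends every defining relator of $\chi(n,H_0)$ to a defining relator of $\chi(n,H)$ involving only elements of $H_0$. This holds both for the weak commutativity relators and for the relators $[D_{ij}(H_0), L_{kl}(H_0)]$. Hence the assignment extends to an epimorphism $\chi(n,H_0) \twoheadrightarrow M$. We are thus reduced to proving that $\chi(n,H_0)$ is nilpotent when $H_0$ is a finitely generated nilpotent group.

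For this, use the structure from Proposition~\ref{prop:epichi}. The quotient $\chi(n,H_0)/C \cong H_0^n$ is nilpotent of class $c$, so $\gamma_{c+1}(\chi(n,H_0)) \leq C = \prod D_{ij}$. Next apply Lemma~\ref{lem:relnil_chi}(i): every commutator $[H^{\psi_i},H^{\psi_j},H^{\epsilon_1},\ldots,H^{\epsilon_\ell}]$ collapses to $[H^{\psi_i},(\ell+1)H^{\psi_j}]$, which lies in $\langle H_0^{\psi_i},H_0^{\psi_j}\rangle$. By Corollary~\ref{cor:embed}, that subgroup is an epimorphic image of $\chi(H_0)$. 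Since $\chi$ preserves local nilpotency (item (v) of the list, \cite{grs}), $\chi(H_0)$ is finitely generated and locally nilpotent, hence nilpotent, say of class $N$. Therefore $[H^{\psi_i},(\ell+1)H^{\psi_j}] = 1$ once $\ell+2 > N$.

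To finish, expand $\gamma_r(\chi(n,H_0))$. It is generated, modulo higher terms, by simple commutators in the generators drawn from the copies. A simple commutator of weight $r$ whose first two entries come from the same copy lies in $\gamma_2(H_0^{\psi_i})$. To handle the general word I would instead use Lemma~\ref{lem:relnil_chi}(iii), which bounds every such commutator by $[H^{\psi_i},(r-1)H^{\psi_j}]$ times normal closures of $\gamma(H^{\psi_i})$. Iterating with Lemma~\ref{lem:relacoes_chi}(ii)--(iii), every conjugating copy acts on $D_{ij}$ like $H^{\psi_i}$. This reduces each term to commutators living inside a single pair subgroup $\langle H_0^{\psi_i},H_0^{\psi_j}\rangle$.

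\textbf{Main obstacle.} The hard part is the bookkeeping of this last step: one must show that $\gamma_{r}(\chi(n,H_0))$ is contained in the product of the $\gamma_{r}$'s of the pair subgroups. Equivalently, a commutator mixing three or more copies must reduce to one in two copies. Lemma~\ref{lem:relacoes_chi}(iii) gives exactly this reduction once the first commutator lands in some $D_{ij}$. The remaining case, where the first entries lie in one copy, is absorbed because $\gamma_{c+1}(H_0)=1$ forces a change of copy within $c+1$ steps. Granting this, $\gamma_{c+1+N}(\chi(n,H_0))=1$, so $\chi(n,H_0)$ is nilpotent. Its quotient $M$, and hence $K$, is then nilpotent, which proves that $\chi(n,H)$ is locally nilpotent.
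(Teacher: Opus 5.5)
Your proof is correct, but it takes a different route from the paper's. The paper works directly in $\chi(n,H)$ with a finite support $\{h_1,\dots,h_m\}$. After the reduction of Lemma~\ref{lem:relnil_chi}, it kills the two-copy Engel-type commutators $[x^{\psi_i},y^{\psi_j},z_1^{\psi_j},\dots,z_{c^*}^{\psi_j}]$ by explicit computation. Lemmas~\ref{lem:comut_chi} and~\ref{lem:permut_chi} (via Levin's theorem) reorder the entries so that one of them is repeated at least $2c$ times. A Jacobi-congruence argument with Lemma~\ref{lem:relacoes_chi}(iv) then gives $[x^{\psi_i},y^{\psi_j},kz^{\psi_j}]=1$ for $k\geq 2c$. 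In effect the paper re-runs the argument of \cite{grs} with $n$ copies.

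You instead treat the two-copy case as a black box. $\chi(H_0)$ is nilpotent by \cite{grs}, so by Corollary~\ref{cor:embed} so is each pair subgroup $P_{ij}=\langle H_0^{\psi_i},H_0^{\psi_j}\rangle$. The extra relations of $\chi(n,H)$ enter only through the relabelling $[d,z^{\psi_l}]=[d,z^{\psi_i}]$ for $d\in D_{ij}$, which follows from $[D_{ij},L_{il}]=1$ and the normality of $D_{ij}$. You also rightly use only a surjection $\chi(n,H_0)\twoheadrightarrow M$, not an embedding, which is just as well given Remark~\ref{rem:embed_fails}.

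Your route is shorter and proves more. The nilpotency class of $\chi(n,H_0)$ is bounded by that of $\chi(H_0)$, independently of $n$, so any class bound for $\chi(H)$ transfers verbatim to $\chi(n,H)$. This is exactly the philosophy of Remark~\ref{rem:n3_reduction}. The paper's route, in exchange, does not need \cite{grs} as an input, and it produces an explicit Engel-type identity inside $\chi(n,H)$.

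Your final step should be tightened, though nothing in it actually fails.
\begin{itemize}
\item The detour through Lemma~\ref{lem:relnil_chi}(iii), with its normal closures of $\gamma(H^{\psi_i})$, does not by itself give nilpotency and can be deleted.
\item The claim that a commutator whose first two entries share a copy lies in $\gamma_2(H_0^{\psi_i})$ is inaccurate. It is not needed either.
\item The leading one-copy block needs no appeal to $\gamma_{c+1}(H_0)=1$. Whatever its length, it evaluates to an element of $H_0^{\psi_i}$. The first entry from another copy $H_0^{\psi_j}$ puts the commutator into $D_{ij}$, and from then on every later entry can be relabelled into $H_0^{\psi_i}$. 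So every simple commutator of weight $r$ with entries in $\bigcup_i H_0^{\psi_i}$ lies in $\gamma_r(P_{ij})$ for some pair.
\item Replace \emph{generated modulo higher terms} by the fact that $\gamma_r(G)$ is the normal closure of the simple commutators $[x_1,\dots,x_r]$ with entries in a generating set. The modulo version only gives $\gamma_r=\gamma_{r+1}$, which would presuppose nilpotency.
\end{itemize}
With these changes your main obstacle is fully settled, and you get $\gamma_{N+1}(\chi(n,H_0))=1$, which is sharper than your $\gamma_{c+1+N}$.
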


\begin{proof}
Let $g_1,g_2, \ldots,g_r$ be elements of $\chi(n,H)$ and $%
\{h_1,h_2,\ldots,h_m\}$ be their support in $H$. Consider $m \geq 2$ and assume that the subgroup 
$\langle h_1,h_2, \ldots, h_m \rangle \leq H$ is nilpotent, of class at most $c$, say. To show that the subgroup $\langle g_1,g_2,\ldots,g_r \rangle \leq \chi(n,H)$ is nilpotent, by Lemma~\ref{lem:relnil_chi} it 
suffices to show that 
\begin{equation*}
[x^{\psi_i},y^{\psi_j},z_1^{\psi_j},\ldots ,z_{c^*}^{\psi_j}]=1 
\end{equation*}
for some $c^*> c$ and all $\; 0 \leq i \neq j \leq n-1, \;x,y,z_i
\in \langle h_1, \ldots, h_m \rangle$.  To this end, let $c^* \geq 2 c m$; by Lemmas \ref{lem:comut_chi} and  \ref{lem:permut_chi},
 the commutator $[x^{\psi_i},y^{\psi_j},z_1^{\psi_j},\ldots
,z_{c^*}^{\psi_j}] $ can be rewritten as a product of commutators of the form  
\begin{equation*}
[x^{\psi_i},y^{\psi_j},k_1{h^{\prime }}_1^{\psi_j},\ldots ,k_{m}{h^{\prime }}%
_m^{\psi_j}], 
\end{equation*}
where $\{{h^{\prime }_1}, \ldots, {h^{\prime }}_m \}=\{{h_1}, \ldots, {h}_m
\}$, $k_1\geq \ldots \geq k_m \geq 0$ and $\sum_{i=1}^{m}k_i \geq c^* \geq 2cm$%
. Now we have that $k_1 \geq 2c$, so it suffices to show that $[x^{\psi_i},y^{%
\psi_j},kz^{\psi_j}]=1$ for all $k \geq 2c$ and $x,y , z \; \in \;
\langle h_1,\ldots, h_m \rangle$.  Let $\overline{H}=\langle x,y,z\rangle$.
Using the Jacobi congruence and the hypothesis
that $\gamma_{c+1}(\overline{H}) = 1$, we can write   
\begin{equation*}
[x^{\psi_i},y^{\psi_j},z^{\psi_j}]\equiv[x^{\psi_i},z^{\psi_j},y^{\psi_j}][{%
x^{\psi_i},[y^{\psi_j},z^{\psi_j}]}] \; \text{mod} \;
(\gamma_2(\gamma_2(\langle x^{\psi_i},y^{\psi_j},z^{\psi_j}\rangle ))).
\end{equation*}
By  Lemma~\ref{lem:comut_chi} we have   
\begin{multline*}
[x^{\psi_i},y^{\psi_j},z^{\psi_j},(k-1)z^{\psi_j}] \\
=[x^{\psi_i},z^{\psi_j},y^{\psi_j},(k-1)z^{\psi_j}]\,[x^{\psi_i},[y^{\psi_j},z^{\psi_j}],(k-1)z^{\psi_j}].
\end{multline*}
and
\begin{eqnarray}  \label{eq:jacobi-mod}
[x^{\psi_i},[y^{\psi_j},z^{\psi_j}]]\equiv[z^{\psi_j},y^{\psi_j},x^{\psi_i}]%
\; \text{mod}\; (\gamma_2(\gamma_2(\langle x^{\psi_i},y^{\psi_j},z^{\psi_j} \rangle))).
\end{eqnarray}
Using \eqref{eq:jacobi-mod} and Lemma~\ref{lem:comut_chi} again, we have  
\begin{eqnarray*}
{\ [x^{\psi_i},[y^{\psi_j},z^{\psi_j}],(k-1)z^{\psi_j}]}&=&[z^{\psi_j},y^{%
\psi_j},x^{\psi_i},(k-1)z^{\psi_j}] \\
&=&[z^{\psi_j},y^{\psi_j},(k-1)z^{\psi_j},x^{\psi_i}] \quad (\text{ Lemma %
\ref{lem:permut_chi}}) \\
&=&1.
\end{eqnarray*}
Therefore,  
\begin{align*}
[x^{\psi_i},y^{\psi_j},z^{\psi_j},(k-1)z^{\psi_j}] &= [x^{\psi_i},z^{\psi_j},y^{\psi_j},(k-1)z^{\psi_j}] \\
&= [z^{\psi_i},x^{\psi_j},y^{\psi_j},(k-2)z^{\psi_j},z^{\psi_j}]^{-1} \\
&= [z^{\psi_j},x^{\psi_j},y^{\psi_j},(k-2)z^{\psi_j},z^{\psi_i}]^{-1} \quad \text{(Lemma~\ref{lem:relacoes_chi}, (iv))} \\
&= 1,
\end{align*}
which proves Lemma~\ref{lem:nil_chi}.
\end{proof}

We close this subsection by establishing Theorem~\hyperref[th:thmC]{C} for $\chi(n,H)$, first obtained in \cite{oliveira-thesis}.

\begin{theorem}\label{th:chinasthngnil_chi}
Let $H$ be an $m$-generated nilpotent group of class at most $c$, with $m \geq 2$ and $c\geq 1$.
\begin{itemize}
    \item[(i)] If $m = 2$, then $\gamma_{c+2}(\chi(n,H))=1$;
    \item[(ii)] If $2 < m \leqslant c+2$, then $\gamma_{c+3}(\chi(n,H))=1$;
    \item[(iii)] If  $m \geqslant c+3$, then  $\gamma_{c+3}(\chi(n,H))$ is an elementary abelian $2$-group of rank at most
\begin{equation*}
\sum_{k=c+3}^{m}\binom{m}{k}\binom{n}{2}.
\end{equation*}
\end{itemize}
\end{theorem}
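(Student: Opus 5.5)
The plan is to reduce everything to the two-copy groups $\langle H^{\psi_i},H^{\psi_j}\rangle$, where the corresponding statements for $\chi(H)$ are known from \cite{grs}, and then to assemble the pairs using Lemma~\ref{lem:relacoes_chi} and Lemma~\ref{lem:relnil_chi}. Write $G=\chi(n,H)$. Since $\gamma_2(G)$ is generated, modulo the $\gamma_2(H^{\psi_t})$, by the normal subgroups $D_{ij}$ (Proposition~\ref{prop:epichi}), a weight-$k$ commutator in the generators of $G$ expands into products of commutators $[H^{\epsilon_1},\dots,H^{\epsilon_k}]$. By Lemma~\ref{lem:relnil_chi}(i),(iii), each of these lies in $[H^{\psi_i},(k-1)H^{\psi_j}]$ times terms $\gamma_k(H^{\psi_t})$. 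For $k\geq c+2$ the latter vanish, so
\[
\gamma_k(G)=\prod_{0\leq i<j\leq n-1}[H^{\psi_i},(k-1)H^{\psi_j}] .
\]
Each factor is contained in $\gamma_k(\langle H^{\psi_i},H^{\psi_j}\rangle)$, which is the image of $\gamma_k(\chi(H))$ under the epimorphism $\lambda_{ij}$ of Corollary~\ref{cor:embed}.

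For (i) and (ii), I would invoke the theorem of \cite{grs} for $\chi(H)$: if $m=2$ then $\gamma_{c+2}(\chi(H))=1$, and if $2<m\leq c+2$ then $\gamma_{c+3}(\chi(H))=1$. Since laws and identities of this type pass to quotients, $[H^{\psi_i},(c+1)H^{\psi_j}]=1$ (respectively $[H^{\psi_i},(c+2)H^{\psi_j}]=1$) for every pair. The displayed product then gives $\gamma_{c+2}(G)=1$ (respectively $\gamma_{c+3}(G)=1$). The only care needed is that the reduction through Lemma~\ref{lem:relnil_chi}(iii) is legitimate for commutators involving more than two copies. This is exactly Lemma~\ref{lem:relacoes_chi}(iii): a third copy acts on $D_{ij}$ as $H^{\psi_i}$ does, so any commutator whose first two entries lie in distinct copies collapses into one pair.

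For (iii), each factor $[H^{\psi_i},(c+2)H^{\psi_j}]$ is an image of $\gamma_{c+3}(\chi(H))$. By \cite{grs}, $\gamma_{c+3}(\chi(H))$ is an elementary abelian $2$-group of rank at most $\sum_{k=c+3}^{m}\binom{m}{k}$, generated by commutators $[a_r^{\psi},a_{i_1},\dots,a_{i_t}]$ with distinct generators $a_{i_s}$ of $H$. Thus each factor is elementary abelian of rank at most that sum. The factors are normal in $G$, and they commute pairwise: by Lemma~\ref{lem:comut_chi}(i), $[\gamma_{c+1}(G),\gamma_2(G)]=1$, and $\gamma_{c+3}(G)\leq\gamma_{c+1}(G)\cap\gamma_2(G)$. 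Hence their product is an abelian group generated by involutions, i.e.\ elementary abelian, of rank at most $\binom{n}{2}\sum_{k=c+3}^{m}\binom{m}{k}$, as Remark~\ref{rem:n3_reduction}(iii) anticipates.

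The main obstacle is making the first reduction rigorous at the level of subgroups rather than single commutators. I need $\gamma_k(G)$ to be exactly generated by the pair-subgroups $[H^{\psi_i},(k-1)H^{\psi_j}]$ for $k\geq c+2$, including mixed terms such as $[\gamma_2(H^{\psi_i}),H^{\psi_j},\dots]$ and the normal closures $\gamma(H^{\psi_t})^{G}$ appearing in Lemma~\ref{lem:relnil_chi}(iii). I would handle this by induction on $k$, using the normality of $D_{ij}$ and Lemma~\ref{lem:relnil_chi}(ii) to absorb $\gamma_\ell(H^{\psi_j})$ entries into longer simple commutators in $H^{\psi_j}$.
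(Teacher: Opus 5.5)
Your argument is correct, but it takes a different route from the paper.

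\textbf{What the paper does.} The paper never invokes the $\chi(H)$-theorem of \cite{grs}. It redoes the GRS commutator calculus inside $\chi(n,H)$:
\begin{itemize}
\item From Lemma~\ref{lem:relacoes_chi}(v) it extracts the identity $[x^{\psi_i},y_2^{\psi_j},\dots,y_{c+1}^{\psi_j},x^{\psi_j}]=1$.
\item For $m=2$ it shows $\gamma_{c+2}=\gamma_{c+3}$ and then uses the nilpotency supplied by Lemma~\ref{lem:nil_chi}.
\item For (ii)--(iii) it linearizes the identity modulo $\gamma_{c+4}$. This kills weight-$(c+3)$ commutators with a repeated entry and makes the later entries alternating up to sign, which gives exponent $2$ and the count $\binom{m}{k}\binom{n}{2}$.
\end{itemize}

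\textbf{What you do.} You show $\gamma_k(\chi(n,H))=\langle \gamma_k(\langle H^{\psi_i},H^{\psi_j}\rangle) : i<j\rangle$. You then push the $n=2$ theorem through the epimorphisms $\lambda_{ij}$ of Corollary~\ref{cor:embed}, and in (iii) use Lemma~\ref{lem:comut_chi}(i) to make $\gamma_{c+3}$ abelian.

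\textbf{Comparison.} Your route is shorter and turns Remark~\ref{rem:n3_reduction} into an actual proof. It also shows that any bound on the lower central series of $\chi(H)$ transfers to $\chi(n,H)$, with ranks multiplied by $\binom{n}{2}$. Its price is that it is only as good as the black box: you must make sure \cite{grs} states exactly (i)--(iii) for $\chi(H)$. The paper's argument is self-contained and recovers that case as $n=2$.

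\textbf{On the obstacle you flag.} It is lighter than you fear. You never need the exact form $\prod[H^{\psi_i},(k-1)H^{\psi_j}]$, nor Lemma~\ref{lem:relnil_chi}(ii),(iii). The argument runs as follows.
\begin{itemize}
\item $\gamma_k(G)$ is generated by simple commutators $[x_1,\dots,x_l]$ with $l\geq k$ and entries in the inverse-closed set $\bigcup_t H^{\psi_t}$.
\item If all entries lie in one copy $H^{\psi_t}$, the commutator lies in $\gamma_l(H^{\psi_t})$.
\item Otherwise, let $x_1,\dots,x_r\in H^{\psi_i}$ and let $x_{r+1}\in H^{\psi_j}$ be the first entry outside $H^{\psi_i}$. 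Then $[x_1,\dots,x_{r+1}]\in D_{ij}$.
\item $D_{ij}$ is normal, and $d^{z^{\psi_k}}=d^{z^{\psi_i}}$ for $d\in D_{ij}$ (from $[D_{ij},L_{ik}]=1$). So every later entry from a third copy $H^{\psi_k}$ can be relabelled into $H^{\psi_i}$ without changing the value.
\item The commutator therefore lies in $\gamma_l(\langle H^{\psi_i},H^{\psi_j}\rangle)$.
\end{itemize}
This gives the generation statement for every $k\geq 2$.
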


\begin{proof}
Item (i). Since $H$ is a nilpotent group of class at most $c \geq 1$, by Lemma~\ref%
{lem:relacoes_chi} (v) the group $\chi(n,H)$ satisfies the identity
\begin{equation}  \label{eq:ident}
[x^{\psi_i},y_2^{\psi_j},\ldots y_{c+1}^{\psi_j},x^{\psi_j}]=1
\end{equation}
for all $x, \;y_i \; \in H$. Moreover, if $H=\langle x,y \rangle$ then, modulo $%
\gamma_{c+3}(\chi(n,H))$ we see that $\gamma_{c+2}(\chi(n,H))$ is generated by
elements of the form $[x^{\psi_i},z_2^{\psi_j},\ldots
z_{c+1}^{\psi_j},x^{\psi_j}]$, where $z_i \in \{x,y \}$ and $0 \leq i \neq j \leq n-1$, which are trivial by~\eqref{eq:ident}.
Thus,  $\gamma_{c+3}(\chi(n,H))= \gamma_{c+2}(\chi(n,H))$ and so, being $%
\chi(n,H)$ a nilpotent group, it follows that $\gamma_{c+2}(\chi(n,H))=1$. 

\noindent
Item (ii). In this case, where $\gamma _{c+1}(H)=1$, identity~\eqref{eq:ident}
modulo $\gamma _{c+3}(\chi(n,H))$ gives us
\begin{equation}
1\equiv \lbrack x^{\psi_i}y_{1}^{\psi_j},\ldots ,y_{c+1}^{\psi_{j}},x^{\psi_j}y_{1}^{\psi_j}]\equiv \lbrack x^{\psi
^{i}},y_{2}^{\psi_j},\ldots ,y_{c+1}^{\psi_j},y_{1}^{\psi_j}][y_{1}^{\psi_i},y_{2}^{\psi_j}\ldots ,y_{c+1}^{\psi_j},x^{\psi_j}].  \label{eq:indent2}
\end{equation}%
Taking the commutator with $x^{\psi_j}$ and using~\eqref{eq:ident}
 we obtain
\begin{equation}
\lbrack y_{1}^{\psi_i},y_{2}^{\psi_j}\ldots ,y_{c+1}^{\psi_{j}},x^{\psi_j},x^{\psi_j}]\equiv 1\quad (\text{mod}\,\gamma
_{c+4}(\chi(n,H)))  \label{eq:identmodgama4}
\end{equation}%
for all $x,\;y_{i}\;\in H$. Moreover, modulo $\gamma _{c+4}(\chi(n,H))$, for $2\leq k\leq c$ we have 
\begin{tabular}{rcl}
\multicolumn{3}{l}{$\bigl[\lbrack y_{1}^{\psi_i},y_{2}^{\psi_j},\ldots
,y_{k}^{\psi_j}],[x^{\psi_j},y_{k+1}^{\psi_j}],y_{k+2}^{\psi_{j}},\ldots ,y_{c+1}^{\psi_j},x^{\psi_j}\bigr]$} \\ 
& $\equiv $ & $\bigl[\lbrack y_{1}^{\psi_i},y_{2}^{\psi_j},\ldots
,y_{k}^{\psi_j}],[x^{\psi_i},y_{k+1}^{\psi_j}],y_{k+2}^{\psi
^{j}},\ldots ,y_{c+1}^{\psi_j},x^{\psi_j}\bigr]$ \\ 
& $\equiv $ & $\bigl[\lbrack x^{\psi_i},y_{k+1}^{\psi_j}],[y_{1}^{\psi_{j}},y_{2}^{\psi_j},\ldots ,y_{k}^{\psi_j}],y_{k+2}^{\psi_{j}},\ldots ,y_{c+1}^{\psi_j},x^{\psi_j}\bigr]^{-1}$ \\ 
& $\equiv $ & $\bigl[\lbrack x^{\psi_j},y_{k+1}^{\psi_j}],[y_{1}^{\psi
^{j}},y_{2}^{\psi_j},\ldots ,y_{k}^{\psi_j}],y_{k+2}^{\psi
^{j}},\ldots ,y_{c+1}^{\psi_j},x^{\psi_i}\bigr]^{-1}$ \quad \text{(by eq.~\ref{eq:ident})} \\ 
& $\equiv$ & 1.
\end{tabular}
\newline
We record this as 
\begin{equation}
\bigl[\lbrack y_{1}^{\psi_i},y_{2}^{\psi_j},\ldots ,y_{k}^{\psi_{j}}],[x^{\psi_j},y_{k+1}^{\psi_j}],y_{k+2}^{\psi_j},\ldots
,y_{c+1}^{\psi_j},x^{\psi_j}\bigr]\equiv 1\;\;\text{mod}\;\gamma
_{c+4}(\chi(n,H))  \label{eq:indentcomutmod4}
\end{equation}%
for all $x,\;y_{i}\;\in H,\;0\leq i<j\leq \;n-1$ and $2\leq k\leq c$. Then, by
equation \ref{eq:indentcomutmod4}, for $2\leq k\leq c$ we have\newline
\begin{tabular}{rcl}
\multicolumn{3}{l}{$\lbrack y_{1}^{\psi_i},y_{2}^{\psi_j},\ldots
,y_{k}^{\psi_j},x^{\psi_j},y_{k+1}^{\psi_j},y_{k+2}^{\psi_{j}},\ldots ,y_{c+1}^{\psi_j},x^{\psi_j}]$} \\ 
& $\equiv $ & $[y_{1}^{\psi_i},y_{2}^{\psi_j},\ldots ,y_{k}^{\psi_{j}},x^{\psi_j},y_{k+1}^{\psi_j},y_{k+2}^{\psi_j},\ldots
,y_{c+1}^{\psi_j},x^{\psi_j}]$ \\ 
& $\equiv $ & $[y_{1}^{\psi_i},y_{2}^{\psi_j},\ldots ,y_{k+1}^{\psi_{j}},x^{\psi_j},y_{k+2}^{\psi_j},\ldots ,y_{c+1}^{\psi_j},x^{\psi_{j}}]$ \\ 
& $\vdots $ &  \\ 
& $\equiv $ & $[y_{1}^{\psi_i},y_{2}^{\psi_j},\ldots ,y_{c+1}^{\psi
^{j}},x^{\psi_j},x^{\psi_j}]$ \\ 
& $\equiv $ & 1 \, (by eq.~\ref{eq:identmodgama4}).
\end{tabular}

\noindent
Moreover, by~\eqref{eq:ident} we get
\begin{equation*}
\lbrack y_{1}^{\psi_i},x^{\psi_j},y_{2}^{\psi_j},\ldots
,y_{c+1}^{\psi_j},x^{\psi_j}]\equiv \lbrack x^{\psi_i},y_{1}^{\psi
^{j}},y_{2}^{\psi_j},\ldots ,y_{c+1}^{\psi_j},x^{\psi
^{j}}]^{-1}\equiv 1.
\end{equation*}%
Thus,  
\begin{equation}
\lbrack y_{1}^{\psi_i},y_{2}^{\psi_j},\ldots ,y_{k}^{\psi_{j}},x^{\psi_j},y_{k+1}^{\psi_j},y_{k+2}^{\psi_j},\ldots
,y_{c+1}^{\psi_j},x^{\psi_j}]\equiv 1 \pmod{\gamma _{c+4}(\chi(n,H))},  \label{eq:identgeral}
\end{equation}%
for all $1\leq k\leq c+1$. Replacing $x$ by $xz$ in equation \ref%
{eq:identgeral} and expanding modulo $\gamma _{c+4}(\chi(n,H))$,
the following congruence holds:
\begin{eqnarray}
\lbrack y_{1}^{\psi_i},y_{2}^{\psi_j},\ldots ,y_{k}^{\psi
^{j}},x^{\psi_j},y_{k+1}^{\psi_j},y_{k+2}^{\psi_j},\ldots
,y_{c+1}^{\psi_j},z^{\psi_j}] &\equiv &  \label{eq:identgeral2} \\
\lbrack y_{1}^{\psi_i},y_{2}^{\psi_j},\ldots ,y_{k}^{\psi
^{j}},z^{\psi_j},y_{k+1}^{\psi_j},y_{k+2}^{\psi_j},\ldots
,y_{c+1}^{\psi_j},x^{\psi_j}]^{-2}. &&  \notag
\end{eqnarray}%
Using  this last congruence we
obtain that all commutators of degree $c+3$ in $\chi(n,H)$ with a
double entry $x$ can be expressed, modulo $\gamma _{c+4}(\chi(n,H))$, 
 as a product of commutators of the form 
\begin{equation*}
\lbrack y_{1}^{\psi_i},y_{2}^{\psi_j},\ldots ,y_{k}^{\psi
^{j}},x^{\psi_j},y_{k+1}^{\psi_j},y_{k+2}^{\psi_j},\ldots
,y_{c+1}^{\psi_j},x^{\psi_j}],\quad 1\leq k\leq c+1,
\end{equation*}%
which are trivial by~\eqref{eq:identgeral}. In particular, being $H$ $m$-generated  
with $\gamma _{c+1}(H)=1$ and $m\leq c+2$, we find that $\gamma _{m+3}(\chi(n,H))=\gamma _{m+4}(\chi(n,H))=\ldots =1$. Therefore, by 
 Lemma~\ref{lem:permut_chi}, item (ii) is finally proven.

\noindent
As for item (iii), considering again that $H$ is nilpotent of class at most $c$, congruence~\eqref{eq:identgeral2} leads to
\begin{eqnarray*}
[y_1^{\psi_i},\ldots,y_{c+1}^{\psi_j},x^{\psi_j},z^{\psi_j}] & \equiv
&[y_1^{\psi_i},\ldots,y_{c+1}^{\psi_j},z^{\psi_j},x^{\psi_j}]^{-1} \\
& \overset{\text{(Lemma~\ref{lem:comut_chi})}}{ \equiv} &
[y_1^{\psi_i},\ldots,y_{c+1}^{\psi_j},x^{\psi_j},z^{\psi_j}]^{-1},
\end{eqnarray*}
from what we get   
\begin{equation*}
[y_1^{\psi_i},\ldots,y_{c+1}^{\psi_j},x^{\psi_j},z^{\psi_j}]^2 \equiv 1
\quad \text{mod} \; \gamma_{c+4}(\chi(n,H)). 
\end{equation*}
By item (ii), all commutators of degree $c+4$ in $\chi(n,H)$
with entries in the set $\{y_1,\ldots,y_{c+1},x,z\}$ are trivial. Therefore,   
\begin{equation}  \label{eq:identquad}
[y_1^{\psi_i},\ldots,y_{c+1}^{\psi_j},x^{\psi_j},z^{\psi_j}]^2 \equiv 1.
\end{equation}
Repeating the application of congruence~\eqref{eq:identgeral2} it follows that
\begin{equation*}
[y_1^{\psi_i},y_2^{\psi_j}\ldots,y_{c+3}^{\psi_j}]=[y_1^{\psi_i},y_{\rho(
2)}^{\psi_j}\ldots,y_{\rho(c+3)}^{\psi_j}]^{|\rho|},
\end{equation*}
where $\rho$ is a permutation of $\{2,3,\ldots, c+3 \}$ and $|\rho|$ stands for the
sign of $\rho$. So, if $H$ is an $m$-generated group with $m \geq c+3$ then, 
for $c+3\leq k \leq m$, there exist $\binom{m}{k}\binom{n}{2}$ distinct commutators of degree $k$. Therefore, by congruence~\eqref{eq:identquad} and the relations $[D_{ij}, D_{ks}]=1$ for all $0\leq i<j\leq n-1$ and $0\leq k < s \leq n-1$, we have proven item (iii). The proof of Theorem~\ref{th:chinasthngnil_chi} is now complete. 
\end{proof}

\section{The Group $\protect\chi_3(H)$} \label{sec:chi_n_p}

In this section, we show that imposing relations of the form $[D,L] = 1$ as we did in the definition of the group $\chi(n, H)$ is necessary if we expect constructions involving multiple copies of $H$ to inherit the finiteness property of $H$, as in the classical construction. Focusing only on the relations expressing weak commutativity, considering as before $n \geq 3$ isomorphic copies of $H$ through the isomorphisms  
$\psi_i : H \longrightarrow H^{\psi_i}, \ 0 \leq i \leq n-1,$
with $\psi_0 = 1_H$ and $\psi_1 = \psi$,  recall the mother group 
\[
\chi_n(H) := \left\langle H, H^{\psi}, \ldots, H^{\psi_{n-1}} \;\middle|\; [h^{\psi_i}, h^{\psi_j}] = 1, \ \forall h \in H, \ i \neq j \right\rangle.
\]
For $H$ an elementary abelian 3-group of rank 3, we suspect that $\chi_3(H)$ is infinite, whereas for rank 2 we obtain the following results.

\begin{lemma}\label{lem:kernel_general}
Let $p$ be a prime, $H = \langle a, b \rangle \cong C_{p^t} \times 
C_{p^u}$ a $2$-generated abelian $p$-group with $t \geq u \geq 1$, 
so that $\exp(H) = p^t$. Let 
$\pi \colon \chi_3(H) \twoheadrightarrow \chi(3,H)$ be the natural 
epimorphism and $K = \ker\pi$. Then:
\begin{itemize}
    \item[(i)] For $0 \leq i < j \leq 2$, the subgroups
    $D_{ij}$ and $L_{ij}$ of $\chi_3(H)$ are given by
    \[
        D_{ij} = \langle [a^{\psi_i}, b^{\psi_j}] \rangle,
        \quad
        L_{ij} = \langle a^{-\psi_i}a^{\psi_j},\;
                          b^{-\psi_i}b^{\psi_j},\;
                          [a^{\psi_i}, b^{\psi_j}] \rangle.
    \]
    The generator $[a^{\psi_i}, b^{\psi_j}]$ of $D_{ij}$ has
    order dividing $p^{\min\{t,u\}} = p^u$. The generators
    $a^{-\psi_i}a^{\psi_j}$ and $b^{-\psi_i}b^{\psi_j}$ of
    $L_{ij}$ have orders dividing $\mathrm{ord}(a) = p^t$ and
    $\mathrm{ord}(b) = p^u$ respectively. In particular, all
    generators have order dividing $\exp(H) = p^t$.

    \item[(ii)] $K$ is generated as a normal subgroup of
    $\chi_3(H)$ by the four double commutators
    \[
        k_1 = [a,\, \bp,\, \app], \quad
        k_2 = [a,\, \bp,\, \bpp], \quad
        k_3 = [a,\, \bpp,\, \ap], \quad
        k_4 = [\ap,\, \bpp,\, a];
    \]
    that is, $K = \langle k_1, k_2, k_3, k_4
    \rangle^{\chi_3(H)}$.

    \item[(iii)] If $p$ is odd, then $K \leq Z(\chi_3(H))$
    (hence $K$ is abelian), $k_1 = k_3 = k_4$, and
    \[
        k_2 = [a,\, \bpp,\, \bp]
            = [\ap,\, \bpp,\, b],
    \]
    so that $K = \langle k_1,\, k_2 \rangle$ has rank at most
    $2$ and $\exp(K)$ divides $p^u = p^{\min\{t,u\}}$.
    \item[(iv)] If $p = 2$, then $K$ is non-central in
    general (so the $p$ odd reduction (Step~4) does not
    apply); the four generators $k_1, k_2, k_3, k_4$ are
    linearly independent in $K^{\mathrm{ab}}$, so in
    particular $K^{\mathrm{ab}}$ has $\mathbb{Z}$-rank
    at least $4$; and the generators $k_i$ may have
    infinite order in $\chi_3(H)$ (see Theorem~\ref{thm:chi_n_klein} for the case $H=C_2\times C_2$).
\end{itemize}
\end{lemma}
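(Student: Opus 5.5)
We work throughout in $\chi_3(H)$ and use only weak commutativity and the identities it forces on each pair of copies. By Proposition~\ref{prop:embed_mother}, each $\langle H^{\psi_i},H^{\psi_j}\rangle$ is isomorphic to $\chi(H)$, so the structure of $\chi(H)$ for abelian $H$ from \cite[Section~4.1]{sidki-wp} is available inside each pair. In particular $D_{ij}$ and $L_{ij}$ commute, and $D_{ij}$ is normal in the pair.

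\emph{Part (i).} Expand $[a^{r\psi_i}b^{s\psi_i},a^{r'\psi_j}b^{s'\psi_j}]$. Since $H$ is abelian, $[a^{\psi_i},a^{\psi_j}]=[b^{\psi_i},b^{\psi_j}]=1$. Lemma~\ref{lem:relacoes_chi}(i)--(ii), whose proofs use only weak commutativity within a pair, give $[b^{\psi_i},a^{\psi_j}]=[a^{\psi_i},b^{\psi_j}]^{-1}$ up to conjugation. The conjugates collapse because $H^{\psi_i}$ and $H^{\psi_j}$ act identically on $D_{ij}$, and for abelian $H$ the group $D\le\chi(H)$ is cyclic, generated by $[a,b^\psi]$, with its order dividing $\gcd(\mathrm{ord}\,a,\mathrm{ord}\,b)$ \cite{sidki-wp}. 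For $L_{ij}$, the map $h^{\psi_i}\mapsto h^{-\psi_i}h^{\psi_j}$ satisfies $(xy)^{-\psi_i}(xy)^{\psi_j}=y^{-\psi_i}(x^{-\psi_i}x^{\psi_j})y^{\psi_j}$. Rewriting this as a product of the two basic generators times an element of $D_{ij}$ shows that $L_{ij}$ is generated by $a^{-\psi_i}a^{\psi_j}$, $b^{-\psi_i}b^{\psi_j}$ and $[a^{\psi_i},b^{\psi_j}]$. The order bounds hold because $a^{\psi_i}$ and $a^{\psi_j}$ commute, so $(a^{-\psi_i}a^{\psi_j})^{p^t}=1$.

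\emph{Part (ii).} By definition, $K$ is the normal closure of the relators $[d,\ell]$ with $d\in D_{ij}$, $\ell\in L_{kl}$ and $\{i,j\}\ne\{k,l\}$. Using normality of $D_{ij}$ in the pair, together with $D_{ij}$ being cyclic, we reduce to the generators from (i): the commutators $[[a^{\psi_i},b^{\psi_j}],x^{-\psi_k}x^{\psi_l}]$ with $x\in\{a,b\}$, plus commutators between distinct $D$'s. Modulo the normal closure, each of these equals a double commutator of the form $[a^{\psi_i},b^{\psi_j},x^{\psi_m}]$ times the inverse of the same commutator with $x^{\psi_i}$ in the last place. The three pairs $\{i,j\}$ are permuted by $S_3$, which acts by automorphisms on $\chi_3(H)$ because the presentation is symmetric (cf.\ Lemma~\ref{lem:sym_aut}). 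Applying this symmetry together with the identities $[a^{\psi_i},b^{\psi_j}]=[a^{\psi_j},b^{\psi_i}]$, the list reduces to the four listed $k_1,\dots,k_4$. This reduction is bookkeeping, and we would check it case by case.

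\emph{Part (iii), $p$ odd.} We first show that each $k_r$ commutes with every generator. The tool is the Hall--Witt identity applied to triples of distinct copies, combined with the fact that in $\chi(H)$ the subgroup $R(H)$ has exponent $2$ (\cite[Cor.~4.1.15]{sidki-wp}). The relations among the $k_r$ then come with factors of $2$, and these can be cancelled since $p$ is odd. This yields $k_1=k_3=k_4$ and the two stated expressions for $k_2$. Once the $k_r$ are central, the normal closure is just $\langle k_1,k_2\rangle$. Its exponent divides $p^u$ because each $k_r$ is a commutator of an element of the cyclic group $D_{ij}$, of order dividing $p^u$, with an element that acts on $D_{ij}$ by an automorphism. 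We expect this step to be the main obstacle, since centrality requires a genuine three-copy computation. We would first attempt it modulo $\gamma_4$ and then use nilpotency of $\chi_3(H)$ (via the diagonal map into $\mathbf X_7(H)$) to lift.

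\emph{Part (iv), $p=2$.} Here the factor of $2$ cannot be cancelled. We would exhibit an explicit quotient in which all four $k_r$ survive independently in $K^{ab}$. This can be done by a Reidemeister--Schreier computation of $K^{ab}$ for $H=C_2\times C_2$, which is deferred to Theorem~\ref{thm:chi_n_klein}, and that same computation shows that $K$ is non-central.
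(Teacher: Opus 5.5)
\textbf{There are genuine gaps in (ii) and (iii).}

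\textbf{Part (ii).} Your reduction of $K$ to the normal closure of the six elements $[d_{ij},a^{\psi_l}]$, $[d_{ij},b^{\psi_l}]$ (with $d_{ij}=[a^{\psi_i},b^{\psi_j}]$ and $l$ the third index) agrees with the paper's Step~1, under two provisos. You must use that $D_{ij}$ is \emph{central}, not merely normal, in $\langle H^{\psi_i},H^{\psi_j}\rangle$. You must also dispose of the relators $[d_{ij},d_{kl}]$, by expanding $[d_{ij},w]$ for an arbitrary word $w$ in the generators.

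The reduction from six to four normal generators fails as proposed. The $S_3$-action is by automorphisms that are not inner. So $S_3$-invariance of the six-element set gives no reason for $[d_{02},\bp]=[a,\bpp,\bp]$ and $[d_{12},b]=[\ap,\bpp,b]$ to lie in $\langle k_1,\dots,k_4\rangle^{\chi_3(H)}$. The symmetry $[a^{\psi_i},b^{\psi_j}]=[a^{\psi_j},b^{\psi_i}]$ never touches the third entry, so no case-by-case bookkeeping with these tools will give the reduction. What is needed is a three-copy Hall--Witt computation. For the triple $(a,\bp,\bpp)$, the middle factor vanishes by weak commutativity of $\bp$ and $\bpp$. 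Centrality of $D_{01}$ and $D_{02}$ in their pairs then turns the identity into $[a,\bpp,\bp]=k_2^{[a,\bp]^{-1}\bp a^{-1}}$. The triple $(b,\ap,\bpp)$ shows in the same way that $[\ap,\bpp,b]$ is a conjugate of $k_2$.

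\textbf{Part (iii).} This is the more serious gap. Oddness of $p$ enters through the relation $[D_{ij},D_{kl}]=1$, which your plan never produces. Without it, Hall--Witt relates the $k_r$ only up to conjugation, exactly as in (ii). The paper obtains it with $X=d_{02}$, $Y=d_{12}$, $g=a^{-1}\ap$. Expanding $[a\ap,\bpp]=[\ap a,\bpp]$ and $[b^{-\psi_2},a\ap]=[b^{-\psi_2},\ap a]$ gives $(XY)^g=YX$ and $(YX)^g=XY$. So $g^2$ fixes $XY$, and since $g$ has odd order, $XY=YX$. Only then does Hall--Witt on $(u^{\psi_i},v^{-\psi_j},u^{\psi_l})$ give $k_1=k_3=k_4$ and the two expressions for $k_2$, from which centrality follows.

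Several parts of your plan for (iii) do not work:
\begin{itemize}
\item The tool you cite, that $R(H)$ has exponent $2$, is irrelevant. For $2$-generated abelian $H$ the subgroup $D$ is already central in $\chi(H)$, so $R(H)=1$. In any case it is a two-copy fact and says nothing about the three-copy elements $k_r$.
\item Your fallback of working modulo $\gamma_4$ and lifting is invalid. For abelian $H$ the target $\mathbf{X}_7(H)$ is abelian, so $\delta_3$ kills $\chi_3(H)'\supseteq K$ and gives no nilpotency at all. Nilpotency of $\chi_3(H)$ is a consequence of (iii), not an input. For $p=2$ it is false, since $\chi_3(C_2\times C_2)$ is infinite and generated by involutions.
\item Your exponent argument is wrong as stated, because the third copy need not normalize $D_{ij}$ in $\chi_3(H)$. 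Indeed $d_{01}^{\app}=d_{01}k_1$, and for the Klein group $k_1$ has infinite order while $D_{01}$ is finite. The correct argument uses centrality: $k_1^{p^u}=[d_{01}^{p^u},\app]=1$.
\end{itemize}

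\textbf{Part (iv).} This is stated for every $C_{2^t}\times C_{2^u}$, so you need the reduction $H\twoheadrightarrow H/H^2\cong C_2\times C_2$. The paper then uses an explicit $5\times5$ integral representation rather than Reidemeister--Schreier. It proves non-centrality directly by Hall--Witt, obtaining $k_1^b=k_3^{-1}$ in the Klein case.
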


\begin{proof}
\textbf{(i).} By \cite[Proposition~4.1.6(i)]{sidki-wp}, since $H$ is a $2$-generated abelian group, each $D_{ij}$ is central in $\langle H^{\psi_i}, H^{\psi_j}\rangle$; hence commutators between these two copies are bilinear. For each pair $i < j$, the subgroup 
$\langle H^{\psi_i}, H^{\psi_j}\rangle \leq \chi_3(H)$ 
is a homomorphic image of $\chi(H)$ via the map 
$h \mapsto h^{\psi_i}$, $h^{\psi} \mapsto h^{\psi_j}$. 
Since $H$ is abelian, the structure of $D_{ij}$ and 
$L_{ij}$ inside $\langle H^{\psi_i}, H^{\psi_j}\rangle$ 
follows from \cite[Section~4]{sidki-wp}. More precisely, 
the quotients
\[
    \frac{\langle H^{\psi_i}, H^{\psi_j}\rangle}{D_{ij}} 
    \cong H \times H,
    \qquad
    \frac{\langle H^{\psi_i}, H^{\psi_j}\rangle}{L_{ij}} 
    \cong H,
\]
together with the fact that $H = \langle a, b\rangle$ is 
$2$-generated and abelian, force
\[
    D_{ij} = \bigl\langle 
        [x^{\psi_i},\, y^{\psi_j}] 
        \mid x, y \in \{a,b\},\; x \neq y 
    \bigr\rangle
    = \langle [a^{\psi_i},\, b^{\psi_j}] \rangle,
\]
since $[a^{\psi_i}, a^{\psi_j}] = 1$ by the weak 
commutativity relation, and $[b^{\psi_i}, a^{\psi_j}] 
= [a^{\psi_i}, b^{\psi_j}]^{-1}$ by the symmetry $[x^{\psi_i}, y^{\psi_j}] = [x^{\psi_j}, y^{\psi_i}]$. 
For $L_{ij}$, recall from \cite[Section~4]{sidki-wp} 
that $L_{ij} = [H^{\psi_i}, \psi_i^{-1}\psi_j]$ is 
generated by elements $h^{-\psi_i}h^{\psi_j}$ for 
$h \in H$, together with the generators of 
$D_{ij} \leq L_{ij}$; since $H = \langle a, b\rangle$, 
it suffices to take $h \in \{a, b\}$, giving the 
stated description of $L_{ij}$.

We now establish the order claims. For the generator 
$d := [a^{\psi_i}, b^{\psi_j}]$ of $D_{ij}$: by
\cite[Proposition~4.1.6(i)]{sidki-wp}, since $H$ is a $2$-generated abelian group, $d$ is central
in $\langle H^{\psi_i}, H^{\psi_j}\rangle$, so the 
identity $[x^k, y] = [x,y]^k$, valid for central 
commutators, gives
\[
    d^{p^t} = [(a^{p^t})^{\psi_i},\, b^{\psi_j}] = 1
    \qquad\text{and}\qquad
    d^{p^u} = [a^{\psi_i},\, (b^{p^u})^{\psi_j}] = 1,
\]
since $a^{p^t} = 1$ and $b^{p^u} = 1$ in $H$. Hence 
$\mathrm{ord}(d)$ divides $\gcd(p^t, p^u) = p^u$.

For the generators of $L_{ij}$: the element 
$a^{-\psi_i}a^{\psi_j}$ has order dividing 
$\mathrm{ord}(a) = p^t$, since
\[
    (a^{-\psi_i}a^{\psi_j})^{p^t} 
    = (a^{p^t})^{-\psi_i}(a^{p^t})^{\psi_j} = 1,
\]
and $b^{-\psi_i}b^{\psi_j}$ has order dividing 
$\mathrm{ord}(b) = p^u$, since
\[
    (b^{-\psi_i}b^{\psi_j})^{p^u} 
    = (b^{p^u})^{-\psi_i}(b^{p^u})^{\psi_j} = 1,
\]
where both expansions are valid since $a^{\psi_i}$ and $a^{\psi_j}$ (resp.\ $b^{\psi_i}$ and $b^{\psi_j}$) commute, by weak commutativity. In particular, 
all generators of $L_{ij}$ have order dividing 
$\exp(H) = p^t$.

\medskip
\textbf{(ii).} We proceed in four steps.

\medskip
\textit{Step 1: $K$ is generated, as a normal subgroup, by six double
commutators.}
For each pair $0 \leq i < j \leq 2$ let $l$ be the remaining index of
$\{0,1,2\}$ and put $d_{ij} := [a^{\psi_i}, b^{\psi_j}]$, so that
$D_{ij} = \langle d_{ij} \rangle$ by~(i). We show that
\begin{equation}\label{eq:K_six}
    K = N := \bigl\langle\, [d_{ij}, a^{\psi_l}],\; [d_{ij}, b^{\psi_l}]
    \;:\; 0 \leq i < j \leq 2 \,\bigr\rangle^{\chi_3(H)} .
\end{equation}
Explicitly, the six generators of $N$ are
\begin{align*}
    &\{0,1\},\ l = 2: & [d_{01}, \app] &= [a, \bp, \app] = k_1, &
    [d_{01}, \bpp] &= [a, \bp, \bpp] = k_2,\\
    &\{0,2\},\ l = 1: & [d_{02}, \ap] &= [a, \bpp, \ap] = k_3, &
    [d_{02}, \bp] &= [a, \bpp, \bp],\\
    &\{1,2\},\ l = 0: & [d_{12}, a] &= [\ap, \bpp, a] = k_4, &
    [d_{12}, b] &= [\ap, \bpp, b].
\end{align*}

\emph{$N \subseteq K$.} Let $h \in \{a, b\}$. Since $\{i,l\} \neq \{i,j\}$,
the relation $[D_{ij}, L_{il}] = 1$ holds in $\chi(3,H)$, and
$h^{-\psi_i}h^{\psi_l} \in L_{il}$. As $d_{ij}$ is central in
$\langle H^{\psi_i}, H^{\psi_j}\rangle$ (proof of~(i)), we get, modulo $K$,
\[
    d_{ij}^{\,h^{\psi_l}} = \bigl(d_{ij}^{\,h^{\psi_i}}\bigr)^{h^{-\psi_i}h^{\psi_l}}
    = d_{ij}^{\,h^{-\psi_i}h^{\psi_l}} \equiv d_{ij},
\]
that is, $[d_{ij}, h^{\psi_l}] \in K$.

\emph{$K \subseteq N$.} By definition, $K$ is the normal closure in
$\chi_3(H)$ of the commutators $[x, y]$ with $x \in D_{ij}$, $y \in L_{kl}$
and $\{i,j\} \neq \{k,l\}$. Since $D_{ij} = \langle d_{ij}\rangle$ and, by
$[xz, y] = [x,y]^z[z,y]$, each $[d_{ij}^{\,m}, y]$ is a product of conjugates
of $[d_{ij}, y]^{\pm 1}$, it suffices to show that $[d_{ij}, w] \in N$ for
every $w \in \chi_3(H)$. Write $w$ as a word in the generators
$a^{\psi_r}, b^{\psi_r}$ ($r = 0,1,2$). The identities
\[
    [x, yz] = [x, z]\,[x, y]^{z}, \qquad
    [x, y^{-1}] = \bigl([x, y]^{-1}\bigr)^{y^{-1}}
\]
express $[d_{ij}, w]$ as a product of conjugates of elements
$[d_{ij}, g]^{\pm 1}$, with $g$ a generator. If $g \in H^{\psi_i} \cup
H^{\psi_j}$, then $[d_{ij}, g] = 1$ by the centrality of $d_{ij}$ in
$\langle H^{\psi_i}, H^{\psi_j}\rangle$; otherwise $g \in \{a^{\psi_l},
b^{\psi_l}\}$ and $[d_{ij}, g]$ is a generator of $N$. Hence
$[d_{ij}, w] \in N$, which proves~\eqref{eq:K_six}.

In particular, the commutators $[D_{ij}, D_{kl}]$ need no separate
treatment. For instance, with $d = d_{01}$,
\[
    [d, [\ap, \bpp]] = [d,\, a^{-\psi} b^{-\psi_2} a^{\psi}\, \bpp]
    = [a, \bp, \bpp]\,[d,\, a^{-\psi} b^{-\psi_2} a^{\psi}]^{\bpp}
    = k_2 \,\bigl(k_2^{-1}\bigr)^{b^{-\psi_2} a^{\psi} b^{\psi_2}} .
\]

\medskip
\textit{Step 2: Reduction to four generators (general $p$).}
We show that the two extra generators
$[a, \bpp, \bp]$ and $[\ap, \bpp, b]$ of Step~1
are conjugates of $k_2$ in $\chi_3(H)$. The argument
uses only weak commutativity, the Hall--Witt identity
\[
    [x, y^{-1}, z]^{y}\cdot[y, z^{-1}, x]^{z}
    \cdot[z, x^{-1}, y]^{x} = 1
\]
together with the fact that, by
\cite[Proposition~4.1.6(i)]{sidki-wp}, since $H$ is a $2$-generated abelian group, each $D_{ij}$
is central in $\langle H^{\psi_i}, H^{\psi_j}\rangle$
(no centrality of $K$ in $\chi_3(H)$ is assumed at
this stage).

\smallskip
\emph{Claim:
$[a, \bpp, \bp] = k_2^{\,[a, \bp]^{-1}\bp\, a^{-1}}$.}
Apply Hall--Witt with $(x, y, z) = (a, \bp, \bpp)$:
\begin{itemize}
    \item The middle term $[\bp, \bpp^{-1}, a]^{\bpp}$
    vanishes since $[\bp, \bpp^{-1}] = 1$ by weak
    commutativity between $\bp$ and $\bpp$.
    \item For the third term, since
    $[a, \bpp] \in D_{02}$ is central in
    $\langle H, H^{\psi_2}\rangle$ and
    $a \in H \subseteq \langle H, H^{\psi_2}\rangle$,
    \[
        [\bpp, a^{-1}]
        = [a, \bpp]^{a^{-1}} = [a, \bpp];
    \]
    hence
    $[\bpp, a^{-1}, \bp]^{a}
        = [\,[a, \bpp], \bp\,]^{a}
        = [a, \bpp, \bp]^{a}$.
    \item For the first term, since
    $[a, \bp] \in D_{01}$ is central in
    $\langle H, H^{\psi}\rangle$,
    \[
        [a, \bp^{-1}]
        = [a, \bp]^{-\bp^{-1}} = [a, \bp]^{-1};
    \]
    hence
    \[ [a, \bp^{-1}, \bpp]^{\bp}
        = [\,[a, \bp]^{-1}, \bpp\,]^{\bp}
        = \bigl(k_2^{-[a, \bp]^{-1}}\bigr)^{\bp}, \]
    using $[X^{-1}, Y] = [X, Y]^{-X^{-1}}$.
\end{itemize}
Hall--Witt therefore reads
\[
    \bigl(k_2^{-1}\bigr)^{[a, \bp]^{-1}\bp}\,[a, \bpp, \bp]^{a} = 1,
\]
and conjugating by $a^{-1}$ gives
\begin{equation}\label{eq:k5_conj}
    [a, \bpp, \bp]
    = k_2^{\,[a, \bp]^{-1}\bp\, a^{-1}}.
\end{equation}

\smallskip
\emph{Claim:
$[\ap, \bpp, b] = k_2^{\,\ap\, \bpp^{-1}\, [\ap, \bpp]}$.}
Apply Hall--Witt with $(x, y, z) = (b, \ap, \bpp)$:
\begin{itemize}
    \item The third term $[\bpp, b^{-1}, \ap]^{b}$
    vanishes since $[\bpp, b^{-1}] = 1$ by weak
    commutativity between $b$ and $\bpp$.
    \item For the first term, the basic identity
    $[a, \bp] = [\ap, b]$ (which holds in $\chi_3(H)$
    via the embedding $\chi(H)\hookrightarrow\chi_3(H)$)
    together with the centrality of $D_{01}$ in $\langle H, H^{\psi}\rangle$ give
    $[b, \ap^{-1}] = [\ap, b]^{\ap^{-1}} = [\ap, b]
    = [a, \bp]$; hence
    $[b, \ap^{-1}, \bpp]^{\ap}
        = [\,[a, \bp], \bpp\,]^{\ap}
        = k_2^{\ap}$.
    \item For the middle term, since
    $[\ap, \bpp] \in D_{12}$ is central in
    $\langle H^{\psi}, H^{\psi_2}\rangle$,
    \[
        [\ap, \bpp^{-1}]
        = [\ap, \bpp]^{-\bpp^{-1}} = [\ap, \bpp]^{-1};
    \]
    hence $[\ap, \bpp^{-1}, b]^{\bpp}
    = \bigl([\ap, \bpp, b]^{-[\ap, \bpp]^{-1}}\bigr)^{\bpp}$.
\end{itemize}
Hall--Witt then yields
\begin{equation}\label{eq:k6_conj}
    [\ap, \bpp, b]
    = k_2^{\,\ap\, \bpp^{-1}\, [\ap, \bpp]}.
\end{equation}

\smallskip
By~\eqref{eq:k5_conj} and~\eqref{eq:k6_conj}, the
elements $[a, \bpp, \bp]$ and $[\ap, \bpp, b]$ lie
in the normal closure of $\{k_2\}$ in $\chi_3(H)$.
Combined with Step~1, this proves that
\[
    K \;=\; \bigl\langle k_1, k_2, k_3, k_4
    \bigr\rangle^{\chi_3(H)}.
\]

\medskip
In Steps~3--5, which prove~(iii), $p$ is odd. Besides weak commutativity,
we use that each $D_{ij} = \langle d_{ij}\rangle$ is central in
$\langle H^{\psi_i}, H^{\psi_j}\rangle$ (so commutators between these two
copies are bilinear), the symmetry
$[x^{\psi_i}, y^{\psi_j}] = [x^{\psi_j}, y^{\psi_i}]$
(Lemma~\ref{lem:relacoes_chi}(i), whose proof uses only weak
commutativity), and the action of $S_3$ on $\chi_3(H)$ permuting the
copies (Lemma~\ref{lem:sym_aut}).

\medskip
\textit{Step 3: $[D_{ij}, D_{kl}] = 1$.}
Let $X := d_{02}$, $Y := d_{12}$ and $g := a^{-1}\ap$. As $X^a = X$ and
$Y^{\ap} = Y$, expanding $[a\ap, \bpp] = [\ap a, \bpp]$ and
$[\bpp^{-1}, a\ap] = [\bpp^{-1}, \ap a]$ gives
$(XY)^{g} = YX$ and $(YX)^{g} = XY$, so $(XY)^{g^2} = XY$. Since $g$ has
order dividing $p^t$, which is odd, $g \in \langle g^2\rangle$; hence
$XY = (XY)^g = YX$ and $[D_{02}, D_{12}] = 1$. The transpositions
$(0\;2)$ and $(1\;2)$ of the copies give the other two cases.

\medskip
\textit{Step 4: $k_1 = k_3 = k_4$ and $k_2 = [a, \bpp, \bp] = [\ap, \bpp, b]$.}
For $u, v \in H$ and $\{i, j, l\} = \{0, 1, 2\}$, put
$e := [u^{\psi_i}, v^{\psi_j}]$ and $f := [u^{\psi_l}, v^{\psi_j}]$. Then
\begin{equation}\label{eq:swap_u}
    [e, u^{\psi_l}] = [f, u^{\psi_i}] .
\end{equation}
Indeed, in Hall--Witt for $(u^{\psi_i}, v^{-\psi_j}, u^{\psi_l})$ the
third factor is trivial, the first equals
$[e, u^{\psi_l} f^{-1}] = [e, u^{\psi_l}]^{f^{-1}}$ (Step~3), and the
second equals $[f^{-1}, u^{\psi_i}] = ([f, u^{\psi_i}]^{-1})^{f^{-1}}$.
For $u = a$, $v = b$, \eqref{eq:swap_u} gives $k_1 = [d_{12}, a] = k_4$
if $(i,j,l) = (0,1,2)$ (as $[\app, \bp] = d_{12}$), and $k_3 = k_4$ if
$(i,j,l) = (0,2,1)$. For $k_2$: since $a^{\bpp} = a\,d_{02}$ and
$\bp^{\bpp} = \bp$, Step~3 gives $d_{01}^{\,\bpp} = d_{01}[d_{02}, \bp]$,
that is, $k_2 = [d_{02}, \bp]$; writing $d_{01} = [\ap, b]$, the same
computation gives $k_2 = [d_{12}, b]$.

\medskip
\textit{Step 5: $K \leq Z(\chi_3(H))$.}
If $c \in D_{ij}$, $w \in H^{\psi_l}$ and
$h \in H^{\psi_i} \cup H^{\psi_j}$, then $[c, w, h] = 1$: in Hall--Witt
for $(c, w^{-1}, h)$, the second factor vanishes by Step~3 and the third
by the centrality of $c$. Since $k_1 = [d_{01}, \app] = [d_{12}, a]$ and
$k_2 = [d_{01}, \bpp] = [d_{12}, b]$ (Step~4), $k_1$ and $k_2$ commute
with all three copies. By Steps~1 and~4 the six generators of $N$ reduce
to $k_1, k_2$, so $K = \langle k_1, k_2\rangle$ is central. (Step~2 is
thus not needed for $p$ odd.)

\medskip
\textbf{(iii).} This is Steps~4 and~5, together with the exponent: as
$k_1$ is central, $k_1^{p^u} = [d_{01}^{\,p^u}, \app] = 1$ by~(i), and
likewise $k_2^{p^u} = 1$.

\medskip
\textbf{(iv).} For $H = C_{2^t} \times C_{2^u}$ with
$t \geq u \geq 1$, the projection $H \twoheadrightarrow
H/H^2 \cong C_2 \times C_2$ induces an epimorphism
$\chi_3(H) \twoheadrightarrow \chi_3(C_2 \times C_2)$
that sends each $k_i$ to the corresponding generator of
$K$ in the Klein case. Composing with the matrix
representation of Theorem~\ref{thm:chi_n_klein}
therefore yields a homomorphism $\chi_3(H) \to
\mathrm{GL}_5(\mathbb{Z})$ that we use throughout.

The non-centrality of $K$ for $p = 2$ is
witnessed by the action $k_1^{b} = k_3^{-1} \neq k_1$
in the case $H = C_2\times C_2$: Hall--Witt applied to the involutions $(\ap, b, \app)$ gives $[d_{01},\app]^{b}\,[d_{02},\ap]^{\app} = 1$, the third factor being trivial as $[\app,\ap]=1$; since $k_3$ is fixed by $\app$, this is $k_1^{b} = k_3^{-1}$. Moreover $k_3^{-1} \neq k_1$, since under the representation $\rho$ of Theorem~\ref{thm:chi_n_klein} they correspond to the translation vectors $-v^{(3)} = (0,2,-4,0)$ and $v^{(1)} = (4,2,0,0)$.
The four generators $k_1, k_2, k_3, k_4$ are linearly
independent in $K^{\mathrm{ab}}$: the matrix
representation in the proof of
Theorem~\ref{thm:chi_n_klein} maps them to four
linearly independent vectors of
$\mathbb{Z}^4 \leq K^{\mathrm{ab}}$, so in particular
$K^{\mathrm{ab}}$ has $\mathbb{Z}$-rank at least $4$.
Consequently, the equalities $k_1 = k_3 = k_4$ and
$k_2 = [a, \bpp, \bp] = [\ap, \bpp, b]$ used in the
$p$ odd reduction (Step~4) fail for $p = 2$, so that
reduction does not apply. The generators $k_i$ may
have infinite order, again by the matrix
representation.
\end{proof}

We are now ready to prove Theorem~\hyperref[th:thmD]{D}(i), restated as follows.

\begin{theorem}\label{thm:chi3cpmpn}
Let $p$ be an odd prime and $H \cong C_{p^t} \times C_{p^u}$
with $t \geq u \geq 1$. Then $\chi_3(H)$ is finite of
order dividing $p^{3t+8u}$ and has nilpotency class
at most $3$.
\end{theorem}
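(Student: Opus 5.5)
The plan is to realise $\chi_3(H)$ as a central extension of $\chi(3,H)$ by the kernel $K$ of Lemma~\ref{lem:kernel_general}, and to bound the two pieces separately:
\[
|\chi_3(H)| = |K|\,|\chi(3,H)|.
\]

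\emph{Bounding $|\chi(3,H)|$.} Since $p$ is odd, Corollary~\ref{cor:abelian_odd} (equivalently Proposition~\ref{prop:prodD}) gives
\[
\chi(3,H)\cong\bigl((D_{01}\oplus D_{02}\oplus D_{12})\cdot H\bigr)\cdot H^{\psi}\cdot H^{\psi_2},
\]
so $|\chi(3,H)| = |C|\,|H|^3$ with $|C|\le |D|^3$. We have $|H|^3=p^{3t+3u}$. Each $D_{ij}$ in $\chi(3,H)$ is the image of $D_{ij}\le\chi_3(H)$, which by Lemma~\ref{lem:kernel_general}(i) is cyclic of order dividing $p^u$. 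Hence $|C|$ divides $p^{3u}$, and $|\chi(3,H)|$ divides $p^{3t+6u}$.

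\emph{Bounding $|K|$.} By Lemma~\ref{lem:kernel_general}(iii), $K=\langle k_1,k_2\rangle$ is central and abelian, of rank at most $2$ and exponent dividing $p^u$. Therefore $|K|$ divides $p^{2u}$, and $|\chi_3(H)|$ divides $p^{3t+6u}\cdot p^{2u}=p^{3t+8u}$. In particular $\chi_3(H)$ is finite.

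\emph{Nilpotency class.} I would first show $\gamma_2(\chi_3(H))=D_{01}D_{02}D_{12}$. The generators $[h^{\psi_i},g^{\psi_j}]$ of $\gamma_2$ lie in the $D_{ij}$, and commutators within a single copy vanish because $H$ is abelian. Modulo $K$, every $D_{ij}$ is normal (proof of Proposition~\ref{prop:epichi}), and $K\le D_{01}D_{02}D_{12}$ since the $k_i$ are commutators of elements of $D_{ij}$. This gives normality of the product in $\chi_3(H)$.

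Next, $\gamma_3$ is generated modulo $\gamma_4$ by the elements $[d_{ij},h^{\psi_l}]$. Each $d_{ij}$ is central in $\langle H^{\psi_i},H^{\psi_j}\rangle$, so by Step~1 of the proof of Lemma~\ref{lem:kernel_general} these elements are exactly the six generators of $N=K$. Hence $\gamma_3(\chi_3(H))\le K$ modulo $\gamma_4$. Using normal closures, and since $K$ is central and normal, I expect the sharper statement $\gamma_3(\chi_3(H))\le K$. Centrality of $K$ then gives $\gamma_4(\chi_3(H))=[\gamma_3,\chi_3(H)]\le[K,\chi_3(H)]=1$, so the class is at most $3$.

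\emph{Main obstacle.} The delicate point is the inclusion $\gamma_3(\chi_3(H))\le K$ itself, not merely modulo $\gamma_4$. I would argue it directly. The quotient $\chi_3(H)/K=\chi(3,H)$ has $C$ central: in the proof of Proposition~\ref{prop:prodD}, Steps~1--2 show each $D_{ij}$ is central when $R(H)=1$, which holds for $p$ odd. So $\gamma_3(\chi(3,H))=[C,\chi(3,H)]=1$. Pulling back, $\gamma_3(\chi_3(H))\le K$, and the class bound follows.
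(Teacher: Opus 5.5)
Your order count and your final nilpotency argument are correct and are essentially the paper's proof. You use $|\chi_3(H)|=|K|\,|\chi(3,H)|$ with $|K|\mid p^{2u}$ by Lemma~\ref{lem:kernel_general}(iii) and $|\chi(3,H)|\mid p^{3t+6u}$, and you get class $\le 3$ because $K$ is central and $\chi(3,H)$ has class $\le 2$. The differences are minor:
\begin{itemize}
\item you bound $|D_{ij}|$ by Lemma~\ref{lem:kernel_general}(i), instead of quoting $|D|=p^u$ from \cite[Theorem~4.2.4]{sidki-wp};
\item you obtain class $\le 2$ for $\chi(3,H)$ from the centrality of $C$ (Steps~1--2 of Proposition~\ref{prop:prodD}, plus $\chi(3,H)/C\cong H^3$ abelian), rather than from that citation.
\end{itemize}

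You should, however, drop the intermediate paragraph, because its claim $\gamma_2(\chi_3(H))=D_{01}D_{02}D_{12}$ is false in general. The reason you give is wrong: $k_1=[d_{01},a^{\psi_2}]$ is a commutator of an element of $D_{01}$ with the third copy $H^{\psi_2}$, not a commutator of elements of the $D_{ij}$'s.

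In fact the claim would force $K=1$. By Step~3 of Lemma~\ref{lem:kernel_general}, $D_{01}D_{02}D_{12}$ is abelian and generated by three elements of order dividing $p^u$. Its image in $\chi(3,H)$ is $C$, which has order $p^{3u}$, so $D_{01}D_{02}D_{12}$ meets $K$ trivially, and $K\le D_{01}D_{02}D_{12}$ would give $K=1$.

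This fails already for $H=C_p\times C_p$ with $p\ge 5$. Take the Lie algebra over $\mathbb{F}_p$ with basis $A_i,B_i,d_{01},d_{02},d_{12},\kappa_a,\kappa_b$. Its only nonzero brackets are $[A_i,B_j]=[A_j,B_i]=d_{ij}$ for $i\neq j$, and $[d_{ij},A_l]=\kappa_a$, $[d_{ij},B_l]=\kappa_b$ for $\{i,j,l\}=\{0,1,2\}$. It satisfies Jacobi and has class $3<p$. Its Lazard group satisfies the defining relations of $\chi_3(H)$ under $a^{\psi_i}\mapsto A_i$, $b^{\psi_i}\mapsto B_i$. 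Hence $|\chi_3(H)|=p^{11}$, $|K|=p^2$, and $k_1\mapsto\kappa_a\neq 0$.

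Your last paragraph never uses this claim, so the proof is complete once the detour is removed.
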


\begin{proof}
By Lemma~\ref{lem:kernel_general}(ii)--(iii), $K$ is 
central in $\chi_3(H)$ and
\[
    K = \langle k_1,\, k_2 \rangle,
\]
with $\exp(K)$ dividing $p^u$. Hence $|K|$ divides $p^{2u}$.

Since $K \leq Z(\chi_3(H))$ and $\chi(3,H)$ has nilpotency 
class at most $2$ by Corollary~\ref{cor:abelian_odd} and 
\cite[Theorem~4.2.4]{sidki-wp}, it follows that $\chi_3(H)$ 
is nilpotent of class at most $3$.

For the order bound: by 
\cite[Theorem~4.2.4~(i)--(ii)]{sidki-wp}, for $H$ of type 
$(p^t, p^u)$ we have $|D| = p^u$, hence 
$|\chi(H)| = |D||H|^2 = p^{2t+3u}$. By 
Theorem~\ref{th:upperbound} and 
Corollary~\ref{cor:abelian_odd},
\[
    |\chi(3,H)| = |D|^3 \cdot |H|^3 = p^{3u} \cdot p^{3(t+u)} 
    = p^{3t+6u}.
\]
Since $|\chi_3(H)| = |K| \cdot |\chi(3,H)|$, we conclude 
that
\[
    |\chi_3(H)| \;\text{ divides }\; 
    p^{2u} \cdot p^{3t+6u} = p^{3t+8u}.
\]
In particular, when $t = u = 1$ this gives the bound $p^{11}$.
\end{proof}

We now establish Theorem~\hyperref[th:thmD]{D}(ii), restated below.

\begin{theorem}\label{thm:chi_n_klein}
Let $H$ be the Klein four-group. Then $\chi_3(H)$ is an
infinite group. Moreover, the kernel of
$\pi : \chi_3(H) \twoheadrightarrow \chi(3,H)$ has
$\mathbb{Z}^4$ as a homomorphic image.
\end{theorem}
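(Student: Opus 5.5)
The approach is to build an explicit representation $\rho\colon \chi_3(H)\to \mathrm{GL}_5(\mathbb{Z})$, with image in the affine group $\mathbb{Z}^4\rtimes \mathrm{GL}_4(\mathbb{Z})$ realised as $5\times5$ matrices $\begin{pmatrix} A & v\\ 0 & 1\end{pmatrix}$. Each $\rho(k_i)$ should be a translation by a vector $v^{(i)}$, and the $v^{(i)}$ should be linearly independent. Since $\chi_3(H)$ is generated by the six involutions $a^{\psi_r},b^{\psi_r}$ ($r=0,1,2$), it suffices to specify six affine involutions $\rho(a^{\psi_r}),\rho(b^{\psi_r})$ of $\mathbb{Z}^4$ satisfying the defining relations. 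Because $H$ is elementary abelian of exponent $2$, the weak commutativity relations $[h^{\psi_i},h^{\psi_j}]=1$ for $h\in\{a,b,ab\}$ reduce to three kinds of condition: $\rho(a^{\psi_i})$ and $\rho(a^{\psi_j})$ commute, $\rho(b^{\psi_i})$ and $\rho(b^{\psi_j})$ commute, and $\rho(a^{\psi_i}b^{\psi_i})$ and $\rho(a^{\psi_j}b^{\psi_j})$ commute, for all $i\neq j$. Within each copy the images of $a^{\psi_r}$ and $b^{\psi_r}$ must also commute.

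The construction starts from the quotient $\chi(3,H)$, which is finite by Theorem~\hyperref[th:thmA]{A}. I would take the linear parts $A$ to factor through a finite quotient, say $H^3$ acting by signed permutation or diagonal $\pm1$ matrices on $\mathbb{Z}^4$. The translation parts then become a $1$-cocycle with values in $\mathbb{Z}^4$, and the relations become linear equations over $\mathbb{Z}$ on the six unknown translation vectors. The vectors quoted in Lemma~\ref{lem:kernel_general}(iv), $v^{(1)}=(4,2,0,0)$ and $v^{(3)}=(0,-2,4,0)$, suggest a suitable scaling. I would solve this system, with computer help if necessary, and then check every relation by direct matrix multiplication, which is a finite computation.

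Once $\rho$ is in hand, I would compute $\rho(k_1),\dots,\rho(k_4)$ for the four double commutators of Lemma~\ref{lem:kernel_general}(ii) and verify that they are translations by four linearly independent vectors in $\mathbb{Z}^4$. Since $K=\langle k_1,\dots,k_4\rangle^{\chi_3(H)}$, the image $\rho(K)$ is generated by the $\rho(k_i)$ and their conjugates. Conjugation of a translation $t_v$ gives $t_{Av}$, so all these conjugates are again translations, and hence $\rho(K)$ lies in the translation subgroup $\mathbb{Z}^4$. It therefore has finite index in $\mathbb{Z}^4$, because it contains four independent vectors, and so $\rho(K)\cong\mathbb{Z}^4$. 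This gives $\mathbb{Z}^4$ as a homomorphic image of $K$, and in particular $K$ and hence $\chi_3(H)$ are infinite.

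The main obstacle is finding the representation: the linear parts must give a genuine action, and the relation system must admit a solution with independent $v^{(i)}$. If the diagonal choice of linear parts forces degenerate translations, I would instead let the linear parts factor through $\chi(3,H)$ acting on its relation module modulo a suitable sublattice.
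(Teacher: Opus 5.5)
Your overall strategy is the paper's: an affine representation $\rho\colon\chi_3(H)\to\mathrm{GL}_5(\mathbb{Z})$ under which the $k_s$ become translations by independent vectors. Your closing deduction is also correct: the normal closure of translations stays in the translation subgroup, and four independent vectors give a finite-index subgroup of $\mathbb{Z}^4$.

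The gap is that the whole content of the theorem is the existence of $\rho$, and you do not produce it. Moreover, the concrete ansatz you propose cannot work, and this is a certainty rather than a contingency. Write $G=\chi_3(H)$, so that $G^{\mathrm{ab}}=H^3$. If the linear parts factor through $H^3$, then $\rho(G')$ lies in the torsion-free abelian translation group, so $\rho|_{G'}$ factors through $G'/G''$.

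But $G'/G''$ is finite. Fix a character $\chi$ of $H^3$ and view a $1$-cocycle $G\to\mathbb{C}_\chi$ as an affine action on $\mathbb{C}$ with linear part $\chi$.
\begin{itemize}
\item Generators with $\chi=1$ act trivially, since they are involutions.
\item In each copy $i$ with $\chi_i\neq 1$, every $h^{\psi_i}$ with $\chi_i(h)=-1$ acts as the same reflection $z\mapsto -z+p_i$. This uses $[a^{\psi_i},b^{\psi_i}]=1$ when $\chi_i(a)=\chi_i(b)=-1$.
\item Any two nontrivial characters of $C_2\times C_2$ take the value $-1$ at a common $h\in\{a,b,ab\}$. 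So the relation $[h^{\psi_i},h^{\psi_j}]=1$ forces $p_i=p_j$.
\end{itemize}
Hence every cocycle is a coboundary, so $H^1(G;\mathbb{C}_\chi)=0$ and, dually, $H_1(G;\mathbb{C}_\chi)=0$ for every $\chi$. By Shapiro's lemma $H_1(G';\mathbb{C})\cong H_1(G;\mathbb{C}[H^3])=0$. Since $G'$ is finitely generated, $G'/G''$ is finite. Consequently $\rho(k_s)=1$ for every choice of linear parts through $H^3$. The linear system you intend to solve has only solutions that are trivial on $G'$, whatever scaling you take.

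The linear parts must therefore be non-abelian; in the paper the $4\times 4$ blocks of $A$ and $B^{\psi}$ do not commute. They do factor through $\chi(3,H)$, because $\rho(K)$ consists of translations, so your fallback points in the right direction. As stated, however, it proves nothing. Letting $\chi(3,H)$ act on its relation module modulo a sublattice yields $\rho(K)$ of rank $4$ only if you already know that $K^{\mathrm{ab}}$ has torsion-free rank at least $4$, which is exactly the claim.

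To close the gap you need one of the following:
\begin{itemize}
\item explicit matrices, with the defining relations verified and the vectors $v^{(s)}$ computed and shown to be independent, as the paper does (its $4\times4$ matrix of translation vectors has determinant $64\neq 0$); or
\item an actual computation of $K^{\mathrm{ab}}$, for example by Reidemeister--Schreier applied to the finite-index subgroup $K$ of the finitely presented group $\chi_3(H)$.
\end{itemize}
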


\begin{proof}
As above, we assume that $H$ is generated by $a$ and $b$. According to Lemma~\ref{lem:kernel_general}(ii), $K = \langle k_1, k_2, k_3, k_4\rangle^{\chi_3(H)}$.
We exhibit a homomorphism $\rho \colon \chi_3(H) \to \mathrm{GL}_5(\mathbb{Z})$
with $\rho(K) \cong \mathbb{Z}^4$. For this purpose, let's consider the assignments 
\[
a \mapsto A:=\begin{pmatrix}
 1 &  1 &  0 &  0 & 0 \\
 0 & -1 &  0 &  0 & 0 \\
 0 & -1 &  1 &  0 & 0 \\
 0 &  0 &  0 & -1 & 0 \\
 0 &  1 &  0 &  0 & 1
\end{pmatrix},
\quad
b \mapsto B := \begin{pmatrix}
  0 &  0 & -1 &  1 & 0 \\
  0 &  1 &  0 & -2 & 0 \\
 -1 &  0 &  0 & -1 & 0 \\
  0 &  0 &  0 & -1 & 0 \\
  0 &  0 &  0 &  1 & 1
\end{pmatrix},
\]
\[
\ap \mapsto A^\psi := \begin{pmatrix}
 1 &  1 &  0 &  0 & 0 \\
 0 & -1 &  0 &  0 & 0 \\
 0 &  0 & -1 &  0 & 0 \\
 0 & -1 &  0 &  1 & 0 \\
 0 &  0 &  2 &  0 & 1
\end{pmatrix},
\quad
\bp \mapsto  B^\psi := \begin{pmatrix}
 0 &  0 &  1 & -1 & 0 \\
 0 &  1 & -2 &  0 & 0 \\
 0 &  0 & -1 &  0 & 0 \\
-1 &  0 & -1 &  0 & 0 \\
-1 &  0 &  2 & -1 & 1
\end{pmatrix},
\]
\[
\app \mapsto A^{\psi_2} := \begin{pmatrix}
-1 &  0 &  0 &  0 & 0 \\
 0 & -1 &  0 &  0 & 0 \\
 0 & -1 &  1 &  0 & 0 \\
 0 & -1 &  0 &  1 & 0 \\
 0 &  1 &  0 &  0 & 1
\end{pmatrix},
\quad
\bpp \mapsto B^{\psi_2}:= \begin{pmatrix}
 1 &  0 &  0 &  0 & 0 \\
-2 & -1 &  0 &  0 & 0 \\
-1 &  0 &  0 & -1 & 0 \\
-1 &  0 & -1 &  0 & 0 \\
 1 &  1 &  0 &  0 & 1
\end{pmatrix}.
\]
Each of these matrices is an involution, $A$ commutes with $B$, $A^\psi$ with
$B^\psi$ and $A^{\psi_2}$ with $B^{\psi_2}$, and, for $M \in \{A, B, AB\}$, the
three corresponding matrices $M, M^\psi, M^{\psi_2}$ commute pairwise. These are
the defining relations of $\chi_3(H)$, since $H = \{1, a, b, ab\}$, and checking
them is a direct computation (which can also be carried out in
GAP~\cite{gap}). By von Dyck's theorem the assignment extends to a homomorphism
$\rho$.

All six matrices have last column $(0,0,0,0,1)^{\top}$, so $\rho(\chi_3(H))$
lies in the group of integer matrices
$\left(\begin{smallmatrix} M & \mathbf{0} \\ w^{\top} & 1 \end{smallmatrix}\right)$,
in which
\[
    T := \Bigl\{\, T(v) := \begin{pmatrix} I_4 & \mathbf{0} \\ v^{\top} & 1 \end{pmatrix}
    : v \in \mathbb{Z}^4 \Bigr\}
\]
is a normal subgroup, isomorphic to $\mathbb{Z}^4$ via $T(v)\,T(w) = T(v + w)$.
A direct computation gives $\rho(k_s) = T(v^{(s)})$, $s = 1, \ldots, 4$, with
\[
\begin{pmatrix} v^{(1)} \\ v^{(2)} \\ v^{(3)} \\ v^{(4)} \end{pmatrix}
= \begin{pmatrix}
4 & 2 & 0 & 0 \\
2 & 4 & -2 & -2 \\
0 & -2 & 4 & 0 \\
0 & -2 & 0 & 4
\end{pmatrix},
\qquad \det = 64 \neq 0 .
\]
Since $T$ is normal, $\rho(K)$, the normal closure of
$\rho(k_1), \ldots, \rho(k_4)$, lies in $T$; and it contains the subgroup of
rank~$4$ spanned by $v^{(1)}, \ldots, v^{(4)}$. Hence $\rho(K) \cong \mathbb{Z}^4$.
Therefore $K$, and so $\chi_3(H)$, is infinite, and $K$ maps onto $\mathbb{Z}^4$.

\end{proof}
\section{The Hybrid Operator $\chi_4 - \chi^2$}\label{sec:hybrid}

Both the iterated group $\chi^{2}(H)$ and the mother group $\chi_{4}(H)$ are generated by four isomorphic copies of $H$, but by different sets of defining relations. In this section we compare the two constructions. We determine their largest common quotient, which we denote by $Q(H)$ and call the \emph{hybrid} of $\chi_{4}$ and $\chi^{2}$.

\subsection{The four copies of $H$ inside $\chi^{2}(H)$}

Write $K=\chi(H)=\langle H, H^{\psi}\rangle$. Applying the operator $\chi$ once more, let $\psi': K \to K^{\psi'}$ denote the isomorphism onto the second copy of $K$, so that
\begin{equation*}
\chi^{2}(H)=\chi(K)=\bigl\langle\, K, K^{\psi'} \;\bigm|\; [x,x^{\psi'}]=1 \ \text{for all} \ x \in K \,\bigr\rangle .
\end{equation*}
Then $\chi^{2}(H)$ is generated by the four subgroups
\begin{equation*}
H, \qquad H^{\psi}, \qquad H^{\psi'}, \qquad H^{\psi\psi'},
\end{equation*}
each isomorphic to $H$. Throughout this section we match them with the four copies of $\chi_{4}(H)$ by
\begin{equation*}
H^{\psi_{0}} \leftrightarrow H, \qquad
H^{\psi_{1}} \leftrightarrow H^{\psi}, \qquad
H^{\psi_{2}} \leftrightarrow H^{\psi'}, \qquad
H^{\psi_{3}} \leftrightarrow H^{\psi\psi'}.
\end{equation*}

\begin{lemma}\label{lem:hyb_copias}
In $\chi^{2}(H)$ the following relations hold for every $h \in H$:
\begin{equation*}
[h,h^{\psi}]=1, \qquad [h^{\psi'},h^{\psi\psi'}]=1, \qquad [h,h^{\psi'}]=1, \qquad [h^{\psi},h^{\psi\psi'}]=1 .
\end{equation*}
\end{lemma}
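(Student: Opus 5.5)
The four relations come directly from the two defining presentations of $\chi^{2}(H)=\chi(K)$, with $K=\chi(H)$. I would take them one at a time.

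\emph{First relation.} $[h,h^{\psi}]=1$ is a defining relation of $K=\chi(H)$. By Proposition~\ref{prop:iter-embed}, the natural map $K\to\chi(K)$ is a monomorphism. So the relation holds in $\chi^{2}(H)$, with $H$ and $H^{\psi}$ sitting inside the first copy $K$.

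\emph{Second relation.} $\psi'\colon K\to K^{\psi'}$ is an isomorphism, and $K^{\psi'}$ is also embedded in $\chi(K)$: the swap symmetry, or the analogue of $\rho_K$ killing $K$, gives this. Applying $\psi'$ to $[h,h^{\psi}]=1$ gives $[h^{\psi'},(h^{\psi})^{\psi'}]=1$, which is $[h^{\psi'},h^{\psi\psi'}]=1$. Here $h^{\psi\psi'}$ means $(h^{\psi})^{\psi'}$, in keeping with the paper's right-action notation.

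\emph{Third and fourth relations.} These are instances of the weak commutativity relators of $\chi(K)$, namely $[x,x^{\psi'}]=1$ for all $x\in K$. Take $x=h\in H\leq K$ to get $[h,h^{\psi'}]=1$. Take $x=h^{\psi}\in H^{\psi}\leq K$ to get $[h^{\psi},(h^{\psi})^{\psi'}]=[h^{\psi},h^{\psi\psi'}]=1$.

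No step is a real obstacle. The only point needing care is notation: $H^{\psi'}$ and $H^{\psi\psi'}$ must be read as the images of $H,H^{\psi}\leq K$ under $\psi'$. The relations do not even need the embeddings, since homomorphic images of relations hold automatically. I would also remark that the two remaining pairs, $(H,H^{\psi\psi'})$ and $(H^{\psi},H^{\psi'})$, get no such relation, because the only relators of $\chi(K)$ pair $x$ with $x^{\psi'}$. This is the source of the discrepancy with $\chi_4(H)$ addressed in the rest of the section.
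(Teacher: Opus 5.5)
Your proposal is correct and follows essentially the paper's own proof: the first relation is a defining relation of $K=\chi(H)$, the second is its image under $\psi'$, and the third and fourth are the relators $[x,x^{\psi'}]=1$ of $\chi(K)$ with $x=h$ and $x=h^{\psi}$. As you note yourself, the embedding remarks are not needed, since a relation holding in $K$ automatically holds in its image in $\chi(K)$.
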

\begin{proof}
The first is a defining relation of $K=\chi(H)$ and the second is its image under $\psi'$. The third and the fourth are the defining relation $[x,x^{\psi'}]=1$ of $\chi(K)$ applied to $x=h$ and to $x=h^{\psi}$ respectively.
\end{proof}

Thus, of the six relations $[h^{\psi_{i}},h^{\psi_{j}}]=1$ that define $\chi_{4}(H)$, exactly four hold in $\chi^{2}(H)$ by construction. The two remaining ones, corresponding to the pairs $\{0,3\}$ and $\{1,2\}$, are
\begin{equation*}
[h,h^{\psi\psi'}]=1 \qquad \text{and} \qquad [h^{\psi},h^{\psi'}]=1,
\end{equation*}
and they need not hold. Conversely, $\chi^{2}(H)$ satisfies the relations $[x,x^{\psi'}]=1$ for every $x \in K$, and these are not relations of $\chi_{4}(H)$. Hence neither of the two defining sets of relations contains the other.

\subsection{A reduced hybrid group} \label{ssec:hybrid}

\begin{definition}\label{def:hyb_Q}
Let $N_{0}$ be the normal closure in $\chi^{2}(H)$ of the set
\begin{equation*}
\bigl\{\, [h,h^{\psi\psi'}], \ [h^{\psi},h^{\psi'}] \;:\; h \in H \,\bigr\},
\end{equation*}
and put $Q(H)=\chi^{2}(H)/N_{0}$.
\end{definition}

That is, $Q(H)$ is the quotient of $\chi^{2}(H)$ by the relations of $\chi_{4}(H)$ which are missing from it.

\begin{lemma}\label{lem:hyb_ident}
 The following identity holds in $\chi^{2}(H)$ for all $h,k \in H$.
\begin{equation*}
[h,k^{\psi\psi'}]^{\,k^{\psi}} \cdot [k^{\psi},h^{\psi'}]^{\,k^{\psi\psi'}} = 1 .
\end{equation*}
\end{lemma}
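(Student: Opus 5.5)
The plan is to obtain the identity as a single instance of the defining relation of $\chi^{2}(H)=\chi(K)$, where $K=\chi(H)$. That relation is $[x,x^{\psi'}]=1$ for \emph{every} $x\in K$, not only for $x\in H\cup H^{\psi}$. I would apply it to the mixed element $x=h\,k^{\psi}\in K$, with $h,k\in H$. Since $\psi'$ is an isomorphism on $K$, we have $x^{\psi'}=h^{\psi'}k^{\psi\psi'}$, and so
\[
1=[h\,k^{\psi},\,h^{\psi'}k^{\psi\psi'}] .
\]

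Next I would expand this with the standard identities $[uv,z]=[u,z]^{v}[v,z]$ and $[u,zw]=[u,w][u,z]^{w}$. Write $u=h$, $v=k^{\psi}$, $z=h^{\psi'}$, $w=k^{\psi\psi'}$. The expansion gives
\[
[uv,zw]=\bigl([u,w]\,[u,z]^{w}\bigr)^{v}\,[v,w]\,[v,z]^{w}.
\]

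Two of the four factors vanish by Lemma~\ref{lem:hyb_copias}. First, $[u,z]=[h,h^{\psi'}]=1$. Second, $[v,w]=[k^{\psi},k^{\psi\psi'}]=1$, which is the fourth relation of that lemma applied to $k$. What remains is
\[
[h,k^{\psi\psi'}]^{\,k^{\psi}}\,[k^{\psi},h^{\psi'}]^{\,k^{\psi\psi'}}=1,
\]
which is exactly the claimed identity.

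There is no real obstacle. The only points needing care are choosing the mixed element $hk^{\psi}$ rather than a pure copy, and keeping the commutator-expansion conventions consistent with the paper's left-normed, right-action conventions. With those conventions the conjugating exponents come out as $k^{\psi}$ and $k^{\psi\psi'}$, as in the statement.
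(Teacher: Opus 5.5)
Your proposal is correct and follows the paper's proof: you apply the defining relation $[x,x^{\psi'}]=1$ of $\chi(K)$ to $x=h\,k^{\psi}$, expand, and kill the factors $[h,h^{\psi'}]$ and $[k^{\psi},k^{\psi\psi'}]$ using Lemma~\ref{lem:hyb_copias}. Your commutator expansion matches the paper's conventions and yields exactly the stated identity.
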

\begin{proof}
Apply the defining relation $[x,x^{\psi'}]=1$ of $\chi(K)$ to the element $x=h\,k^{\psi}$ of $K$, so that $x^{\psi'}=h^{\psi'}k^{\psi\psi'}$, and expand:
\begin{equation*}
[h k^{\psi},\, h^{\psi'}k^{\psi\psi'}] = [h,\, h^{\psi'}k^{\psi\psi'}]^{\,k^{\psi}} \; [k^{\psi},\, h^{\psi'}k^{\psi\psi'}] .
\end{equation*}
By Lemma~\ref{lem:hyb_copias} we have $[h,h^{\psi'}]=1$ and $[k^{\psi},k^{\psi\psi'}]=1$, whence $[h,h^{\psi'}k^{\psi\psi'}]=[h,k^{\psi\psi'}]$ and $[k^{\psi},h^{\psi'}k^{\psi\psi'}]=[k^{\psi},h^{\psi'}]^{\,k^{\psi\psi'}}$.
\end{proof}

Taking $h=k$ in Lemma~\ref{lem:hyb_ident} shows that the two missing relations determine one another: killing either one of them kills the other. For $h \neq k$ the identity relates $D_{03}$ to $D_{12}$.

Let $A=\langle H^{\psi_{0}}, H^{\psi_{1}}\rangle$ and $B=\langle H^{\psi_{2}}, H^{\psi_{3}}\rangle$ be the two halves of $\chi_{4}(H)$, and let $\tau: A \to B$ be the isomorphism determined by $h^{\psi_{0}} \mapsto h^{\psi_{2}}$ and $h^{\psi_{1}} \mapsto h^{\psi_{3}}$. It exists because it is the restriction of the automorphism $\alpha_{\sigma}$ of $\chi_{4}(H)$ associated with the permutation $\sigma=(0\;2)(1\;3)$ of the copies: the argument of Lemma~\ref{lem:sym_aut} applies verbatim to $\chi_{n}(H)$, whose defining relations involve only weak commutativity.

\begin{proposition}\label{prop:hyb_Q}
The assignment $h^{\psi_{0}} \mapsto h$, $h^{\psi_{1}} \mapsto h^{\psi}$, $h^{\psi_{2}} \mapsto h^{\psi'}$, $h^{\psi_{3}} \mapsto h^{\psi\psi'}$ extends to an epimorphism $\pi: \chi_{4}(H) \twoheadrightarrow Q(H)$, and
\begin{equation*}
\ker \pi = \bigl\langle\!\bigl\langle \; [x,x^{\tau}] \;:\; x \in A \; \bigr\rangle\!\bigr\rangle .
\end{equation*}
Moreover $Q(H)$ is the largest common quotient of $\chi^{2}(H)$ and $\chi_{4}(H)$ compatible with the above matching of the copies.
\end{proposition}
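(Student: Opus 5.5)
To build $\pi$ I will use von Dyck's theorem: it suffices to check that the images of the weak commutativity relators of $\chi_4(H)$ vanish in $Q(H)$. For the pairs $\{0,1\},\{2,3\},\{0,2\},\{1,3\}$ this is Lemma~\ref{lem:hyb_copias}. For the two pairs $\{0,3\},\{1,2\}$ the relators are exactly the generators of $N_0$, so they die in $Q(H)$. The four copies generate $\chi^2(H)$, so their images generate $Q(H)$, and $\pi$ is onto.

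To identify $\ker\pi$, put $M:=\langle\!\langle [x,x^{\tau}] : x\in A\rangle\!\rangle$ and construct an inverse map $\theta\colon Q(H)\to\chi_4(H)/M$. First I define $\theta$ on $K=\chi(H)$. The assignment $h\mapsto h^{\psi_0}$, $h^{\psi}\mapsto h^{\psi_1}$ respects $[h,h^{\psi}]=1$, so it gives a homomorphism $\beta\colon K\to A\le\chi_4(H)$. The map $\tau\beta$ then sends $h\mapsto h^{\psi_2}$ and $h^{\psi}\mapsto h^{\psi_3}$, and I define $\theta$ on $K^{\psi'}$ by $x^{\psi'}\mapsto (x^{\beta})^{\tau}$.

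The defining relators of $\chi(K)$ are $[x,x^{\psi'}]$. Their images are $[x^\beta,(x^\beta)^\tau]$, which lie in $M$ because $x^\beta\in A$. So $\theta$ is a homomorphism $\chi^2(H)\to\chi_4(H)/M$. The generators of $N_0$ map to $[h^{\psi_0},h^{\psi_3}]$ and $[h^{\psi_1},h^{\psi_2}]$, which are defining relators of $\chi_4(H)$ and hence trivial. Therefore $\theta$ factors through $Q(H)$.

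On generators both composites $\theta\pi$ (modulo $M$) and $\pi\theta$ are the identity. This needs two checks:
\begin{itemize}
\item $M\subseteq\ker\pi$. For $x\in A$, write $x=\beta(y)$; this is possible because $\beta$ is onto $A$. Then $\pi(x)$ is the image of $y$ and $\pi(x^{\tau})$ is the image of $y^{\psi'}$, so $\pi([x,x^{\tau}])$ is the image of $[y,y^{\psi'}]=1$.
\item $\pi\circ\tau\beta$ agrees with $\psi'$ on $K$. Both are homomorphisms, and they agree on the generators $h$ and $h^{\psi}$.
\end{itemize}
Together these give $\ker\pi=M$.

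For the maximality statement, let $G$ be generated by four copies of $H$ such that $G$ is a quotient of both $\chi^2(H)$ and $\chi_4(H)$, compatibly with the matching. The quotient map $\chi^2(H)\to G$ must kill the relators $[h,h^{\psi\psi'}]$ and $[h^{\psi},h^{\psi'}]$, since they are images of $\chi_4$-relators. Hence it kills $N_0$ and factors through $Q(H)$. Conversely, $Q(H)$ is itself a common quotient, by the first paragraph.

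The one delicate point is the well-definedness of $\theta$ on $K^{\psi'}$. Because $\beta$ need not be injective (only $\beta$ and $\tau$ are available, not an isomorphism $K\cong A$), I define $\theta$ via $\tau\beta$ rather than via $\tau$ applied to a preimage. With that choice no issue arises: $x^{\psi'}\mapsto(x^\beta)^\tau$ is a composite of homomorphisms and hence well defined.
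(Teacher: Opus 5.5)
Your proof is correct, but it is more hands-on than the paper's. The paper treats $\chi_4(H)$, $\chi^2(H)$ and $Q(H)$ as quotients of the free product of the four copies of $H$, by the normal closures of $R_4$, $R_{\chi^2}$ and $R_{\chi^2}\cup R_4$ respectively. With that set-up the largest-common-quotient claim is immediate, and the third isomorphism theorem gives $\ker\pi$ as the normal closure of the image of $R_{\chi^2}$ in $\chi_4(H)$.

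You instead build $\pi$ by von Dyck and construct an explicit inverse $\theta\colon Q(H)\to\chi_4(H)/M$ through $\beta$ and $\tau\beta$. The paper's argument is shorter, but it compresses one step into the phrase ``that is''. That step identifies the image of $R_{\chi^2}$ in $\chi_4(H)$, up to relators already trivial there, with the set $\{[x,x^{\tau}] : x\in A\}$. Justifying it requires lifting each $x\in K$ to a word in the first two copies and noting that $\tau$ is induced by the copy shift. Your maps $\beta$ and $\tau\beta$, together with the check $M\subseteq\ker\pi$, carry out exactly that verification explicitly.

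Two small remarks. First, your caution about $\beta$ is unnecessary: by Proposition~\ref{prop:embed_mother} (see Corollary~\ref{cor:embed}), $\beta$ is an isomorphism $K\cong A$. Your proof rightly does not depend on this, so nothing is lost. Second, in your second bullet, ``agrees with $\psi'$'' should read ``agrees with $\psi'$ followed by the projection $\chi^2(H)\to Q(H)$'', since the two maps have different codomains as written.
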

\begin{proof}
By Lemma~\ref{lem:hyb_copias} four of the six defining relations of $\chi_{4}(H)$ hold in $\chi^{2}(H)$, and the two remaining ones hold in $Q(H)$ by Definition~\ref{def:hyb_Q}; hence $\pi$ is well defined, and it is surjective because the four copies generate. Both groups are quotients of the free product of the four copies of $H$; writing $R_{4}$ for the six families of weak commutativity relations and $R_{\chi^{2}}$ for the relations of $\chi^{2}(H)$, the above shows that $Q(H)$ is the quotient by the normal closure of $R_{\chi^{2}} \cup R_{4}$, which is the largest common quotient. Since the relations in $R_{4}$ already hold in $\chi_{4}(H)$, the kernel of $\pi$ is the normal closure of the image of $R_{\chi^{2}}$, that is, of the elements $[x,x^{\tau}]$ with $x \in A$.
\end{proof}

In other words, $Q(H)$ is the largest quotient of $\chi_{4}(H)$ in which the two halves $A$ and $B$ commute weakly with each other, and the kernel of $\pi$ is precisely the corresponding set of weak commutativity relations one level up.

\subsection{Diagonal representations and co-nilpotent subgroups}
The Diagonal representation of the mother group $\chi_n(H)$ introduced in Subsection~\ref{ssec:diag} suggests the following approach: consider a family $\mathcal{F}$ of groups $H$, and a larger group $G = G(\mathcal{F}, H, n)$ generated by $n$ isomorphic copies of $H$ that commute weakly with one another; that is, $G$ is a homomorphic image of $\chi_n(H)$, with $H \in \mathcal{F}$, where the $n$ copies of $H$ are kept intact. Given a positive integer $n$, the direct product $\mathbf{X}_m(H)$, with $m \geq n$, contains subgroups generated by $n$ copies of $H$, each defined by partial diagonals $(h_i)$, where for each $i$, $h_i = e$ or $h$ (see also Appendix~\ref{app:hamming} for details).
\begin{definition} \label{def:H-rank}
    A subgroup $K$ of a group $P$ is said to be \textit{co-nilpotent} provided the quotient of $P$ by the core $K_P$ is nilpotent.
\end{definition}
Let $\delta: G \to \mathbf{X}_m(H)$ be a diagonal representation of $G$ where its image is co-nilpotent in the direct product. With $n$ and $\mathcal{F}$ fixed, let $\mathbf{M}(n, \mathcal{F})$ be the largest $m$ which holds for all $H$ in $\mathcal{F}$.

\subsubsection{\bf Examples} 
    \begin{enumerate}
        \item As seen in Subsection~\ref{ssec:diag}, the representation $\delta_n$ of the mother group $\chi_n(H)$ gives that $\mathbf{M}(n, \mathcal{F}) = 2^n-1$ for any family $\mathcal{F}$; 
        \item The representation $\lambda_{\tilde{H}}: \chi(n,H) \to T_n({\tilde{H}})$ in Proposition~\ref{prop:epichintn} shows that $\mathbf{M}(n, \mathcal{F}) = n+1$ for these groups;
        \item The hybrid group $Q(H)$. The fact that both $H$ and $H^{\psi^{\prime}}$ embed in $Q(H)$ implies that the diagonal representation $Q(H) \to \mathbf{X}_6(H)$ induced from that of $\chi_4(H)$ is well-defined. Consequently, we get for $Q(H)$, $6 \leq \mathbf{M}(4,\mathcal{F}).$ Furthermore, applying $\delta_2$ twice, $\chi^2(H)=\chi(\chi(H)) \to \mathbf{X}_3(\chi(H)) \to \mathbf{X}_9(H)$, we obtain a diagonal representation of $\chi^2(H)$ with image $T_2(T_2(H))$. Since $T_2(T_2(H)) \geq \mathbf{X}_3\bigl(T_2(H)'\bigr) = \mathbf{X}_9(H')$, this image is co-nilpotent in $\mathbf{X}_9(H)$, and it follows that $\mathbf{M}(4,\mathcal{F}) \leq 9.$
    \end{enumerate}
We conclude this work with the following question:
\subsubsection{\bf Question} Does there exist a family $\mathcal{F}$ of finite groups for which $\mathbf{M}(n, \mathcal{F})$ has exponential growth with respect to $n$?

\appendix
\section{Weak commuting groups and Hamming codes}\label{app:hamming}
\begin{flushright}\footnotesize by Dmytro Savchuk\footnote{University of South Florida, 4202 E Fowler Ave, Tampa, FL 33620-5700, USA. \texttt{savchuk@usf.edu}. Partially supported by NSF RTG grant DMS-2342254.}\end{flushright}

\providecommand{\Z}{\mathbb{Z}}
\providecommand{\be}{\mathbf{e}}

For each group $H$ we consider $n$ isomorphic copies $H_1=H,H_2,\ldots, H_n$ of $H$. For each choice of isomorphisms $\psi_i\colon H\to H_i$ with $\psi_1$ being the identity isomorphism, we define a group
\[\chi_n(H)=\langle H_i,i=1,\ldots,n\mid [h,\psi_i(h)], h\in H, i=1,\ldots,n\rangle.\]
For $h\in H$ we will denote $\psi_i(h)$ simply by $h_i$ where it does not lead to any confusion.

The group $\chi_2(H)$ was defined in~\cite{sidki-wp} and also studied in~\cite{KS}. In particular, it has been shown that the homomorphism
\[\rho_2\colon \chi_2(H)\to H\times H\times H\]
induced by
\[\begin{array}{lcl}
h_1&\mapsto&(h,h,1)\\
h_2&\mapsto&(1,h,h)
\end{array}
\]
satisfies $H'\times H'\times H'<\rho_2(\chi_2(H))$. Clearly, in the case when $H$ is perfect, $\rho_2$ is surjective. The following proposition extends the above result to $\chi_n(H)$ using the parity check matrix of the Hamming code~\cite{hamming1950error} (see, for example,~\cite{TrappeWashington2020} for a gentle introduction to coding theory containing the discussion of Hamming codes and associated matrices).

\begin{theorem}
There is a homomorphism $\rho_n\colon\chi_n(H)\to H^{2^n-1}$ whose image contains $\gamma_n(H)^{2^n-1}$.
\end{theorem}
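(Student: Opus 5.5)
The homomorphism comes from the Hamming code. Index the coordinates of $H^{2^n-1}$ by the nonzero vectors $v\in\mathbb{F}_2^n$; these are the columns of the parity check matrix $M_n$, so for $n=3$ we recover $M_3$. Send $h_i$ to the element whose $v$-coordinate is $h$ if $v_i=1$ and $1$ otherwise. That is, $h_i$ is the partial diagonal supported on $S_i=\{v: v_i=1\}$. For $i\neq j$, the images of $h_i$ and $h_j$ are both $h$ on $S_i\cap S_j$, and on the rest of their supports they lie in disjoint coordinates. Hence they commute, the weak commutativity relations hold, and von Dyck's theorem gives $\rho_n$.

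To show that the image $T=\rho_n(\chi_n(H))$ contains $\gamma_n(H)^{2^n-1}$, we work one coordinate at a time. It suffices to show that for each nonzero $v$ and all $g_1,\dots,g_n\in H$, the element with $[g_1,\dots,g_n]$ at coordinate $v$ and $1$ elsewhere lies in $T$. Such elements generate $\gamma_n(H)$ placed in coordinate $v$, and then the product over all $v$ gives everything.

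Fix $v$. For each index $i$ choose a copy $c(i)$ and possibly take a product of generator images, so as to obtain elements $x_i\in T$ with two properties: $x_i$ has entry $g_i$ at coordinate $v$, and for every $w\neq v$ some $x_i$ has trivial entry at $w$. Then the left-normed commutator $[x_1,\dots,x_n]$ is computed coordinatewise. It equals $[g_1,\dots,g_n]$ at $v$ and $1$ wherever some entry is trivial, so it is the desired element.

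The construction of the $x_i$ uses the linear algebra of $\mathbb{F}_2^n$. Take $n$ affine hyperplanes through $v$ in the set of nonzero vectors whose intersection is exactly $\{v\}$. For a basis coordinate $i$ with $v_i=1$, use $S_i$ directly. For $v_i=0$, use a support like $S_i\,\triangle\, S_j$ with $v_j=1$, realized by the element $(g_i)_i^{-1}(g_i)_j$. This element has entry $g_i$ on $S_j\setminus S_i$, entry $g_i^{-1}$ on $S_i\setminus S_j$, and $1$ on $S_i\cap S_j$. Adjusting signs and order handles the inverses.

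The step I expect to be the main obstacle is checking that the supports can be chosen so that every $w\neq v$ is killed by at least one factor. Where necessary, I would replace the single-copy elements $h_i$ by such "difference" elements so that the condition becomes $\bigcap_i \operatorname{supp}(x_i)=\{v\}$.

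Concretely, choose linearly independent functionals $f_1,\dots,f_n$ on $\mathbb{F}_2^n$ with $f_k(v)=1$ for all $k$; this is possible since $v\neq 0$. Then $\{w: f_k(w)=1\ \forall k\}=\{v\}$. Each set $\{f=1\}$ is the support of a product of generator images of the form $(g)_{j_1}(g^{\pm1})_{j_2}\cdots$ over the indices $j$ occurring in $f$, with entries in $\{g,g^{-1}\}$ on that set. Since $\gamma_n$ is generated by such commutators with arbitrary signs, this suffices.
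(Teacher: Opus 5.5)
Your argument is correct, apart from one overstatement in the last paragraph, and it takes a genuinely different route from the paper. The paper inducts on $n$ using the block recursion of $M_{n+1}$. An element of $\chi_n(H)\le\chi_{n+1}(H)$ whose $\rho_n$-image is $h\in\gamma_n(H)$ at coordinate $i$ (and trivial elsewhere) has $\rho_{n+1}$-image $(\mathbf{e}^{(n)}_{h,i},1,\mathbf{e}^{(n)}_{h,i})$. Commuting it with $g_{n+1}$, whose image is trivial on the first block, isolates $[h,g]$ in the last block. The first block is then recovered by division, and the middle coordinate from $([h,g])_{n+1}$ times inverses. You instead work directly at each coordinate $v$. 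You choose $n$ elements of the image whose supports are affine hyperplanes of $\mathbb{F}_2^n$ meeting exactly in $\{v\}$, and take a single $n$-fold commutator computed coordinatewise. This avoids both the induction and the embedding $\chi_n(H)\hookrightarrow\chi_{n+1}(H)$. It also shows transparently why $\gamma_n$ appears: it takes $n$ independent hyperplanes to cut out a point. The paper's version stays closer to the recursive structure of the Hamming matrix and builds the commutators one layer at a time.

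The overstatement is that an arbitrary independent functional $f$ with $f(v)=1$ will not do. If $f$ has three or more nonzero coefficients, no product $\prod_{j}\rho_n\bigl((g^{\epsilon_j})_j\bigr)$ over the indices of $f$ is supported in $\{f=1\}$. For example, take $n=3$ and $f=e_1^*+e_2^*+e_3^*$. The entries at $(1,1,0)$, $(1,0,1)$, $(0,1,1)$ are $g^{\epsilon_1+\epsilon_2}$, $g^{\epsilon_1+\epsilon_3}$, $g^{\epsilon_2+\epsilon_3}$. With $\epsilon_j=\pm1$, two signs agree, so one of these entries is $g^{\pm2}$, which is nontrivial unless $g^2=1$. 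Arbitrary integer exponents fare no better when $g$ has odd or infinite order.

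So you should use exactly the functionals from your earlier paragraph: $e_i^*$ for $v_i=1$, and $e_i^*+e_{j_0}^*$ for $v_i=0$, with one fixed $j_0$ satisfying $v_{j_0}=1$. These are linearly independent and all equal $1$ at $v$. They are realized by $\rho_n\bigl((g_i)_i\bigr)$ and $\rho_n\bigl((g_i)_i^{-1}(g_i)_{j_0}\bigr)$, whose $v$-entry is exactly $g_i$, so no sign adjustment is needed. With this choice the proof is complete.
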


\begin{proof}
The parity check matrix for the binary Hamming code of length $2^n-1$ is the $n\times (2^{n}-1)$ matrix whose columns are all nonzero vectors in $\Z_2^n$. More precisely, we define $M_1=[1]$ and
\[M_{n+1}=\left[
\begin{array}{ccc|c|ccc}
\multicolumn{3}{c|}{\multirow{3}{*}{$M_n$}}&0&\multicolumn{3}{c}{\multirow{3}{*}{$M_n$}}\\
\multicolumn{3}{c|}{}&\vdots&\multicolumn{3}{c}{}\\
\multicolumn{3}{c|}{}&0&\multicolumn{3}{c}{}\\ \hline
0&\cdots&0&1&1&\cdots&1
\end{array}\right]
\]

For each $h\in H$ we will define $\rho_n(h_i)$ to be a $(2^n-1)$-tuple from $H^{2^n-1}$ that is obtained from the $i$-th row of $M_n$ by replacing $1$'s with $h$ and $0$'s with $1\in H$. It is straightforward to verify that $\rho_n$ extends to a homomorphism from $\chi_n(H)$ to $H^{2^n-1}$.

Now we will prove the claim by induction on $n$. For $n=1$ the statement is obvious. Assume that the statement is true for some $n\geq1$. To simplify calculations below for each $h\in H$, $n\geq 1$, and $1\leq i\leq 2^n-1$ we denote by $\be_{h,i}^{(n)}$ the element of $H^{2^n-1}$ whose all coordinates except the $i$-th one are trivial, and the $i$-th coordinate is equal to $h$. I.e.,
\[\be_{h,i}^{(n)}=(\underbrace{1,1,\ldots,1,h,1,\ldots,1}_{2^n-1}).\]
Then by induction assumption for each $h\in\gamma_n(H)$ there is $k^{(n)}_{h,i}\in\chi_{n}(H)$ such that $\rho_n\bigl(k^{(n)}_{h,i}\bigr)=\be_{h,i}^{(n)}$.
Since $\chi_n(H)$ is naturally contained in $\chi_{n+1}(H)$, we can view $k^{(n)}_{h,i}$ also as an element of $\chi_{n+1}(H)$. By the inductive definition of matrix $M_{n+1}$ we have:
\begin{equation}
\rho_{n+1}\bigl(k^{(n)}_{h,i}\bigr)=\bigl(\be_{h,i}^{(n)},1,\be_{h,i}^{(n)}\bigr).
\end{equation}
For each $g\in H$ the image of $g_{n+1}\in\chi_{n+1}(H)$ under $\rho_{n+1}$, according to the last row of $M_{n+1}$, is
\begin{equation}
\rho_{n+1}(g_{n+1})=(\underbrace{1,\ldots,1}_{2^n-1},g,\underbrace{g,\ldots,g}_{2^n-1}).
\end{equation}
Therefore, for $k^{(n+1)}_{[h,g],2^n+i}:=[k^{(n)}_{h,i},g_{n+1}]\in\chi_{n+1}(H)$ we get
\begin{equation}
\rho_{n+1}\bigl(k^{(n+1)}_{[h,g],2^n+i}\bigr)=\bigl(\underbrace{1,\ldots,1}_{2^n-1},1,\be_{[h,g],i}^{(n)}\bigr)=\be_{[h,g],2^n+i}^{(n+1)}.
\end{equation}
Since the choices of $0\leq i\leq 2^n-1$, $h\in\gamma_n(H)$ and $g\in H$ were arbitrary, we obtain that
\begin{equation}
\label{eq:app-last}
\underbrace{1\times\cdots\times1}_{2^n}\times\underbrace{\gamma_{n+1}(H)\times\cdots\times\gamma_{n+1}(H)}_{2^n-1}<\rho_{n+1}(\chi_{n+1}(H)).
\end{equation}

To deal with the first $2^n-1$ coordinates, we consider for each $0\leq i\leq 2^n-1$ the element $k^{(n+1)}_{[h,g],i}:=k^{(n)}_{[h,g],i}\cdot \bigl(k^{(n+1)}_{[h,g],2^n+i}\bigr)^{-1}$ for which we have
\begin{equation}
\label{eq:app-first}
\rho_{n+1}\bigl(k^{(n+1)}_{[h,g],i}\bigr)=\bigl(\be_{[h,g],i}^{(n)},1,\underbrace{1,\ldots,1}_{2^n-1}\bigr)=\be_{[h,g],i}^{(n+1)}.
\end{equation}

Finally, for the $2^n$-th coordinate, we consider
\[k^{(n+1)}_{[h,g],2^n}=[h,g]_{2^n}\cdot\prod_{i=1}^{2^n-1}\bigl(k^{(n+1)}_{[h,g],2^n+i}\bigr)^{-1}\]
for which we have
\begin{multline}
\label{eq:app-middle}
\rho_{n+1}(l_{[h,g],2^n})=(\underbrace{1,\ldots,1}_{2^n-1},[h,g],\underbrace{[h,g],\ldots,[h,g]}_{2^n-1})\cdot(\underbrace{1,\ldots,1}_{2^n-1},1,\underbrace{[h,g]^{-1},\ldots,[h,g]^{-1}}_{2^n-1})\\
=(\underbrace{1,\ldots,1}_{2^n-1},[h,g],\underbrace{1,\ldots,1}_{2^n-1})=\be_{[h,g],2^n}^{(n+1)}.
\end{multline}
Equalities~\eqref{eq:app-last},\eqref{eq:app-middle}, and~\eqref{eq:app-first} complete the proof.
\end{proof}

\begin{corollary}
If $H$ is perfect then $\rho_n\colon\chi_n(H)\to H^{2^n-1}$ is surjective.
\end{corollary}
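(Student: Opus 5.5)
The plan is to read off surjectivity directly from the preceding theorem. That theorem gives a homomorphism $\rho_n\colon\chi_n(H)\to H^{2^n-1}$ whose image contains $\gamma_n(H)^{2^n-1}$. So it suffices to show that $\gamma_n(H)=H$ whenever $H$ is perfect.

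This is a short induction on $n$. Since $\gamma_1(H)=H$, we have $\gamma_2(H)=[H,H]=H'=H$ by perfectness. Now suppose $\gamma_k(H)=H$. Then
\[
\gamma_{k+1}(H)=[\gamma_k(H),H]=[H,H]=H .
\]
Hence $\gamma_n(H)=H$ for every $n\geq 1$.

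Consequently $H^{2^n-1}=\gamma_n(H)^{2^n-1}\leq\rho_n(\chi_n(H))\leq H^{2^n-1}$, so $\rho_n$ is onto.

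There is no genuine obstacle here. The only point to watch is that the theorem is being invoked exactly as stated, with containment of the full direct power $\gamma_n(H)^{2^n-1}$ and with no restriction on $n$. The degenerate case $n=1$ is trivial anyway, since $\rho_1$ is the identity map $H\to H$.
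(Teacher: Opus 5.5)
Your proposal is correct and is exactly the intended argument. The paper states the corollary without proof as an immediate consequence of the theorem, since perfectness gives $\gamma_n(H)=H$ and hence $\rho_n(\chi_n(H))\geq H^{2^n-1}$.
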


\end{document}